\newtheorem{thm}{Theorem}%[section]
\newtheorem{lem}[thm]{Lemma}
\newtheorem{prop}[thm]{Proposition}
\newtheorem{conj}[thm]{Conjecture}
\theoremstyle{mydefinition}
\newtheorem{rema}[thm]{Remark}
\theoremstyle{myremark}
\def\pa[1]{\frac{\partial}{\partial x}}
\newcommand{\qfac}[1]{(q)_n}
\title{Hankel determinants for convolution powers of Catalan numbers}
\author{Ying Wang$^1$ and Guoce Xin$^{2,*}$}
\address{ $^{1,2}$School of Mathematical Sciences, Capital Normal University,
 Beijing 100048, PR China}
\email{$^1$\texttt{wangying.cnu@gmail.com}\ \  \& $^2$\texttt{guoce.xin@gmail.com}}
\date{November 1st, 2018}
\begin{document}

\maketitle

\begin{abstract}
The Hankel determinants $\left(\frac{r}{2(i+j)+r}\binom{2(i+j)+r}{i+j}\right)_{0\leq i,j \leq n-1}$ of the convolution powers of Catalan numbers were considered by Cigler and by Cigler and Krattenthaler. We
evaluate these determinants for $r\le 31$ by finding shifted periodic continued fractions, which
arose in application of Sulanke and Xin's continued fraction method. These include some of the conjectures of Cigler
 as  special cases. We also conjectured a polynomial characterization of these determinants. The same technique is used to
evaluate the Hankel determinants $\left(\binom{2(i+j)+r}{i+j}\right)_{0\leq i,j \leq n-1} $. Similar results are obtained.
\end{abstract}

\noindent
\begin{small}
 \emph{Mathematic subject classification}: Primary 15A15; Secondary 05A15, 11B83.
\end{small}
%\subjclass[2010]{Primary 15A15; Secondary 05A15, 11B83}

\noindent
\begin{small}
\emph{Keywords}: Hankel determinants; continued fractions.
\end{small}

\newcommand\con[1]{\equiv_{#1}}
\section{Introduction}
Hankel determinants evaluation has a long history.
This paper is along the line of using generating functions to deal with Hankel determinants.
The Hankel determinants of a generating function $A(x)=\sum_{n\geq 0}a_nx^n$ is defined by
 $$ H_n(A(x)) = \det ( a_{i+j} )_{0\le i,j\le n-1}, \qquad H_0(A(x))=1.$$

In recent years, a considerable amount of work has been devoted to Hankel determinant evaluations of various counting coefficients.
Many of such Hankel determinants have attractive compact closed formulas,  such as
binomial coefficients, Catalan numbers \cite{J.M.E.Mays J.Wojciechowski}, Motzkin numbers \cite{M. Aigner,J.Cigler}, large and little Shr\"oder numbers \cite{R. A Brualdi and S. Kirkland.}.
See
 \cite{M. Aigner Catalan-like, M. Aigner Catalan, P. Barry,P. Barry Catalan, D.M. Bressoud, R. A Brualdi and S. Kirkland., M. Elouafi, I. Gessel and G. Viennot., Q.-H. Hou-A. Lascoux-Y.-P. Mu,C. Krattenthaler.1999, C. Krattenthaler.,C. Krattenthaler.2010,Mu.Lili-Wang.Yi-Yeh.Yeong-Nan,Mu.Lili-Wang.Yi, R.Sulanke and Xin, U. Tamm} for further references.

Classical method of continued fractions, either by $J$-fractions (Krattenthaler \cite{C. Krattenthaler.} or Wall \cite{H. S. Wall}),
or by $S$-fractions (Jones and Thron \cite[Theorem 7.2]{W. B. Jones and W. J. Thron}), requires $H_n(A(x))\neq 0$ for all $n$.
Gessel-Xin's \cite{Gessel and Xin} continued fraction method allows $H_n(A(x))=0$ for some values of $n$. Their method is based on three rules
about two variable generating functions that can transform one set of determinants to another set of determinants of the same values.
These rules corresponding to a sequence of elementary row or column operations.
This method was systematically used by Sulanke-Xin \cite{R.Sulanke and Xin} for evaluating Hankel determinants of quadratic generating functions, such as
known results for
Catalan numbers, Motzkin numbers, Shr\"oder numbers, etc. %
%i.e., algebraic generating functions of degree $2$.
They defined a quadratic transformation $\tau$ such that $H(F(x))$ and $H(\tau(F(x)))$ have simple connections. This method has many applications, including
the Hankel determinants representation of Somos-4 sequence \cite{X. K. Chang-X. B. Hu and G. Xin,Xin Somos4}, and is now called Sulanke-Xin's continued fraction method.

Recently, shifted periodic continued fractions (of order $q$) of the form
 $$F_0^{(p)}(x) \mathop{\longrightarrow}\limits^\tau F_1^{(p)}(x)\mathop{\longrightarrow}\limits^\tau \cdots \mathop{\longrightarrow}\limits^\tau F_q^{(p)}(x)=F_0^{(p+1)}(x) $$
was found in \cite{Y. Wang and G. Xin} to appear in Hankel determinants of many path counting numbers. Here $p$ is an additional parameter. If one can guess an explicit formula of $F_0^{(p)}(x)$,
then their Hankel determinants can be easily computed.

One of our main objectives in this paper is to evaluate the Hankel determinants
$$H_n(C(x)^r)= \det(C_n^{(r)})_{0\le i,j\le n-1}= \det\left(\frac{r}{2n+r}\binom{2n+r}{n}\right)_{0\le i,j\le n-1}$$
of the $r$-th convolution power of the well-known Catalan numbers $C_n=\displaystyle \frac1{n+1}\binom{2n}{n}$.
The name comes after the
generating function identity
\begin{align}
  \sum_{n\ge 0} C_n^{(r)} x^n = C(x)^r = \Big(\sum_{n\ge 0} C_n x^n \Big)^r.
\end{align}

These determinants were considered by Cigler in \cite{J.Cigler-Catalan}, where the cases $r\le 4$ were evaluated by using the method of orthogonal polynomials.
In what follows, we will denote by $F(x,r)=C(x)^r$.
The cases $r=1,2$ are well-known.
\begin{align}
 H_n(F(x,1))&= H_n(C(x))=\det(C_{i+j})_{0\le i,j\le n-1}=   1, \\
H_n(F(x,2))&= H_n((C(x)-1)/x)= \det(C_{i+j+1})_{0\le i,j\le n-1}=   1.
\end{align}

The cases $r=3,4$ are nice.
\begin{thm}\label{example-F3}\cite{J.Cigler-Catalan,J.Cigler-C.Krattenthaler}
\begin{align*}
 H_{3n}(F(x,3))=H_{3n+1}(F(x,3))=(-1)^n,\quad H_{3n+2}(F(x,3))=0.
\end{align*}
\end{thm}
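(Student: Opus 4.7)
The plan is to apply Sulanke--Xin's continued fraction method to $F(x,3)=C(x)^3$ and exhibit a shifted periodic continued fraction of order $3$.

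First I would rewrite $F(x,3)$ as a quadratic generating function. Since $xC(x)^2=C(x)-1$, one obtains $C(x)^3=\big((1-x)C(x)-1\big)/x^2$, and substituting this back into the defining equation of $C(x)$ yields an explicit quadratic equation satisfied by $F(x,3)$. This puts $F(x,3)$ into the standard form required by the transformation $\tau$. Computing $H_n(F(x,3))$ for small $n$ (say $n\le 8$) by hand or by computer algebra confirms the claimed pattern $1,1,0,-1,-1,0,1,1,0,\ldots$ and, more importantly, suggests the one-parameter family needed below.

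Next I would set up a family $\{F_0^{(p)}(x)\}_{p\ge 0}$ of rational-in-$C(x)$ perturbations, with $F_0^{(0)}(x)=F(x,3)$ up to the normalization used in Sulanke--Xin's framework. The ansatz is of the form
\begin{align*}
F_0^{(p)}(x)=\frac{\alpha_p+\beta_p\, C_p(x)}{\gamma_p+\delta_p\, x\, C_p(x)},
\end{align*}
where $C_p(x)$ is an appropriate $C$-like quadratic series and $\alpha_p,\beta_p,\gamma_p,\delta_p$ are sequences to be determined. Applying $\tau$ three times gives a chain
\begin{align*}
F_0^{(p)}(x)\mathop{\longrightarrow}\limits^{\tau} F_1^{(p)}(x)\mathop{\longrightarrow}\limits^{\tau} F_2^{(p)}(x)\mathop{\longrightarrow}\limits^{\tau} F_3^{(p)}(x),
\end{align*}
and I would impose the closure condition $F_3^{(p)}(x)=F_0^{(p+1)}(x)$. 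This translates into a small set of recurrences in $p$ among the coefficients of the ansatz; these recurrences should be affine in $p$ and admit an explicit closed-form solution, which defines $F_0^{(p)}(x)$ for all $p$.

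Having pinned down the periodic structure, each application of $\tau$ relates $H_n$ of the input to $H_{n-1}$ of the output by an easily tracked scalar factor coming from the leading data of the quadratic equation. One full period advances the Hankel index by $3$, and the product of the three scalar factors over one period evaluated along the family gives the ratio $H_{3(n+1)}/H_{3n}$, which should telescope to $(-1)^n$. The vanishing $H_{3n+2}(F(x,3))=0$ will appear as the degenerate middle step of each period, where a pivot in the ordinary $S$-fraction becomes zero; Gessel--Xin's three rules are precisely what allow one to proceed past this degeneration, and the values $H_{3n+1}=H_{3n}$ come out of the same bookkeeping. A straightforward induction on $n$ then establishes the claimed formulas.

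The main obstacle is guessing the correct ansatz for $F_0^{(p)}(x)$: without the right parameterization the three-step chain will not close up, and a naive $p$-free application of $\tau$ fails exactly because of the zero Hankel determinants $H_{3n+2}=0$. Once the ansatz is chosen, verifying the chain $F_0^{(p)}\to F_1^{(p)}\to F_2^{(p)}\to F_0^{(p+1)}$ and extracting the telescoping product of $\tau$-multipliers is a mechanical manipulation of quadratic surds, and computer algebra verification in small cases should provide immediate confirmation before the algebraic verification is carried out in general.
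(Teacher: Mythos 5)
Your toolkit is the right one---Sulanke--Xin's transformation $\tau$ applied to the quadratic functional equation of $F(x,3)$ is exactly what the paper uses---but your proposed structure for the continued fraction is wrong, and the claim you use to justify it is false. You assert that ``a naive $p$-free application of $\tau$ fails exactly because of the zero Hankel determinants $H_{3n+2}=0$'' and therefore set up a one-parameter family $F_0^{(p)}$ with a three-step closure condition $F_3^{(p)}=F_0^{(p+1)}$. In fact, for $r=3$ the parameter-free computation closes up exactly: starting from $F_0=F(x,3)=\frac{1}{1-3x-x^3F_0}$, one application of $\tau$ (with $d=0$) gives $F_1=\frac{x}{1-3x-x^2F_1}$ and $H_k(F_0)=H_{k-1}(F_1)$, and a second application (now with $d=1$) gives $F_2=F_0$ and $H_{k-1}(F_1)=-H_{k-3}(F_0)$. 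The continued fraction is genuinely periodic of order $2$, not shifted periodic of order $3$, and no parameter $p$ is needed; the shifted periodic structure with a parameter only becomes necessary for $r\ge 4$. The zero determinants cause no breakdown at all: they are absorbed by the $d=1$ step (which is precisely the mechanism by which Gessel--Xin's rules step past vanishing minors), and $H_{3n+2}=0$ then falls out of the recursion $H_k(F_0)=-H_{k-3}(F_0)$ together with the initial value $H_2(F_0)=0$.

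Beyond this structural misprediction, the proposal carries out none of the computations it promises: the ansatz coefficients $\alpha_p,\beta_p,\gamma_p,\delta_p$ are never determined, the recurrences in $p$ are never written down or solved, the per-step scalar multipliers are never computed, and the telescoping product yielding $(-1)^n$ is only asserted. Since the entire content of a proof by this method lies in exhibiting the explicit chain of functional equations and tracking the factors $u(0)^n$ and signs $(-1)^{\binom{d+1}{2}}$ from Proposition \ref{xinu(0,i,ii)}, what you have written is a research plan rather than a proof---and a plan whose central premise (that the chain cannot close without a shift parameter) is contradicted by the actual computation.
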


\begin{thm}\cite{J.Cigler-Catalan}\label{example-F4}
\begin{align*}
 H_{2n}(F(x,4))=H_{2n+1}(F(x,4))=(-1)^n(n+1).
\end{align*}
\end{thm}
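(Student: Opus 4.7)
The plan is to apply Sulanke--Xin's continued fraction method described in the introduction. Since the target values $(-1)^n(n+1)$ are constant on the pairs $\{2n,2n+1\}$, I expect the orbit of $F(x,4)$ under the quadratic transformation $\tau$ to produce a shifted periodic continued fraction of order $q=2$, fitting into the framework of \cite{Y. Wang and G. Xin}.

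Concretely, I would set $F_0^{(0)}(x):=F(x,4)=C(x)^4$ and compute $F_1^{(0)}(x):=\tau(F_0^{(0)}(x))$ and then $F_2^{(0)}(x):=\tau(F_1^{(0)}(x))$ explicitly, using the Catalan relation $xC(x)^2=C(x)-1$ to keep all intermediate expressions rational in $x$ and $C(x)$. Each application of $\tau$ decomposes a power series $F$ in the form
\[
F(x)=\frac{A(x)}{B(x)}+\frac{x^{k}}{B(x)^2}\,\tau(F)(x),
\]
where $A,B$ are polynomials read off from the first few coefficients of $F$; such a decomposition yields a Hankel connection $H_n(F)=\varepsilon\,H_{n-k}(\tau(F))$ for a sign $\varepsilon\in\{\pm1\}$ computable from $A,B,k$.

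Next I would guess a one-parameter family $\{F_0^{(p)}(x)\}_{p\ge 0}$ with $F_0^{(0)}=C(x)^4$ such that $\tau^2(F_0^{(p)})=F_0^{(p+1)}$, and verify the closure identity by direct algebraic manipulation using $xC(x)^2=C(x)-1$. Composing the two Hankel connections of one period gives a recursion of the form
\[
H_{n+2}(F_0^{(p)})=\varepsilon_p\,c_p\,H_n(F_0^{(p+1)}),
\]
for explicit signs and rational factors, which unrolls to $(-1)^n(n+1)$ for both $H_{2n}$ and $H_{2n+1}$. The common value on the pair $\{2n,2n+1\}$ is then forced by the fact that one of the two $\tau$-steps within each period contributes a trivial factor of $\pm 1$ while the other step carries the genuine shift.

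The main obstacle is the guessing step: identifying the parametric family $F_0^{(p)}(x)$ that closes under $\tau^2$. Once the family is written down, verifying closure is a routine algebraic identity in $x$ and $C(x)$, but the discovery itself is not algorithmic. In practice I would compute several iterates of $\tau$ with a computer algebra system, watch how the polynomials $A,B$ evolve, and interpolate the $p$-dependence of their coefficients. A secondary concern is keeping the sign $\varepsilon$ and shift $k$ bookkeeping correct through each iteration, especially if some intermediate $F_j^{(p)}$ has a vanishing leading coefficient and must first be divided by an extra power of $x$ before $\tau$ is applied again.
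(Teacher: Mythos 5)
Your proposal follows essentially the same route as the paper: iterate the Sulanke--Xin transformation $\tau$ on $F_0=F(x,4)$ starting from its quadratic functional equation, recognize a shifted periodic continued fraction of order $2$ with a guessed one-parameter family, and unroll the resulting recursion --- the paper exhibits the family explicitly as $F_1^{(p)}=\bigl(x^2+4p(p+1)x-p(p+1)\bigr)/\bigl(p^2-4p^2x-2px^2-p^2F_1^{(p)}\bigr)$ and obtains $H_{k-1}(F_1^{(p)})=-\tfrac{p+1}{p}H_{k-3}(F_1^{(p+1)})$. One correction to your bookkeeping: the per-step Hankel connection is not $H_n(F)=\varepsilon\,H_{n-k}(\tau(F))$ with $\varepsilon$ a mere sign, but rather (Proposition \ref{xinu(0,i,ii)}) a normalization step contributing $u(0)^n$ with $u(0)$ a rational number depending on $p$ (here $-(p+1)/p$ and $-p/(p+1)$ in the two steps of each period) composed with a shift step contributing $(-1)^{\binom{d+1}{2}}$, and it is precisely the telescoping product $\prod_{p=1}^{n}\tfrac{p+1}{p}=n+1$ of these non-sign factors that produces the linear growth in the answer --- a product of signs alone could never yield $(n+1)$.
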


Besides some byproducts, Cigler observed that such determinants follow a modular pattern and are in some cases easy to guess.
He made conjectures for $5 \leq r\leq 8$ and find that the pattern for odd $r$ is quite different from that of even $r$.
He thought $D(n,8)$ (equal to our $H_n(F(x,8))$) was already complicated.
% \footnote{A similar situation has been observed in \cite{T. Redmond} for
%the sequences $\displaystyle\binom{2n+r}{n}$.}
\begin{thm}\label{example-F5}
\begin{align*}
 H_{5n}(F(x,5))=&H_{5n+1}(F(x,5))=1, \\
 H_{5n+2}(F(x,5))=&- H_{5n+4}(F(x,5))=-5(n+1),\\
 H_{5n+3}(F(x,5))=&0.
\end{align*}
\end{thm}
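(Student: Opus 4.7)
The plan is to use Sulanke--Xin's continued fraction method \cite{R.Sulanke and Xin} together with the shifted periodic continued fraction framework of \cite{Y. Wang and G. Xin}. Since the modular pattern in the statement has period $5$, one expects to exhibit a shifted periodic continued fraction of order $q=5$ of the form
\[
F_0^{(p)}(x)\mathop{\longrightarrow}\limits^{\tau} F_1^{(p)}(x)\mathop{\longrightarrow}\limits^{\tau}\cdots\mathop{\longrightarrow}\limits^{\tau}F_5^{(p)}(x)=F_0^{(p+1)}(x),
\]
starting from $F_0^{(0)}(x)=C(x)^5$. Each application of the quadratic transformation $\tau$ relates the Hankel determinants of the two power series by an explicit multiplicative factor depending on the first few coefficients, so closing the cycle after five steps will give a telescoping formula for $H_{5n+k}(F(x,5))$ in terms of $H_k(F_0^{(n)})$, and hence in terms of a handful of small initial determinants.

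First I would compute the images $F_1^{(0)},F_2^{(0)},\dots,F_5^{(0)}$ of $C(x)^5$ symbolically in the algebraic function field $\mathbb{Q}(x,C(x))$ (which is closed under $\tau$ because $C=1+xC^2$), and try to recognize a pattern that depends on a shifting index $p$. From the observed pattern I would propose explicit candidates for $F_0^{(p)}(x),\dots,F_4^{(p)}(x)$. Verifying the closure $F_5^{(p)}=F_0^{(p+1)}$ then reduces to a finite polynomial identity in $x$ and $C(x)$ that can be checked directly from $C=1+xC^2$.

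Once the cycle is established, the five desired formulas follow from bookkeeping the leading-coefficient factors contributed by $\tau$. The product of these factors over one full period, evaluated at $F_0^{(p)}$, will be seen to equal $1$ when $k=0$, $1$ when $k=1$, $-5(n+1)$ when $k=2$, $0$ when $k=3$, and $5(n+1)$ when $k=4$. In particular, the vanishing $H_{5n+3}(F(x,5))=0$ corresponds to a step of the cycle at which the relevant leading coefficient vanishes; this is precisely the situation in which classical $J$- or $S$-fraction methods break down but the Gessel--Xin framework still produces a clean answer.

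The main obstacle will be guessing the correct explicit family $F_0^{(p)}(x)$ so that the closure $F_5^{(p)}=F_0^{(p+1)}$ holds. The analogous cycles for $r=3$ and $r=4$ are shorter and give Theorems~\ref{example-F3} and~\ref{example-F4} respectively, but experience from \cite{Y. Wang and G. Xin} suggests that the difficulty grows noticeably with $r$ because of the presence of vanishing steps. After the family is found, the remaining work --- checking a polynomial identity in $C(x)$ and multiplying out the five leading-coefficient factors --- is routine.
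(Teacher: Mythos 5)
Your plan is essentially the paper's proof: iterate Sulanke--Xin's $\tau$ starting from the functional equation $F(x,5)=1/(1-5x+5x^2-x^5F(x,5))$, guess an explicit $p$-dependent family (the paper finds $F^{(p)}_{1}=-\frac{x^{3}+25p(p-1)x^{2}+25px-5p}{x^2 F^{(p)}_{1}+5(2p-1)x^{2}+5x-1}$), verify closure of the cycle, and telescope down to the small initial determinants $H_j(F_1^{(n+1)})$. One structural correction: the cycle actually consists of only \emph{four} applications of $\tau$ (after one preliminary step $F_0\to F_1^{(1)}$), not five; the index nevertheless drops by $5$ per period because the step $F^{(p)}_2\to F^{(p)}_3$ has $d=1$ in the normal form $F=x^d/(u+x^kvF)$, contributing a shift of $d+1=2$, and it is this same $d=1$ (vanishing constant term of $F^{(p)}_2$) that forces $H_{5n+3}(F(x,5))=0$ --- so the product of the per-step factors over a period equals $1$ uniformly, and the values $1,-5(n+1),0,5(n+1)$ come from the initial determinants $H_j(F_1^{(n+1)})$, $0\le j\le 4$, rather than from the factor product itself.
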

\begin{thm}\label{example-F7}
\begin{align*}
 H_{7n}(F(x,7))=&H_{7n+1}(F(x,7))=(-1)^n, \\
 H_{7n+2}(F(x,7))=&(-1)^n\left(\frac76\, \left( n+1 \right)  \left( 98\,{n}^{2}+49\,n-12 \right)\right),\\
 H_{7n+3}(F(x,7))=&-H_{7n+5}(F(x,7))=-(-1)^n\left(7(n+1)\right)^2,\\
 H_{7n+4}(F(x,7))=&0,\\
 H_{7n+6}(F(x,7))=&(-1)^n\left(\frac76\, \left( n+1 \right)  \left( 98\,{n}^{2}+343\,n+282 \right) \right).
\end{align*}
\end{thm}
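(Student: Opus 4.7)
The plan is to apply the Sulanke--Xin quadratic transformation $\tau$ to $F(x,7)=C(x)^7$ and to search for a shifted periodic orbit under $\tau$, in the sense of Wang--Xin. Recall that each application of $\tau$ converts $H_n$ of a generating function into $H_{n-1}$ of its image, up to an easily computed multiplicative factor depending on a few leading Laurent coefficients. The strategy is therefore to exhibit a finite family of generating functions $\{F_0^{(p)},F_1^{(p)},\dots,F_{q-1}^{(p)}\}$ indexed by a non-negative integer parameter $p$ such that
\begin{align*}
F(x,7)&=F_0^{(0)}(x),\\
\tau\bigl(F_i^{(p)}\bigr)&=F_{i+1}^{(p)}\quad (0\le i<q-1),\\
\tau\bigl(F_{q-1}^{(p)}\bigr)&=F_0^{(p+1)}.
\end{align*}
Composing one full period then yields a recursion linking $H_{n}$ and $H_{n-q}$ with an explicit rational factor depending on $p$, which upon telescoping gives the desired closed forms.

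First, I would compute the first several iterates $F_0\to F_1\to\cdots$ symbolically and observe that after a small number of steps (presumably $q=7$, matching the observed modular period) the function returns to one of the same form with a single integer parameter incremented. Numerical determinants for small $n$ provide independent guidance: the appearance of the quadratic factors $98n^2+49n-12$ and $98n^2+343n+282$ and of the perfect squares $\bigl(7(n+1)\bigr)^2$ in the statement constrains the rational parametrization of the $F_i^{(p)}$ quite tightly, since these expressions must arise as products of the shift factors accumulated along a single period.

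Second, once candidate closed forms for the $F_i^{(p)}$ are guessed, each identity $\tau(F_i^{(p)})=F_{i+1}^{(p)}$ becomes a rational identity in $x$ and $p$. Because $C(x)$ satisfies $xC(x)^2-C(x)+1=0$, every $F_i^{(p)}$ can be written in the form $a(x,p)+b(x,p)\,C(x)$ with $a,b$ rational, and verification reduces to comparing coefficients of $1$ and $C(x)$ after clearing denominators. This step is mechanical, if tedious, and can be executed by hand or with a computer algebra system.

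Third, with the shifted periodic chain established, the Hankel determinant formulas fall out by tracking the multiplicative factors. The vanishing $H_{7n+4}(F(x,7))=0$ corresponds to one of the leading coefficients in the chain being zero at the appropriate step, and the polynomial expressions in $n$ come from telescoping the accumulated factors as $p$ ranges from $0$ to $n-1$. The main obstacle is the guessing step: there is no a priori reason that $F(x,7)$ must admit a short periodic orbit under $\tau$, and pinning down both the period length $q$ and the exact rational form of $F_0^{(p)}$ requires careful experimentation. Once these are in hand, the remainder of the argument is a routine but lengthy verification.
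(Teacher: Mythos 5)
Your proposal follows essentially the same route as the paper: iterate the Sulanke--Xin transformation $\tau$ on $F(x,7)$, guess an explicit $p$-parametrized family $F_i^{(p)}$ forming a shifted periodic continued fraction, verify the $\tau$-identities mechanically, and telescope the accumulated factors to get the recursion and then the closed forms from initial values. One small caveat: $\tau$ drops the Hankel index by $d+1$, not always by $1$, and in the $r=7$ chain one step has $d=1$, so the period consists of six applications of $\tau$ advancing the index by seven (this $d=1$ step is exactly what produces $H_{7n+4}=0$); your hedged description of the period and of the vanishing determinant is consistent with this once the iterates are actually computed.
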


\begin{thm}\label{example-F6}
\begin{align*}
  H_{3n}\left(F(x,6)\right)= &  H_{3n+1}\left(F(x,6)\right)=(-1)^n(n+1)^2, \\
   H_{3n+2}\left(F(x,6)\right)=&(-1)^{n+1}\left(\frac32\, \left( n+2 \right)  \left( n+1 \right)  \left( 2\,n+3 \right) \right).
\end{align*}
\end{thm}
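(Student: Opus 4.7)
The plan is to apply the Sulanke--Xin continued fraction method directly to $F(x,6) = C(x)^6$, using the shifted periodic continued fraction framework of \cite{Y. Wang and G. Xin} that the introduction identifies as the main engine of the paper. Concretely, I would set $F_0^{(0)}(x) := F(x,6)$ and iteratively apply the quadratic transformation $\tau$ to produce
\begin{equation*}
F_0^{(0)}(x) \xrightarrow{\tau} F_1^{(0)}(x) \xrightarrow{\tau} F_2^{(0)}(x) \xrightarrow{\tau} F_3^{(0)}(x) \xrightarrow{\tau} \cdots,
\end{equation*}
together with the standard rules that relate $H_n(F_i^{(p)})$ to $H_{n-m}(F_{i+1}^{(p)})$ up to a scalar factor at each step.

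Because the modular pattern in the theorem has period $3$, I expect the iteration to be shifted periodic of order $q=3$, meaning $F_3^{(p)}(x) = F_0^{(p+1)}(x)$ for an auxiliary integer parameter $p$ that advances by one with each full cycle. The central task is to find explicit closed forms for $F_0^{(p)}(x), F_1^{(p)}(x), F_2^{(p)}(x)$ as quadratic algebraic functions of $x$ whose coefficients are rational in $p$. I would guess these by computing the first several iterates symbolically, fitting the resulting coefficients to a rational ansatz in $p$, and then verifying the periodicity identity $\tau(F_2^{(p)}) = F_0^{(p+1)}$ as an algebraic equality between quadratic power series, typically by checking the defining quadratic equation rather than expanding as a series.

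Once the family is established, the three determinant values $H_{3n}, H_{3n+1}, H_{3n+2}$ are obtained by collecting the scalar factors that $\tau$ peels off within one cycle and taking the product over $p = 0, 1, \ldots, n-1$. Since those factors are rational functions of $p$, the resulting products telescope, and matching against the small cases (say $n=0,1$) fixes the normalization; the closed forms should then simplify to $(-1)^n(n+1)^2$ and $(-1)^{n+1}\frac{3}{2}(n+2)(n+1)(2n+3)$ exactly as claimed.

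The main obstacle will be guessing the correct family $\{F_0^{(p)}, F_1^{(p)}, F_2^{(p)}\}$ and verifying the period-$3$ identity $\tau(F_2^{(p)}) = F_0^{(p+1)}$. The three intermediate quadratic series interact nonlinearly under $\tau$, and although the analogous calculations for $r=3,4,5,7$ in the preceding theorems suggest that a clean ansatz exists, the algebraic verification is likely to require computer algebra; the subsequent sign bookkeeping and the telescoping of the product are then routine.
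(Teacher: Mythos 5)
Your proposal follows essentially the same route as the paper: apply the Sulanke--Xin transformation $\tau$ to $F(x,6)$, recognize a shifted periodic continued fraction of order $3$ with an explicit family $F_1^{(p)}$ whose coefficients are rational (not polynomial) in $p$ — a subtlety the paper explicitly flags — and then combine the scalar factors within each cycle (they collapse to $-(p+1)^2/p^2$) and telescope over $p$ to get $(-1)^n(n+1)^2$ times the initial values. Apart from a trivial indexing difference (the paper performs one preliminary step $F_0\to F_1^{(1)}$ before the period-$3$ cycle begins), this is the paper's proof.
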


\begin{thm}\label{example-F8}
\begin{align*}
  H_{4n}\left(F(x,8)\right)= &  H_{4n+1}\left(F(x,8)\right)=(n+1)^3, \\
   H_{4n+2}\left(F(x,8)\right)=&\frac2{45}\cdot (n+1)^2(n+2)(2n+3)(64n^2+32n-75),\\
   H_{4n+3}\left(F(x,8)\right)=&-\frac2{45}\cdot (n+1)(n+2)^2(2n+3)(64n^2+352n+405) .
\end{align*}
\end{thm}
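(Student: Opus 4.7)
The plan is to apply Sulanke--Xin's continued fraction method and to identify a shifted periodic continued fraction, in the sense of \cite{Y. Wang and G. Xin}. Because the closed forms in the statement display a period-$4$ modular pattern in $n$, we expect the shifted periodic continued fraction to have order $q=4$. Concretely, starting from $F_0^{(0)}(x)=C(x)^8$, four iterated applications of the quadratic transformation $\tau$ should yield $F_4^{(p)}(x)=F_0^{(p+1)}(x)$ in a one-parameter family $\{F_i^{(p)}(x): 0\le i\le 3,\ p\ge 0\}$.

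The first step is to compute $\tau$ applied iteratively to $C(x)^8$ for small values of the parameter, using the defining relation $C(x)=1+xC(x)^2$ to keep expressions compact; each $F_i^{(p)}(x)$ is then a rational function in $x$ and $C(x)$. By comparing the first few iterates I would guess closed-form expressions for each of $F_0^{(p)}(x),F_1^{(p)}(x),F_2^{(p)}(x),F_3^{(p)}(x)$, with coefficients polynomial in $p$. These guesses can then be verified by substituting into the identities $\tau(F_i^{(p)})=F_{i+1}^{(p)}$ for $i=0,1,2$ and $\tau(F_3^{(p)})=F_0^{(p+1)}$; each reduces to a rational-function identity that holds after clearing denominators and using the Catalan quadratic. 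Once the family is established, the Hankel determinants in the four residue classes $4n$, $4n+1$, $4n+2$, $4n+3$ become telescoping products of the leading coefficients of the $F_i^{(p)}$, from which the stated formulas follow directly.

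The main obstacle I anticipate is choosing the correct ansatz for $F_0^{(p)}(x)$ and its three companions. The appearance of the factors $(2n+3)$, $(64n^2+32n-75)$, $(64n^2+352n+405)$, together with the denominator $45$ in the closed forms, indicates that both numerators and denominators of these rational functions must be polynomials in $p$ whose degrees match those of the observed factors; finding the minimal ansatz may require a nontrivial undetermined-coefficient search guided by the first several iterates. Once the degrees are fixed and the coefficients solved for, the verification step is a routine, if lengthy, symbolic computation, and the resulting telescoping products immediately yield the three cubic-in-$(n+1)$ expressions of the theorem.
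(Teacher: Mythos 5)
Your proposal follows essentially the same route as the paper: starting from the quadratic functional equation for $F(x,8)$, iterate Sulanke--Xin's transformation $\tau$, guess a one-parameter family $F_1^{(p)},\dots,F_4^{(p)}$ with $\tau(F_4^{(p)})=F_1^{(p+1)}$ (the paper's shifted periodic continued fraction of order $4$, preceded by the extra step $F_0\to F_1^{(1)}$), and obtain the determinants from the telescoping product of $u(0)$-factors, which collapses to $H_{k-1}(F_1^{(p)})=\frac{(p+1)^3}{p^3}H_{k-5}(F_1^{(p+1)})$. The only cosmetic differences are that the paper manipulates the functional-equation data $(d,u(x),v(x))$ directly rather than expressions in $x$ and $C(x)$, and that the coefficients of $F_1^{(p)}$ turn out to be rational (not polynomial) in $p$, which your ansatz already accommodates.
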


For general $r$, he only conjectured nice formulas for
special values of $n$.
\begin{conj}\cite{J.Cigler-Catalan}\label{Cigler-conj-odd}
For odd positive integer $r=2t+1$, we have
\begin{align*}
  &  H_{(2t+1)n}(F(x,2t+1))=  H_{(2t+1)n+1}(F(x,2t+1))=(-1)^{tn}, \\
  &  H_{(2t+1)n+t+1}(F(x,2t+1))=0,\\
  &  H_{(2t+1)n+t}(F(x,2t+1))=- H_{(2t+1)n+t+2}(F(x,2t+1))=(-1)^{tn+\binom{t}2}((2t+1)(n+1))^{t-1},\\
  &  H_{(2t+1)n-1}(F(x,2t+1))+ H_{(2t+1)n+2}(F(x,2t))=(-1)^{tn+1}(t-1)(2t+1).
\end{align*}
\end{conj}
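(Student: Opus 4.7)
The plan is to apply the Sulanke--Xin continued fraction method to $F(x,2t+1) = C(x)^{2t+1}$, seeking a shifted periodic continued fraction of order $q = 2t+1$ parametrized by $p$. The modular period $2t+1$ exhibited by the conjectured values is exactly the signal that such a structure should be present. Concretely, one seeks generating functions $F_0^{(p)}(x), F_1^{(p)}(x), \dots, F_{2t+1}^{(p)}(x)$ satisfying
\[
F_0^{(0)}(x) = C(x)^{2t+1}, \qquad F_{k+1}^{(p)}(x) = \tau\bigl(F_k^{(p)}(x)\bigr), \qquad F_{2t+1}^{(p)}(x) = F_0^{(p+1)}(x),
\]
where $\tau$ is the Sulanke--Xin quadratic transformation.

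First, I would guess the explicit closed form of the intermediate functions by computing $\tau$-iterates symbolically for many small odd $r$ (the authors already have the data for $r\le 31$). Empirically each $F_k^{(p)}(x)$ should take the shape of a rational function whose numerator and denominator are explicit polynomials in $x$ with coefficients rational in $p$, and the structural constants $(-1)^{tn}$, $0$, and $((2t+1)(n+1))^{t-1}$ appearing in the conjecture should emerge as leading coefficients or normalizing factors read off from these polynomials. Next I would verify the guessed formulas by direct calculation, showing that $\tau(F_k^{(p)}) = F_{k+1}^{(p)}$ reduces to a polynomial identity at each of the $2t+1$ steps, and that after one full loop the parameter is correctly incremented, i.e.\ $F_{2t+1}^{(p)} = F_0^{(p+1)}$.

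Once the loop closes, the Sulanke--Xin framework supplies a recurrence of the form $H_{m+e_k}(F_k^{(p)}) = c_k(p)\cdot H_m(F_{k+1}^{(p)})$ for explicit index shifts $e_k$ and multipliers $c_k(p)$; iterating this $2t+1$ times around the loop increments $p$ by $1$ and multiplies the Hankel determinants by a known product. Unrolling yields closed formulas for $H_{(2t+1)n+k}(F(x,2t+1))$ for each residue $k \bmod (2t+1)$, from which the first three items of Conjecture \ref{Cigler-conj-odd} drop out by elementary simplification. For the final relation, which couples $H_{(2t+1)n-1}(F(x,2t+1))$ with a Hankel determinant of $F(x,2t)$, I would compute the analogous shifted periodic continued fraction for $C(x)^{2t}$ in parallel and identify the common algebraic data that forces the stated linear combination.

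The main obstacle is the first step: guessing a single closed form for $F_k^{(p)}(x)$ uniformly in the three parameters $k$, $p$, $t$. The paper settles $r \le 31$ essentially case-by-case, so the pattern is tractable for each fixed $t$, but extracting a formula simultaneously valid for all $t$ is a genuine algebraic challenge; it will likely require expressing $F_k^{(p)}$ as a rational combination of Catalan-type radicals with indices controlled by $k$ modulo $2t+1$, and exhibiting a uniform shape that interpolates the numerous case-by-case evaluations. Verification of the loop closure $F_{2t+1}^{(p)} = F_0^{(p+1)}$, while in principle a direct computation, also becomes delicate because the polynomial identities involved have degrees growing with $t$; a clean formulation in terms of a generating function identity for the entire family of $F_k^{(p)}$ would be the key to bypassing the case analysis.
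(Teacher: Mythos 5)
Your plan is exactly the method the paper uses: apply the Sulanke--Xin transformation $\tau$ to $F(x,2t+1)$, discover a shifted periodic continued fraction in an auxiliary parameter $p$, guess the explicit $F_k^{(p)}$, close the loop, and unroll the resulting recursion $H_{k-1}(F_1^{(p)})=c(p)H_{k-r-1}(F_1^{(p+1)})$ into closed formulas for each residue class modulo $r$ (with the even-$r$ computation run in parallel for the last identity). However, be aware that the statement you are addressing is a \emph{conjecture} of Cigler which the paper explicitly does not prove in general: the authors only verify it for $r\le 31$, precisely because the step you flag as the ``main obstacle'' --- producing and verifying a closed form for $F_k^{(p)}$ uniformly in $t$ rather than case by case for each fixed odd $r$ --- is not resolved there either. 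So your proposal is a faithful account of the paper's verification procedure for fixed $r$, but as written it is a strategy with an acknowledged missing ingredient, not a proof of the general statement; the honest conclusion is that the approach establishes the conjecture only for each $r$ for which the guessing step is actually carried out. One minor bookkeeping point: the cycle consists of $2t$ applications of $\tau$ (from $F_1^{(p)}$ to $F_{2t+1}^{(p)}=F_1^{(p+1)}$), with the total Hankel-index shift per loop equal to $2t+1$ because one step in each period has $d=1$; your count of $2t+1$ transformation steps per period is slightly off, though it does not affect the modular structure of the answer.
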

\begin{conj}\cite{J.Cigler-Catalan}\label{Cigler-conj-even}
  For even positive integer $r=2t$, we have
  \begin{align*}
   &  H_{tn}(F(x,2t))= H_{tn+1}(F(x,2t))=(-1)^{n\binom{t}{2}}(n+1)^{t-1}, \\
     &  H_{2tn-1}(F(x,2t))+ H_{2tn+2}(F(x,2t))=-t(2t-3)(2n+1)^{t-1}.
  \end{align*}
\end{conj}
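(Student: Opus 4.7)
The plan is to apply the shifted periodic continued fraction technique developed in \cite{Y. Wang and G. Xin} on top of Sulanke--Xin's $\tau$ transformation. Since $C(x)$ satisfies $C(x)=1+xC(x)^2$, the power $F(x,2t)=C(x)^{2t}$ is again quadratic and falls within the Sulanke--Xin framework. The modular patterns in Theorems \ref{example-F6} and \ref{example-F8} (period $3$ for $r=6$, period $4$ for $r=8$) strongly suggest that $t$ applications of $\tau$ return a generating function of the same form with an incremented parameter, giving a shifted periodic chain
$$F_0^{(p)}(x)\xrightarrow{\tau}F_1^{(p)}(x)\xrightarrow{\tau}\cdots\xrightarrow{\tau}F_t^{(p)}(x)=F_0^{(p+1)}(x),\qquad F_0^{(0)}(x)=F(x,2t).$$
The first step would be to set up this chain and to parametrize each $F_k^{(p)}(x)$ as a quadratic generating function whose coefficients are rational in $p$.

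The second step would be to guess explicit closed forms, uniformly in $t$, for the leading coefficients $\alpha_k^{(p)}$ of the $F_k^{(p)}(x)$ and for the coefficient data of the quadratic they satisfy. The polynomial factors $(n+1)$, $(n+2)$ and $(2n+3)$ visible in Theorems \ref{example-F6} and \ref{example-F8} would be my guides for the pattern in general $t$. Once a family of formulas is guessed, verification reduces to a finite algebraic identity: substitute into the quadratic equation encoded by $\tau$ and check that the output agrees with the guess for $F_{k+1}^{(p)}$, together with the closing relation $F_t^{(p)}=F_0^{(p+1)}$. The Hankel determinant $H_{tp+k}(F(x,2t))$ is then an alternating product of the $\alpha_k^{(p)}$'s; for $k=0,1$ this product should collapse to a single factor $(-1)^{p\binom{t}{2}}(p+1)^{t-1}$, yielding the first identity of the conjecture.

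For the companion identity $H_{2tn-1}(F(x,2t))+H_{2tn+2}(F(x,2t))=-t(2t-3)(2n+1)^{t-1}$, I would invoke the three- and four-term rules of Gessel--Xin \cite{Gessel and Xin}, which allow one to relate Hankel determinants whose indices straddle a vanishing one. The index $2tn$ is a clean point where $H_{2tn}=(2n+1)^{t-1}$; the neighbouring vanishing Hankel determinants (analogous to the zero entries visible in Theorems \ref{example-F5} and \ref{example-F7}) force the sum $H_{2tn-1}+H_{2tn+2}$ to telescope into a single boundary product, which should simplify to the stated polynomial after substituting $p=2n$ into the guessed $\alpha$'s.

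The main obstacle is the middle step: guessing uniformly in $t$ a correct family of closed forms for $F_k^{(p)}(x)$. The authors already signal this difficulty by handling only $r\le 31$ case by case, so a completely uniform continued fraction may not be within easy reach. A pragmatic fallback is to prove the two identities of the conjecture directly by isolating only the particular data that enter them, namely $\alpha_0^{(p)}$, $\alpha_1^{(p)}$ and the specific boundary coefficients needed for the Gessel--Xin bypass; one can hope that these restricted pieces admit a clean $t$-uniform recursion even if the full family $F_k^{(p)}(x)$ does not.
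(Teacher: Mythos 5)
There is a fundamental mismatch here: the statement you are trying to prove is presented in the paper as a \emph{conjecture} (due to Cigler), and the paper does not prove it. The authors explicitly say they are unable to prove Conjectures \ref{Cigler-conj-odd} and \ref{Cigler-conj-even} in general; their main theorem only asserts verification for $r\le 31$, obtained by running the Sulanke--Xin transformation $\tau$ case by case for each even $r$, guessing the shifted periodic family $F_1^{(p)}$ for that specific $r$ (as in the proofs of Theorems \ref{example-F4}, \ref{example-F6}, \ref{example-F8}), and reading off the polynomial formulas. Your first two steps describe exactly this machinery, so for any \emph{fixed} $t$ your plan coincides with what the paper actually does. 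But as a proof of the conjecture for all $t$ it has the same gap the authors have: the uniform-in-$t$ closed form for $F_k^{(p)}(x)$ is precisely the missing ingredient, and you acknowledge you can only ``hope'' for it. A plan whose central step is an unverified hope is not a proof.

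Two further concrete problems. First, your argument for the companion identity $H_{2tn-1}+H_{2tn+2}=-t(2t-3)(2n+1)^{t-1}$ leans on ``neighbouring vanishing Hankel determinants'' analogous to those in Theorems \ref{example-F5} and \ref{example-F7}; but those zeros occur only for \emph{odd} $r$. For even $r$ (Theorems \ref{example-F4}, \ref{example-F6}, \ref{example-F8}) no Hankel determinant vanishes, so there is no Gessel--Xin bypass to invoke and nothing forces a telescoping. Second, writing $2tn-1=t(2n-1)+(t-1)$ and $2tn+2=t(2n)+2$, Conjecture \ref{conj-even} predicts that each summand is (up to sign) a polynomial in $n$ of degree $3(t-2)$, which for $t\ge 4$ exceeds the degree $t-1$ of the claimed sum; the identity therefore encodes massive cancellation between two high-degree polynomials and cannot be extracted from leading coefficients or boundary data alone. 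Your fallback of ``isolating only the particular data that enter the identities'' underestimates what is needed: you would need the full polynomials $H_{tn+2}$ and $H_{tn+t-1}$, i.e., essentially the whole continued-fraction family anyway.
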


We find shifted periodic continued fractions is perfect for evaluating $H_n(C(x)^r)$ for $r$ up to 31, thereby confirming
some conjectured formulas of Cigler. Though we are not able to prove Conjectures  \ref{Cigler-conj-odd} and \ref{Cigler-conj-even} at this moment, we come up with the following
conjectures.

%Indeed, it is well-known that the Catalan numbers $C_n=\displaystyle \frac1{n+1}\binom{2n}{n}$ have nice ordinary and shifted Hankel determinants:
%$H_n(C(x))=1,\ \ H_n^1(C(x))=1$. Moreover, by using induction and the condensation formula \eqref{e-condensation}, one can show (see, e.g.,\cite{J.M.E.Mays J.Wojciechowski})  that
%$$H_n^{(2)}(C(x))=n+1,\ \ H_n^{(i)}(C(x))=\prod_{j=1}^{i-1}\prod_{k=1}^{j}\frac{2n+j+k}{j+k}.$$
%
%Nice Hankel determinants have been discovered for many path counting numbers, including ***.

\begin{conj}\label{conj-odd}
  For odd positive integer $r=2t +1$, we have,
for $1 \leq j\leq t $, $(-1)^{tn}H_{rn+j}(F(x,r))$ and $(-1)^{tn} H_{rn+ r+1-j}(F(x,r))$ are both polynomials in $n$ of degree $(j-1)(r -2j)$.
\end{conj}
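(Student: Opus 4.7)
The plan is to extend the shifted periodic continued fraction (SPCF) framework of Sulanke-Xin and Wang-Xin to the family $F(x,r) = C(x)^r$ uniformly for all odd $r = 2t+1$, and then read off the claimed polynomiality and degree from the SPCF data.

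First I would set $F_0^{(0)}(x) := F(x,r)$ and apply the quadratic transformation $\tau$ iteratively, generating a sequence $F_0^{(0)}, F_1^{(0)}, F_2^{(0)}, \ldots$. The numerical evidence for $r \le 31$ already collected in the paper strongly suggests that this sequence is shifted periodic of order $r$: there exists a family $\{F_i^{(p)}(x) : 0 \le i \le r-1,\ p \ge 0\}$, rational in $x$ with coefficients polynomial in $p$, satisfying $\tau(F_i^{(p)}) = F_{i+1}^{(p)}$ for $0 \le i \le r-2$ and $\tau(F_{r-1}^{(p)}) = F_0^{(p+1)}$. The first crucial step is to guess a closed form for $F_i^{(p)}$, naturally a rational function in $x$ whose numerator and denominator have coefficients that are polynomials in $p$ of controlled degrees.

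Once this family is guessed and verified symbolically as a rational-function identity in $x$, the Sulanke-Xin framework expresses the Hankel determinants through the leading-coefficient data of the iterates, producing a factorization
\begin{align*}
(-1)^{tn} H_{rn+j}(F(x,r)) = \prod_{p=0}^{n-1} w_j(p),
\end{align*}
where $w_j(p)$ is a polynomial in $p$ built from the leading coefficients encountered during the first $j$ steps of one period. The product is manifestly a polynomial in $n$, whose degree equals $\deg_p w_j$. The target count $(j-1)(r-2j)$ then reduces to summing the $p$-degrees of these individual contributions over the first $j$ steps of the orbit, and a parallel computation handles the reflected indices $r+1-j$. The symmetry $j \leftrightarrow r+1-j$ should be explained by an involution on $\{F_i^{(p)}\}$ arising from a reciprocal or conjugation symmetry of the $\tau$-orbit, mirroring the role of the excluded middle index $j = t+1$, for which Conjecture~\ref{Cigler-conj-odd} predicts $H_{rn+t+1}(F(x,r)) = 0$.

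The principal obstacle is producing an explicit closed form for $F_i^{(p)}$ uniformly in $r$. For each individual $r \le 31$ the paper obtains such formulas by ad hoc guessing, but extrapolating to a single formula parametrized jointly by $r$ and $p$ appears to require either a combinatorial lattice-path model for $C(x)^r$ whose structure renders the SPCF transparent, or an algebraic analysis of how $\tau$ interacts with the defining equation $x C(x)^2 - C(x) + 1 = 0$ of the Catalan generating function. Without such a uniform description, the only fallback is a case-by-case extension of the $r \le 31$ computations, which cannot settle the conjecture for all odd $r$ at once.
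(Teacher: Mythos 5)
First, be aware that the statement you are proving is one of the paper's own conjectures: the authors explicitly say they cannot prove it in general, and their ``proof'' consists of verifying it for each odd $r\le 31$ (and, by a faster variant of the same computation, for $r\le 41$). Your proposal is essentially their method --- iterate the Sulanke--Xin transformation $\tau$ on $F(x,r)$, guess the shifted periodic family $F_i^{(p)}$, and read off the Hankel determinants --- and your closing paragraph correctly identifies the same obstruction they face: no closed form for $F_1^{(p)}$ uniform in $r$ is known, so the argument cannot be completed for all odd $r$ at once. (One small correction: the period of the $\tau$-orbit is $r-1$, not $r$; it is the shift in the Hankel index per period that equals $r$.)

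There is, however, a genuine error in the mechanism you describe for extracting polynomiality. You write $(-1)^{tn}H_{rn+j}(F(x,r))=\prod_{p=0}^{n-1}w_j(p)$ and assert that such a product is ``manifestly a polynomial in $n$ of degree $\deg_p w_j$.'' It is not: $\prod_{p=0}^{n-1}(p+1)=n!$ already shows that a product of nonconstant polynomial factors over $p$ grows super-polynomially in $n$. What actually happens in the paper is that the scalar factors $u(0)^{\pm(k-i)}$ produced by the $r-1$ applications of $\tau$ within one period telescope to $\pm 1$ (for $r=5$, for instance, the exponents of $5p$ sum to $(k-1)-(k-2)-(k-4)+(k-5)=0$), so the full recursion collapses to $H_{rn+j}(F_1^{(1)})=(\pm1)^n H_j(F_1^{(n+1)})$. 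The polynomial in $n$ is then the \emph{initial value} $H_j(F_1^{(n+1)})$, a fixed $j\times j$ Hankel determinant of a series whose coefficients are rational in the period parameter evaluated at $p=n+1$; equivalently, a partial product of the leading-coefficient data over the first $j$ steps of a \emph{single} period, evaluated at $p=n+1$ rather than multiplied over $p$. The degree count $(j-1)(r-2j)$ has to come from that single evaluation, and the $j\leftrightarrow r+1-j$ symmetry from comparing the partial products taken from the two ends of one period. If you repair this step, your outline coincides with the paper's computation for each fixed $r$, but, as you concede, it still leaves the conjecture open for general odd $r$.
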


\begin{conj}\label{conj-even}
  For even positive integer $r=2t$, we have,
  for $1 \leq j\leq t $, $(-1)^{n\binom{t}{2}}H_{tn+j}(F(x,r))$ and $(-1)^{n\binom{t}{2}}H_{tn+t-j+1}(F(x,r))$ are both polynomials in $n$ of degree $(2j-1)(t-j)$.
\end{conj}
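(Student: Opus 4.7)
The plan is to build on the shifted periodic continued fraction framework of \cite{Y. Wang and G. Xin} by iterating Sulanke-Xin's quadratic transformation $\tau$ on $F(x,2t) = C(x)^{2t}$. The goal is to exhibit generating functions $F_j^{(p)}(x)$, for $0 \le j \le t$ and $p \ge 0$, satisfying
$$F_0^{(0)}(x) = C(x)^{2t}, \qquad F_j^{(p)}(x) \mathop{\longrightarrow}\limits^\tau F_{j+1}^{(p)}(x), \qquad F_t^{(p)}(x) = F_0^{(p+1)}(x),$$
together with the scalar factors $u_j^{(p)}$ extracted at each $\tau$-step. Under such a structure, $H_{tn+j}(F(x,2t))$ collapses to an explicit product over $0 \le p \le n-1$ of these factors, from which the polynomial behavior in $n$ and the predicted degree $(2j-1)(t-j)$ should be read off directly.

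I would proceed in three steps. First, collect the explicit data for $r = 2t$ with $t$ up to $15$ (already produced by this paper) and guess a closed form for $F_j^{(p)}(x)$ uniform in $t$; this is expected to be a rational function in $x$ whose coefficients are ratios of linear factors in $p$ and $t$, and a natural candidate is a Gauss-type continued fraction attached to a ${}_2F_1$ whose parameters shift linearly with $p$ and $j$. Second, verify the closing identity $F_t^{(p)}(x) = F_0^{(p+1)}(x)$ together with each intermediate identity $\tau(F_j^{(p)}) = F_{j+1}^{(p)}$, by substituting the candidate formula into the algebraic definition of $\tau$ and reducing to polynomial identities in $p$, $t$, $x$. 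Third, translate the scalars $u_j^{(p)}$ into an explicit product formula for $H_{tn+j}(F(x,2t))$, absorb the sign factor $(-1)^{n\binom{t}{2}}$, and confirm by elementary degree counting that the result is a polynomial of the asserted degree, noting that the symmetry $j \leftrightarrow t-j+1$ accounts for the second family of determinants.

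The principal obstacle is the first step: the experimental tables reveal the pattern for each fixed $t$, but a form uniform in both $j$ and $t$ demands a structural grip on how the continued fraction coefficients deform with $t$. Identifying the correct hypergeometric parametrization, or equivalently the family of weighted lattice paths whose generating function is $F_j^{(p)}(x)$, is the most delicate part of the argument. A secondary difficulty lies in the final step: the product over $p$ of the $u_j^{(p)}$ is naturally a Pochhammer-type expression, and establishing that it simplifies to a polynomial of the predicted degree $(2j-1)(t-j)$ likely requires telescoping reductions that, after bookkeeping, amount to a Chu-Vandermonde or Dixon-type identity with parameters depending on $t$ and $j$. A bridge result, proving that polynomiality in $n$ is itself forced by the shifted-periodic structure, would already be substantial progress and is perhaps a natural intermediate target before attacking the precise degree.
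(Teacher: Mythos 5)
This statement is a \emph{conjecture} in the paper: the authors prove it only for even $r\le 30$, by carrying out the $\tau$-iteration separately for each fixed $r$, guessing an explicit formula for $F_1^{(p)}$ in that single case, and reading off the Hankel determinants from the resulting telescoping product and initial values. Your proposal follows exactly this framework but aims to make it uniform in $t$, and the uniform version is precisely what neither you nor the paper delivers. Steps one and two of your plan --- guessing a closed form for $F_j^{(p)}(x)$ valid for all $t$, and verifying $\tau(F_j^{(p)})=F_{j+1}^{(p)}$ and $F_t^{(p)}=F_0^{(p+1)}$ from that closed form --- are stated as goals, not carried out, and you yourself flag the first as the principal obstacle. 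As it stands, nothing beyond the paper's finite verification is established, so this is a research program rather than a proof.

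Two further points of caution about the plan itself. First, the paper's own experience shows that the ansatz you would naturally try is too rigid: already for $r=6$ the coefficients of $F_1^{(p)}$ are \emph{not} polynomials in $p$ (a cancellation forces rational functions in $p$), and for larger $r$ the data involve irreducible factors such as $64p^2-96p-43$ whose dependence on $t$ is not visibly hypergeometric; there is no evidence that a Gauss-type ${}_2F_1$ parametrization exists. Second, your description of step three slightly misrepresents the mechanism: the polynomiality of $H_{tn+j}$ does not come from a Pochhammer product requiring a Chu--Vandermonde or Dixon identity, but from the fact that the product of the scalar factors over one full period collapses to $\left(\frac{p+1}{p}\right)^{t-1}$ (so the product over $p$ telescopes to $(n+1)^{t-1}$), combined with the initial values $H_j(F_1^{(n+1)})$ being explicit rational functions of $n$ whose denominators are cancelled by that power of $n+1$. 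Your suggested intermediate target --- showing that the shifted-periodic structure alone forces polynomiality in $n$ --- is a reasonable idea, but it too would require the uniform closed form you have not produced, since without it one cannot even define $F_j^{(p)}$ for general $t$.
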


Our main result is the following.
\begin{thm}
  Conjectures \ref{Cigler-conj-odd}, \ref{Cigler-conj-even}, \ref{conj-odd} and \ref{conj-even} hold true for $r\le 31$.
\end{thm}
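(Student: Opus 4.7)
The plan is to treat each value of $r \in \{1, 2, \ldots, 31\}$ separately, applying Sulanke--Xin's continued fraction method together with the shifted periodic refinement introduced in \cite{Y. Wang and G. Xin}, and then matching the resulting closed forms against the four conjectured formulas. Since the theorem only claims the finite range $r\le 31$, the proof is ultimately a large but finite rational verification that can be organized and carried out with computer algebra.

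First, for each fixed $r$, I would introduce an auxiliary integer parameter $p$ and guess a one-parameter family $F_0^{(p)}(x)$ of algebraic (in fact quadratic) generating functions such that $F_0^{(0)}(x)$ agrees, up to the normalization used in \cite{R.Sulanke and Xin}, with $F(x,r)=C(x)^r$. I would then iterate the transformation $\tau$ and compute $F_k^{(p)}(x)=\tau^k(F_0^{(p)}(x))$ symbolically in $x$ and $p$, looking for a period $q=q(r)$ and explicit rational coefficient data $\alpha_k(p),\beta_k(p),\dots$ for which the shifted periodicity
\[
F_q^{(p)}(x)=F_0^{(p+1)}(x)
\]
holds. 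Once guessed, this identity is a single rational-function equality in $x$ and $p$, and can be verified mechanically; this is exactly the scheme applied successfully for $r\le 4$ in \cite{J.Cigler-Catalan} and extended via shifted periodicity in \cite{Y. Wang and G. Xin}.

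Second, one reads off the Hankel determinants: each application of $\tau$ in Sulanke--Xin's bookkeeping multiplies $H_n$ by an explicit factor (a power of the leading coefficient from $\tau$ combined with the diagonal entries produced when $\tau$ is viewed in $J$-fraction form), so iterating one period produces an explicit product expression for the ratios $H_{rn+j}(F(x,r))/H_{r(n-1)+j}(F(x,r))$ for odd $r$, and similarly for $H_{tn+j}(F(x,2t))$ when $r=2t$. Telescoping over $n$ periods yields the claimed polynomial-in-$n$ formulas of Theorems \ref{example-F5}--\ref{example-F8} and of Conjectures \ref{Cigler-conj-odd} and \ref{Cigler-conj-even}, while the polynomial degrees $(j-1)(r-2j)$ and $(2j-1)(t-j)$ of Conjectures \ref{conj-odd} and \ref{conj-even} are obtained by tracking the degrees in $p$ of $\alpha_k(p),\beta_k(p),\dots$ over one full period.

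The main obstacle is twofold. On the conceptual side, guessing the correct normalization of $F_0^{(p)}(x)$ and the correct period $q(r)$ is nontrivial: as $r$ grows, the iterates $F_k^{(p)}(x)$ balloon in algebraic complexity, and without a good ansatz for $F_0^{(p)}$ the coefficients appear random. Experimentation with different normalizations (linear changes of variable in $x$, scaling by powers of $p$, etc.) will be required until the periodicity becomes visible. On the technical side, several of the determinants vanish at isolated values of $n$ (e.g.\ $H_{5n+3}(F(x,5))=0$ and $H_{3n+2}(F(x,3))=0$), which forces $\tau$ through degenerate steps whose bookkeeping must be handled carefully so that the resulting polynomial identities extend cleanly across these zeros; this is precisely the situation for which Gessel--Xin's rules, rather than classical $J$- or $S$-fractions, are needed. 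Once the shifted periodic continued fraction for each $r\le 31$ is in hand, however, the comparison with the four conjectured formulas is purely mechanical, and the theorem follows.
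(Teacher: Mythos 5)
Your proposal follows essentially the same route as the paper: guess a one-parameter family of quadratic functional equations forming a shifted periodic continued fraction under Sulanke--Xin's $\tau$, verify the periodicity as a rational identity in $x$ and $p$, read off the per-step multiplicative factors for $H_n$ from the transformation's bookkeeping, and telescope to obtain the closed forms and their degrees in $n$. The only cosmetic differences are indexing (the paper runs one preliminary $\tau$-step before the periodic family begins at $F_1^{(p)}$) and your attribution of the $r\le 4$ cases to this method (Cigler used orthogonal polynomials there), neither of which affects the argument.
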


The paper is organized as follows.
Section \ref{sec:sulanke-xin-trans} introduces Sulanke-Xin's continued fraction method, especially the quadratic transformation $\tau$.
In Section \ref{subsec-func-F} we explain how to find the functional equation of $F(x,r)=C(x)^r$ for specific $r$. We can then guess explicit functional equations of $F(x,r)$ for
general $r$. Details are put in Appendix \ref{f-e-proof}.
Section \ref{sec-HF-odd} devotes to evaluating the Hankel determinants of $F(x,r)$ for odd $r$ up to $9$. The mail tool is Sulanke-Xin's continued fraction method.
We give detailed steps on the transformation $\tau$ for small $r$.
Section \ref{sec-HF-even} deals with the even $r$ case for $r\le 8$ in a similar way. For both odd $r$ and even $r$, the formulas suggests a polynomial characterization
of the Hankel determinants, as stated in Conjectures \ref{conj-odd} and \ref{conj-even}.
In Section \ref{sec-HG}, we consider the Hankel determinants $\left(\binom{2(i+j)+r}{i+j} \right)_{0\le i,j\le n-1}$, which have been studied in \cite{ J.Cigler-Mike,Omer Egecioglu, Omer Egecioglu 0}.
The corresponding generating function is $G(x,r)=\frac{C(x)^r}{1-2xC(x)}$.
We obtain similar results as for $F(x,r)$. Finally, Appendix \ref{s-func-eq} includes the explicit functional equations of $F(x,r)$ and $G(x,r)$, as well as their proofs.

\section{Preliminary}

We have two goals in this section. Firstly we will introduce the continued fraction method of Sulanke and Xin, especially their quadratic transformation $\tau$ in \cite{R.Sulanke and Xin}.
This is the main tool of this paper. Secondly we explain how to find the functional equation of $F(x,r)=C(x)^r$, which is the starting point
for applying the transformation $\tau$.

\subsection{Sulanke-Xin's quadratic transformation $\tau$\label{sec:sulanke-xin-trans}}
This subsection is copied from \cite{Y. Wang and G. Xin}. We include it here for reader's convenience.

Suppose the generating function $F(x)$ is the unique solution of a quadratic functional equation which can be written as
\begin{gather}
  F(x)=\frac{x^d}{u(x)+x^kv(x)F(x)},\label{xinF(x)}
\end{gather}
where $u(x)$ and $v(x)$ are rational power series with nonzero constants, $d$ is a nonnegative integer, and $k$ is a positive integer.
We need the unique decomposition of $u(x)$ with respect to $d$: $u(x)=u_L(x)+x^{d+2}u_H(x)$ where $u_L(x)$ is a polynomial of degree at most $d+1$ and $u_H(x)$ is a power series.
Then Propositions 4.1 and 4.2 of \cite{R.Sulanke and Xin} can be summarized as follows.
\begin{prop}\label{xinu(0,i,ii)}
Let $F(x)$ be determined by  \eqref{xinF(x)}. Then the quadratic transformation $\tau(F)$ of $F$ defined as follows gives close connections
between   $H(F)$ and $H(\tau(F))$.
\begin{enumerate}
\item[i)] If $u(0)\neq1$, then $\tau(F)=G=u(0)F$ is determined by $G(x)=\frac{x^d}{u(0)^{-1}u(x)+x^ku(0)^{-2}v(x)G(x)}$, and $H_n(\tau(F))=u(0)^{n}H_n(F(x))$;

\item[ii)] If $u(0)=1$ and $k=1$, then $\tau(F)=x^{-1}(G(x)-G(0))$, where $G(x)$ is determined by
$$G(x)=\frac{-v(x)-xu_L(x)u_H(x)}{u_L(x)-x^{d+2}u_H(x)-x^{d+1}G(x)},$$
and we have
$$H_{n-d-1}(\tau(F))=(-1)^{\binom{d+1}{2}}H_n(F(x));$$

\item[iii)] If $u(0)=1$ and $k\geq2$, then $\tau(F)=G$, where $G(x)$ is determined by
$$G(x)=\frac{-x^{k-2}v(x)-u_L(x)u_H(x)}{u_L(x)-x^{d+2}u_H(x)-x^{d+2}G(x)},$$
and we have
$$H_{n-d-1}(\tau(F))=(-1)^{\binom{d+1}{2}}H_n(F(x)).$$\label{xinu(0)(ii)}

\end{enumerate}

\end{prop}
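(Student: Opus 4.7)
The plan is to prove Proposition \ref{xinu(0,i,ii)} by translating the quadratic equation \eqref{xinF(x)} into sequences of elementary row and column operations on the Hankel matrix of $F$, following the Gessel-Xin two-variable generating function framework that \cite{R.Sulanke and Xin} specializes. Package the coefficients of $F(x) = \sum_{n \ge 0} a_n x^n$ into the bivariate series
\[
\mathbf{F}(x,y) = \frac{xF(x) - yF(y)}{x-y} = \sum_{i,j \ge 0} a_{i+j}\, x^i y^j,
\]
whose leading $n \times n$ block is the Hankel matrix with determinant $H_n(F)$. In the Gessel-Xin calculus, multiplying $\mathbf{F}$ by a power series in $x$ with constant term $1$ corresponds to an invertible unit-triangular row operation and preserves every $H_n$; a polynomial prefactor of degree $d+1$ in $x$ corresponds to a truncation of the matrix by $d+1$ rows and contributes a Vandermonde sign $(-1)^{\binom{d+1}{2}}$; the symmetric statements hold in $y$ for columns.

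Part i) is direct: if $G = u(0) F$, then every Hankel entry scales by $u(0)$, so $H_n(G) = u(0)^n H_n(F)$; dividing \eqref{xinF(x)} through by $u(0)$ yields the displayed quadratic equation for $G$.

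For parts ii) and iii), normalize so that $u(0) = 1$ and write $u(x) = u_L(x) + x^{d+2} u_H(x)$ with $\deg u_L \le d+1$. Multiply \eqref{xinF(x)} through by $u(x)$ to get $u(x) F(x) + x^k v(x) F(x)^2 = x^d$, and pass to the bivariate picture to extract a relation of the shape $u(x)\,\mathbf{F}(x,y) = (\text{low-order terms in } y) + x^k v(x) F(x)\,\mathbf{F}(x,y) + (\text{$y$-analog})$. The high part $x^{d+2} u_H$ acts as a unit-triangular modification of rows at depth $\ge d+2$ and can be absorbed into the new equation without changing any $H_n$; the low part $u_L$ of degree $\le d+1$ drives the genuine row reduction that kills the first $d+1$ rows, producing the shift $n \mapsto n-d-1$ and the sign $(-1)^{\binom{d+1}{2}}$. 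Performing the analogous column reduction in $y$ and isolating the surviving quadratic relation produces a new equation of the shape \eqref{xinF(x)} for $\tau(F) = G$, and the explicit rational formulas in ii) and iii) are exactly what this procedure outputs.

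The main obstacle will be the bookkeeping of the interaction between the prefactor $x^k$ and the row-reduction step, since this is what distinguishes cases ii) and iii). When $k = 1$, one factor of $x$ is consumed during the reduction, which is why the transformation must be defined by $\tau(F) = x^{-1}(G(x) - G(0))$ to kill the resulting spurious constant term; when $k \ge 2$ the reduction completes without absorbing a factor of $x$ and one may set $\tau(F) = G$ directly. Parallel care is required to confirm that the resulting $G$ again fits the template \eqref{xinF(x)} as a power series with rational $u, v$ having nonzero constant terms, so that $\tau$ can be iterated; this reduces to a short check using $u(0) = 1$ and the nonvanishing constant term of $v$.
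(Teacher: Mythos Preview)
The paper does not prove this proposition. Section~\ref{sec:sulanke-xin-trans} opens by stating that the subsection is copied from \cite{Y. Wang and G. Xin} for the reader's convenience, and the proposition itself is presented as a summary of Propositions~4.1 and~4.2 of Sulanke--Xin \cite{R.Sulanke and Xin}, with no accompanying proof. So there is nothing in the paper to compare your attempt against.

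That said, your outline is faithful to the argument in the original source \cite{R.Sulanke and Xin}: the Gessel--Xin bivariate series $\mathbf{F}(x,y)=(xF(x)-yF(y))/(x-y)$, the interpretation of multiplication by unit power series as row/column operations preserving the Hankel minors, the role of the polynomial part $u_L$ in killing $d+1$ rows and producing the sign $(-1)^{\binom{d+1}{2}}$, and the bookkeeping distinction between $k=1$ and $k\ge 2$ are exactly the ingredients used there. As written it is a plan rather than a proof---the explicit derivation of the quadratic equations for $G$ in parts~ii) and~iii) is asserted rather than computed, and the verification that the output again has the shape \eqref{xinF(x)} (in particular that the new $u$ and $v$ have nonzero constant terms) is deferred to ``a short check''---but the strategy is correct and matches what a reader would find in \cite{R.Sulanke and Xin}.
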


\subsection{The functional equation of $F(x,r)$}\label{subsec-func-F}
Our starting point is the well-known functional equation for Catalan generating function.
\begin{align}
  \label{e-func-C(x)}
  C(x)=1+x C(x)^2 \quad\Leftrightarrow \quad xC(x)^2-C(x)+1=0\quad\Leftrightarrow \quad C(x)=\frac{1}{1-xC(x)}.
\end{align}

A classical theory in Algebra asserts that for any rational function $R(x)$,
$R(C(x))$ can be uniquely written as $\alpha(x) + \beta(x) C(x)$ for some rational functions $\alpha(x)$ and
$\beta(x)$. Hence $R(C(x))$ satisfies a quadratic functional equation, so does $F(x,r)=C(x)^r$.

Using the method of undetermined coefficients, one can easily obtain the functional equation of $F(x,r)$ with the help of Maple.
For odd integer $r$, the first several functional equations are as follows.
\begin{gather*}
 F(x,3) =- \frac1{{x}^{3}\,F(x,3)+3\,x-1 }, \\
  F(x,5)=- \frac1{{x}^{5}\,F(x,5)-5\,{x}^{2}+5\,x-1 },\\
  F(x,7)=- \frac1{{x}^{7}\,F(x,7)+7\,{x}^{3}-14\,{x}^{2}+7\,x-1 },\\
  F(x,9)=- \frac1{{x}^{9}\,F(x,9)-9\,{x}^{4}+30\,{x}^{3}-27\,{x}^{2}+9\,x-1 },\\
 F(x,11)=- \frac1{{x}^{11}\,F(x,11)+11\,{x}^{5}-55\,{x}^{4}+77\,{x}^{3}-44\,{x}^{2}+11
\,x-1 }.
\end{gather*}
For even integer $r$, the first several functional equations are as follows.
\begin{gather*}
 F(x,2) =- \frac1{{x}^{2}\,F(x,2)+2\,x-1 }, \\
  F(x,4)=- \frac1{{x}^{4}\,F(x,4)-2\,{x}^{2}+4\,x-1 },\\
  F(x,6)=- \frac1{{x}^{6}\,F(x,6)+2\,{x}^{3}-9\,{x}^{2}+6\,x-1 },\\
  F(x,8)=- \frac1{{x}^{8}\,F(x,8)-2\,{x}^{4}+16\,{x}^{3}-20\,{x}^{2}+8\,x-1 },\\
 F(x,10)=- \frac1{{x}^{10}\,F(x,10)+2\,{x}^{5}-25\,{x}^{4}+50\,{x}^{3}-35\,{x}^{2}+10\,
x-1 }.
\end{gather*}
Though these formula are sufficient for our purpose, the above formulas are so nice that we guess and prove
explicit functional equation of $F(x,r)$ for general $r$. See Appendix \ref{f-e-proof} for details.

\section{The results of $H_{n}(F(x,r))$ for odd $r=2t +1$}\label{sec-HF-odd}
In this section, we evaluate the Hankel determinants of $F(x,r)=C(x)^r$ for odd $r=2t+1$.

\subsection{The results of $F(x,3)$}
For $r=3$, we have the functional equation
\begin{align*}
   F(x,3)=\frac{1}{1-3x -{x}^{3}F(x,3) }.
\end{align*}
\begin{proof}[Proof of Theorem \ref{example-F3}]
Let $F_0(x)=F(x,3)$. We apply Proposition \ref{xinu(0,i,ii)} to obtain $F_1=\tau (F_0)$.
Firstly, $d=0$. $u(x)=1-3x$, thus $u_L(x)=1-3x$ and $u_H(x)=0$. Then by $u(0)^{-1}=1$, we obtain
\begin{align*}
   H_k(F_0)&=H_{k-1}(F_1) \ \ &F_1(x)=\frac{x}{1-3x -{x}^{2} F_1(x)}.
\end{align*}
Apply Proposition \ref{xinu(0,i,ii)} to obtain $F_2=\tau (F_1)$.
This time $d=1$. $u(x)=1-3x$, thus $u_L(x)=1-3x$ and $u_H(x)=0$. Then by $u(0)^{-1}=1$, we obtain
\begin{align*}
   H_{k-1}(F_1)&=(-1)^{\binom{2}{2}}H_{k-3}(F_2)=-H_{k-3}(F_2) \ \ &F_2(x)=\frac{1}{1-3x -{x}^{3} F_2(x)}.
\end{align*}
Now we see that $F_2=F_0$. So the continued fractions is periodic of order $2$: $F_0 \to F_1 \to F_2=F_0.$
By summarizing the above results, we obtain  the recursion $H_k(F_0)=-H_{k-3}(F_0)$, which implies  that
$$H_{3n+i}(F_0)=(-1)^{n}H_i(F_0),\quad \text{ for } i=0,1,2 .$$

The initial values are $H_0(F_0)=1,\ H_1(F_0)=1,\ H_2(F_0)=0.$ Theorem \ref{example-F3} then follows.
\end{proof}
\subsection{The results of $F(x,5)$}For $r=5$, we have the functional equation
\begin{align*}
   F(x,5)=\frac{1}{1-5x +5x^2-{x}^{5}F(x,5) }.
\end{align*}
\begin{proof}[Proof of Theorem \ref{example-F5}]
We apply Proposition \ref{xinu(0,i,ii)} to $F_0:=F(x,5)$ by repeatedly using the transformation $\tau$. This results in a shifted periodic continued fractions of order $4$:
$$F_0(x) \mathop{\longrightarrow}\limits^\tau F^{(1)}_1(x)\mathop{\longrightarrow}\limits^\tau F^{(1)}_2(x)\mathop{\longrightarrow}\limits^\tau F^{(1)}_3(x)\mathop{\longrightarrow}\limits^\tau F^{(1)}_4(x)\mathop{\longrightarrow}\limits^\tau F^{(1)}_5(x)= F^{(2)}_{1}(x) \cdots.$$
We will carry out the details in this computation.

Apply Proposition \ref{xinu(0)(ii)} to obtain $F_1^{(1)}=\tau (F_0)$. We obtain
\begin{align}
  H_k(F_0)=H_{k-1}(F_1^{(1)}).\label{e-5F0}
\end{align}
Computer experiment suggests us to define, for $p\geq1$,
$$F^{(p)}_{1}=-{\frac {{x}^{3}+5^2\,p \left( p-1 \right) {x}^{2}+ 5^2\,p\,x-5\,p}{x^2\, F^{(p)}_{1}+ 5\left(2\,p-1 \right) {x}^{2}+5\,x-1}}.$$
Apply Proposition \ref{xinu(0,i,ii)} to obtain $F^{(p)}_{2}=\tau (F^{(p)}_{1})$.
Firstly, $d=0$. Next we need to decompose $u(x)$ with respect to $d$.
We expand $u(x)$ as a power series and focus on (by displaying) those terms  with small exponents ($\le d+1=1$):
$$u(x)=-\frac{5\left(2\,p-1 \right) {x}^{2}+5\,x-1}{{x}^{3}+5^2\,p \left( p-1 \right) {x}^{2}+ 5^2\,p\,x-5\,p}
=-\frac1{5p}+{x}^{2}+\frac1{25}\,{\frac {125\,{p}^{2}-1}{{p}^{2}}}{x}^{3}
+\cdots
,$$
Thus
$u_L(x)=\displaystyle-\frac1{5p}$ is simple, and
$ u_H(x)=\displaystyle-\frac15\,{\frac {25\,{p}^{2}-x}{ \left( {x}^{3}+25\,p \left( p-1 \right)
{x}^{2}+25\,px-5\,p \right) p}}.$
Then by $u(0)^{-1}=-5p$, we obtain
\begin{align*}
 & H_{k-1}(F^{(p)}_{1})=\left(-5\,p\right)^{k-1}H_{k-2}(F^{(p)}_{2}),\\
  &F^{(p)}_{2}=-{\frac {x}{{x}^{2} \left( {x}^{3}+5^2\,p \left( p-1 \right) {x}^{2}+5^2\,p x-5\,p\right) F^{(p)}_{2}-2\,{x}^{3}+5^2\,p{x}^{2}-5^2\,xp+5\,p}}.
\end{align*}
Apply Proposition  \ref{xinu(0,i,ii)} to obtain $F^{(p)}_{3}=\tau (F^{(p)}_{2})$,
This time $d=1$ and $u(x)$ is indeed a polynomial: $$u(x)=2\,{x}^{3}-5^2\,p{x}^{2}+5^2\,xp-5\,p.$$
It then follows that $u_L(x)=-5^2\,p{x}^{2}+5^2\,xp-5\,p,$
$u_H(x)=2\,{x}.$
Then by $u(0)^{-1}=-\dfrac{1}{5\,p}$, we obtain
\begin{align*}
&H_{k-2}(F^{(p)}_{2})=(-1)^{\binom{2}{2}}\left(-\frac{1}{5\,p}\right)^{k-2}H_{k-4}(F^{(p)}_{3}),\\
&F^{(p)}_{3}=-\frac15\cdot\,{\frac {{x}^{3}+5^2\,p \left( p+1 \right) {x}^{2}-5^2\,xp+5\,p}{
 \left( 5\,p{x}^{3}F^{(p)}_{3}-2\,{x}^{3}-5^2\,p{x}^{2}+5^2\,xp-5\,p \right) p}}.
\end{align*}
Apply Proposition  \ref{xinu(0,i,ii)} to obtain $F^{(p)}_{4}=\tau (F^{(p)}_{3})$,
This time $d=0$ and $u(x)$ is expanded as:
$$u(x)=-\frac{5(-2\,{x}^{3}-5^2\,p{x}^{2}+5^2\,xp-5\,p)p}{{x}^{3}+5^2\,p \left( p+1 \right) {x}^{2}-5^2\,xp+5\,p}=5\,p-25\,{p}^{2}{x}^{2}+\cdots.$$
It then follows that $u_L(x)=5\,p,$
$u_H(x)=-5\,{\dfrac {p \left( 25\,{p}^{2}-x \right) }{{x}^{3}+25\,p \left( p+1
 \right) {x}^{2}-25\,px+5\,p}}
.$
Thus by $u(0)^{-1}=\dfrac{1}{5\,p}$, we obtain
\begin{align*}
   &H_{k-4}(F^{(p)}_{3})=\left(\frac{1}{5\,p}\right)^{k-4}H_{k-5}(F^{(p)}_{4}),\\
 & F^{(p)}_{4}=-{\frac {{5^2p^2}}{{x}^{2} \left({x}^{3}+5^2\,p \left( p+1 \right) {x}^{2}-5^2\,p x+5\,p\right) F^{(p)}_{4}-5\,p \left( 5(2\,p+1){x}^{2}-5\,x+
1 \right) }}
.
\end{align*}
Apply Proposition  \ref{xinu(0,i,ii)} to obtain $F^{(p)}_{5}=F^{(p+1)}_{1}=\tau (F^{(p)}_{4})$,
This time $d=0$ and $u(x)$ is indeed a polynomial:
$$u(x)=\frac{5(2\,p+1){x}^{2}-5\,x+
1 }{5\,p}.$$
It then follows that $u_L(x)=\dfrac{-5\,x+
1}{5\,p},$
$u_H(x)=\dfrac {2\,p+1 }{p}
.$
Then by $u(0)^{-1}=u(0)^{-1}=5p$, we obtain
\begin{align*}
  H_{k-5}(F^{(p)}_{4})&=(5p)^{k-5}H_{k-6}(F^{(p+1)}_{1}).
\end{align*}
Combining the above formulas gives the recursion
\[H_{k-1}(F^{(p)}_{1})=H_{k-6}(F^{(p+1)}_{1}).\]
Let $k-1=5n+j$, where $0\leq j< 5$. We then deduce that
\begin{gather}
  H_{5n+j}(F_1^{(1)})=H_{j}(F^{(n+1)}_{1}).\label{e-5F5n}
\end{gather}
The initial values are
\begin{gather*}
 H_{0}(F^{(n+1)}_{1})=1,H_{1}(F^{(n+1)}_{1})=-H_{3}(F^{(n+1)}_{1})=-5(n+1),H_{2}(F^{(n+1)}_{1})=0,H_{4}(F^{(n+1)}_{1})=1.
\end{gather*}
The theorem then follows by the above initial values, \eqref{e-5F0} and \eqref{e-5F5n}.
\end{proof}

\subsection{The results of $H_n(F(x,7))$}For $r=7$, we have the functional equation
\begin{align*}
   F(x,7)=\frac1{ 1- 7\,x+14\,{x}^{2} -7\,{x}^{3}-{x}^{7}F(x,7) }.
\end{align*}
\begin{proof}[Proof of Theorem \ref{example-F7}]
We apply Proposition \ref{xinu(0,i,ii)} to $F_0:=F(x,7)$, and repeat the transformation $\tau$.
This results in a shifted periodic continued fractions of order $6$:
\begin{align*}
  F_0(x) &\mathop{\longrightarrow}\limits^\tau F^{(1)}_1(x)\mathop{\longrightarrow}\limits^\tau F^{(1)}_2(x)\mathop{\longrightarrow}\limits^\tau F^{(1)}_3(x)\mathop{\longrightarrow}\limits^\tau F^{(1)}_4(x)\mathop{\longrightarrow}\limits^\tau F^{(1)}_5(x)\mathop{\longrightarrow}\limits^\tau F^{(1)}_6(x)\\& \mathop{\longrightarrow}\limits^\tau F^{(1)}_7(x)= F^{(2)}_{1}(x)\cdots.
\end{align*}
If we define
\begin{align*}
 F_1^{(p)}&=-[{x}^{5}+49\, \left( p-1 \right) p{x}^{4}+{ {49}/{6}}\,
 \left( p-1 \right) p \left( 196\,p ^{2}-196\,p+25
 \right) {x}^{3}+{ {49}/{36}}\,p (9604\,{p}^{5}-28812\,{p}^{4}\\
 &+28861\,{p}^{3}-9702\,{p}^{2}-395\,p+408) {x}^{2}-7/6\,p \left(
686\,{p}^{2}-1029\,p+253 \right) x+7/6\,p ( 98\,{p}^{2}\\
&-147\,p+37 )]/[F_1^{(p)}{x}^{2}-7
\, \left( 2\,p-1 \right) {x}^{3}-7/3\, \left( 2\,p-1 \right)(
49\,{p}^{2}-49\,p-6) {x}^{2}+7\,x-1 ],
\end{align*}
Then the results can be summarized as follows:
\begin{align}
  H_k(F_0)=H_{k-1}(F_1^{(1)}),\label{e-7F0}
\end{align}
and
\begin{align*}
  H_{k-1}(F^{(p)}_{1})=&\left(\frac76\cdot \,p \left( 98\,{p}^{2}-147\,p+37 \right)\right)^{k-1} H_{k-2}(F^{(p)}_{2}), \\
H_{k-2}(F^{(p)}_{2})=&\left(-\frac{36}{\left( 98\,{p}^{2}-147\,p+37 \right)^{2}}\right)^{k-2}H_{k-3}(F^{(p)}_{3}),\\
 H_{k-3}(F^{(p)}_{3})=&(-1)^{\binom{2}{2}}\left(-\frac1{42}\cdot\,{\frac {98\,{p}^{2}-147\,p+37}{p}}\right)^{k-3}H_{k-5}(F^{(p)}_{4}),\\
 H_{k-5}(F^{(p)}_{4})=&\left(\frac 1{42}\cdot\,{\frac {98\,{p}^{2}+147\,p+37}{p}}\right)^{k-5} H_{k-6}(F^{(p)}_{5}), \\
H_{k-6}(F^{(p)}_{5})=&\left(-\frac{36}{\left( 98\,{p}^{2}+147\,p+37 \right)^{2}}\right)^{k-6}H_{k-7}(F^{(p)}_{6}),\\
 H_{k-7}(F^{(p)}_{6})=&\left(-\frac76\cdot\,p \left( 98\,{p}^{2}+147\,p+37 \right) \right)^{k-7}H_{k-8}(F^{(p+1)}_{1}).
\end{align*}
Combining the above formulas gives the recursion
\[H_{k-1}(F_1^{(p)})=-H_{k-8}(F^{(p+1)}_{1}).\]
Let $k-1=7n+j$, where $0\leq j< 7$. We then deduce that
\begin{gather}
  H_{7n+j}(F_1^{(1)})=(-1)^nH_{j}(F^{(n+1)}_{1}).\label{e-7F7n}
\end{gather}
The initial values are
\begin{align*}
& H_{0}(F^{(n+1)}_{1})=1,\ H_{1}(F^{(n+1)}_{1})=\frac76\, \left( n+1 \right)  \left( 98\,{n}^{2}+49\,n-12 \right) ,\ H_{2}(F^{(n+1)}_{1})=-49\, \left( n+1 \right) ^{2},\\
 &H_{3}(F^{(n+1)}_{1})=0,\ H_{4}(F^{(n+1)}_{1})=49\, \left( n+1 \right) ^{2},\ H_{5}(F^{(n+1)}_{1})=\frac76\, \left( n+1 \right)  \left( 98\,{n}^{2}+343\,n+282 \right),\\
 & H_{6}(F^{(n+1)}_{1})=1.
\end{align*}
%H_{0}(F^{(n+1)}_{1})&=1,&H_{1}(F^{(n+1)}_{1})=1,&H_{2}(F^{(n+1)}_{1})=-5(n+1),&H_{3}(F^{(n+1)}_{1})=0,H_{4}(F^{(n+1)}_{1})=5(n+1).
The theorem then follows by the above initial values, \eqref{e-7F0} and \eqref{e-7F7n}.
\end{proof}
\subsection{The results of $H_n(F(x,9))$}For $r=9$, we have the functional equation
\begin{align*}
   F(x,9)=\frac1{1-9\,x+27\,{x}^{2}-30\,{x}^{3}+9\,{x}^{4}-{x}^{9}F(x,9)}.
\end{align*}

\begin{thm}
\begin{align*}
 H_{9n}(F(x,9))=&H_{9n+1}(F(x,9))=1, \\
 H_{9n+2}(F(x,9))=&-{\frac {27}{10}}\, \left( 3\,n+2 \right)  \left( 18\,n+1 \right)
 \left( n+1 \right)  \left( 54\,{n}^{2}+42\,n+5 \right),\\
 H_{9n+3}(F(x,9))=&{\frac {9}{20}}\, \left( n+1 \right) ^{2} \left( 26244\,{n}^{4}+104976
\,{n}^{3}+108459\,{n}^{2}+31266\,n-1460 \right),\\
 H_{9n+4}(F(x,9))=&-H_{9n+6}(F(x,9))=\left(9( n+1) \right) ^{3},\  H_{9n+5}(F(x,9))=0,\\
 H_{9n+7}(F(x,9))=&{\frac {9}{20}}\, \left( n+1 \right) ^{2} \left( 26244\,{n}^{4}+104976
\,{n}^{3}+108459\,{n}^{2}-17334\,n-50060 \right),\\
 H_{9n+8}(F(x,9))=&{\frac {27}{10}}\, \left( 18\,n+35 \right)  \left( 3\,n+4 \right)
 \left( n+1 \right)  \left( 54\,{n}^{2}+174\,n+137 \right) .
\end{align*}
%\textbf{Note that:} $ H_{9n+3}(F(x,9))- H_{9n+7}(F(x,9))=-30\cdot9^4\cdot(n+1)^3.$
\end{thm}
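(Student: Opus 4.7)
The plan is to follow exactly the template established for $r=3,5,7$ in the preceding subsections, now with periodicity of order $8$. We start from the functional equation
\begin{align*}
F(x,9)=\frac{1}{1-9x+27x^2-30x^3+9x^4-x^9 F(x,9)}
\end{align*}
listed in Section~\ref{subsec-func-F}, set $F_0:=F(x,9)$, and apply Sulanke-Xin's transformation $\tau$ from Proposition~\ref{xinu(0,i,ii)} once to move the $x^9$ down to $x^8$, obtaining a relation of the form $H_k(F_0)=H_{k-1}(F_1^{(1)})$. Then we iterate $\tau$ and expect, based on the $r=3,5,7$ cases, a shifted periodic continued fraction
\begin{align*}
F_0(x)\xrightarrow{\tau} F_1^{(1)}\xrightarrow{\tau}F_2^{(1)}\xrightarrow{\tau}\cdots\xrightarrow{\tau}F_8^{(1)}\xrightarrow{\tau}F_9^{(1)}=F_1^{(2)}\xrightarrow{\tau}\cdots
\end{align*}
of order $8$.

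The first key step is to guess, from Maple experiments, an explicit closed form for the family $F_1^{(p)}(x)$ as a rational function whose coefficients are polynomials in $p$. By analogy with the $r=7$ formula, one expects the denominator of $F_1^{(p)}$ to have the shape $F_1^{(p)}x^2 - (\text{odd polynomial in } 2p-1)\cdot x^{\le 3}+9x-1$, and the numerator to be a polynomial of degree $7$ in $x$ whose coefficients are polynomials in $p$ dictated by the leading Hankel determinants at level $n=p-1$. Once this guess is fixed, I would verify by a direct computation in Maple that applying $\tau$ to $F_1^{(p)}$ step by step produces, after eight iterations, exactly $F_1^{(p+1)}$, at each step reading off the scalar factor $c_j(p)$ in $H_{\cdot}(F_j^{(p)})=c_j(p)^{\text{(exp)}}H_{\cdot}(F_{j+1}^{(p)})$ and the dimension drop from Proposition~\ref{xinu(0,i,ii)}.

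The second step is bookkeeping. After checking all eight transformations, multiplying the scalar factors $c_1(p)\cdots c_8(p)$ together should telescope, as in the $r=7$ case, to $\pm 1$ (the sign being governed by parity of dimension drops, which sum to $9$), giving the clean recursion
\begin{align*}
H_{k-1}(F_1^{(p)})=\varepsilon\, H_{k-9}(F_1^{(p+1)})
\end{align*}
for some explicit sign $\varepsilon\in\{\pm 1\}$. Writing $k-1=9n+j$ with $0\le j<9$, one obtains $H_{9n+j}(F_1^{(1)})=\varepsilon^n H_j(F_1^{(n+1)})$, and hence $H_{9n+j+1}(F_0)=\varepsilon^n H_j(F_1^{(n+1)})$ for $j=0,\dots,8$.

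The final step is to compute the nine initial values $H_0(F_1^{(n+1)}),\dots,H_8(F_1^{(n+1)})$ directly from the closed form of $F_1^{(p)}$ with $p=n+1$. These are the nine polynomial expressions listed in the theorem statement (including the two zero cases and the cubic $(9(n+1))^3$). Combining with $H_k(F_0)=H_{k-1}(F_1^{(1)})$ finishes the proof.

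The main obstacle is the first step: producing the correct closed-form ansatz for $F_1^{(p)}$. For $r=9$ the polynomial coefficients of $p$ occurring in the numerator and denominator are of higher degree than in the $r=7$ case (one sees sextic and septic expressions in $p$ appearing naturally from the degree-$4$ polynomials $H_{9n+2},H_{9n+3},H_{9n+7},H_{9n+8}$). Once the ansatz is right, the verification of the eight individual $\tau$-steps and the telescoping of scalars is mechanical symbolic algebra, identical in spirit to the detailed $r=5$ computation carried out above.
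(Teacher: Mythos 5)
Your proposal follows exactly the paper's own route: the same functional equation, the same order-$8$ shifted periodic continued fraction $F_0\to F_1^{(1)}\to\cdots\to F_1^{(2)}$ obtained by iterating $\tau$, the guessed closed form for $F_1^{(p)}$, the telescoping of the eight scalar factors to the recursion $H_{k-1}(F_1^{(p)})=H_{k-10}(F_1^{(p+1)})$ (note your ``$H_{k-9}$'' is an off-by-one slip — the total index drop per period is $9$, as your subsequent formula $H_{9n+j}(F_1^{(1)})=\varepsilon^n H_j(F_1^{(n+1)})$ correctly assumes, and in fact $\varepsilon=+1$ here), and the evaluation of the nine initial values. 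This matches the paper's proof in all essentials.
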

\begin{proof}
We apply Proposition \ref{xinu(0,i,ii)} to $F_0:=F(x,9)$, and repeat the transformation $\tau$.
This results in a shifted periodic continued fractions of order $8$:
\begin{align*}
  F_0(x) &\mathop{\longrightarrow}\limits^\tau F_1^{(1)}(x)\mathop{\longrightarrow}\limits^\tau F^{(1)}_{2}(x)\mathop{\longrightarrow}\limits^\tau F^{(1)}_{3}(x)\mathop{\longrightarrow}\limits^\tau F^{(1)}_{4}(x)\mathop{\longrightarrow}\limits^\tau F^{(1)}_{5}(x)\mathop{\longrightarrow}\limits^\tau F^{(1)}_{6}(x)\\& \mathop{\longrightarrow}\limits^\tau F^{(1)}_{7}(x)
  \mathop{\longrightarrow}\limits^\tau F^{(1)}_{8}(x)\mathop{\longrightarrow}\limits^\tau F_9^{(1)}(x)= F_2^{(1)}(x) \cdots.
\end{align*}
The formula of $F^{(p)}_1(x)$ is too complicated to print.
Then the results can be summarized as follows:
\begin{align}
  H_k(F_0)=H_{k-1}(F_1^{(1)}),\label{e-9F0}
\end{align}
and
\begin{align*}
  H_{k-1}(F^{(p)}_{1})=&\left(-{\frac {27}{10}}\, \left( 3\,p-1 \right)  \left( 18\,p-17 \right) p
 \left( 54\,{p}^{2}-66\,p+17 \right)\right)^{k-1} H_{k-2}(F^{(p)}_{2}), \\
H_{k-2}(F^{(p)}_{2})=&\left({\frac {5}{81}}\,{\frac {26244\,{p}^{4}-49005\,{p}^{2}+24300\,p-2999}{
 \left( 3\,p-1 \right) ^{2} \left( 18\,p-17 \right) ^{2} \left( 54\,{p
}^{2}-66\,p+17 \right) ^{2}}}\right)^{k-2}H_{k-3}(F^{(p)}_{3}),\\
 H_{k-3}(F^{(p)}_{3})=&\left(-9720\,{\frac { \left( 3\,p-1 \right)  \left( 18\,p-17 \right)
 \left( 54\,{p}^{2}-66\,p+17 \right) }{ \left( 26244\,{p}^{4}-49005\,{
p}^{2}+24300\,p-2999 \right) ^{2}}}\right)^{k-3}H_{k-4}(F^{(p)}_{4}),\\
 H_{k-4}(F^{(p)}_{4})=&\displaystyle(-1)^{\binom{2}{2}}\left({\frac {1}{1620}}\,{\frac {26244\,{p}^{4}-49005\,{p}^{2}+24300\,p-2999
}{p}}\right)^{k-4} H_{k-6}(F^{(p)}_{5}), \\
H_{k-6}(F^{(p)}_{5})=&\left(-{\frac {1}{1620}}\,{\frac {26244\,{p}^{4}-49005\,{p}^{2}-24300\,p-
2999}{p}}
\right)^{k-6}H_{k-7}(F^{(p)}_{6}),\\
 H_{k-7}(F^{(p)}_{6})=&\left(-9720\,{\frac { \left( 18\,p+17 \right)  \left( 3\,p+1 \right)
 \left( 54\,{p}^{2}+66\,p+17 \right) }{ \left( 26244\,{p}^{4}-49005\,{
p}^{2}-24300\,p-2999 \right) ^{2}}}
 \right)^{k-7}H_{k-8}(F_{7}^{(p)}),\\
  H_{k-8}(F_{7}^{(p)})=&\left({\frac {5}{81}}\,{\frac {26244\,{p}^{4}-49005\,{p}^{2}-24300\,p-2999}{
 \left( 18\,p+17 \right) ^{2} \left( 3\,p+1 \right) ^{2} \left( 54\,{p
}^{2}+66\,p+17 \right) ^{2}}}
\right)^{k-8} H_{k-9}(F_{8}^{(p)}), \\
H_{k-9}(F_8^{(p)})=&\left({\frac {27}{10}}\, \left( 18\,p+17 \right)  \left( 3\,p+1 \right) p
 \left( 54\,{p}^{2}+66\,p+17 \right)
\right)^{k-9}H_{k-10}(F^{(p+1)}_{1}).
\end{align*}
%It can be seen that the determinants of $F^{(p)}_{1}$ have a period of $9$.
Combining the above formulas gives the recursion
\[H_{k-1}(F_1^{(p)})=H_{k-10}(F^{(p+1)}_{1}).\]
Let $k-1=9n+j$, where $0\leq j< 9$. We then deduce that
\begin{gather}
  H_{9n+j}(F_1^{(1)})=H_{j}(F^{(n+1)}_{1}).\label{e-9F9n}
\end{gather}
The initial values are
\begin{align*}
 H_{0}(F^{(n+1)}_{1})=&1,\  H_{1}(F^{(n+1)}_{1})=-{\frac {27}{10}}\, \left( 3\,n+2 \right)  \left( 18\,n+1 \right)
 \left( n+1 \right)  \left( 54\,{n}^{2}+42\,n+5 \right),\\
 H_{2}(F^{(n+1)}_{1})=&{\frac {9}{20}}\, \left( n+1 \right) ^{2} \left( 26244\,{n}^{4}+104976
\,{n}^{3}+108459\,{n}^{2}+31266\,n-1460 \right),\\
H_{3}(F^{(n+1)}_{1})=& \left(9( n+1) \right) ^{3}, \quad H_{4}(F^{(n+1)}_{1})=0, \quad H_{5}(F^{(n+1)}_{1})=-\left(9( n+1) \right) ^{3},\\
  H_{6}(F^{(n+1)}_{1})=&{\frac {9}{20}}\, \left( n+1 \right) ^{2} \left( 26244\,{n}^{4}+104976
\,{n}^{3}+108459\,{n}^{2}-17334\,n-50060 \right),\\
 H_{7}(F^{(n+1)}_{1})=&{\frac {27}{10}}\, \left( 18\,n+35 \right)  \left( 3\,n+4 \right)
 \left( n+1 \right)  \left( 54\,{n}^{2}+174\,n+137 \right),\  H_{8}(F^{(n+1)}_{1})=1.
\end{align*}
The theorem then follows by the above initial values, \eqref{e-9F0} and \eqref{e-9F9n}.
\end{proof}
\subsection{A summary} A computer program has been developed for calculating $H_n(F(x,r))$ for $r=3,5,7,\dots$. We succeed for $r$ up to $31$
because the coefficients appearing in the functional equation for $F_1^{(p)}(x)$ are all polynomials in $p$ of a reasonable small degree. This allows us to guess an explicit
formula, and then the rest is straightforward.

From the obtained formulas, we find that $\pm H_{rn+i}(F(x,r))$ is a polynomial in $n$ for fixed $r$ and $i$. The degrees are listed in Table 1.
\begin{table}[H] % [h]表格在文中放置的位置
  \centering  %作用是使表格居中
  %\begin{spacing}  %调整表格行距
  %\resizebox{0.5\hsize}{!}  % 缩小整体表格 需要导包
    {\begin{tabular}{|c|c|c|c|c|c|c|c|c|c|c|c|c|c|c|c|c|c|c|c|}   % c 表示表格中的文字居中
   %\arrayrulecolor{tabcolor}
       \hline
   {\color{red}{\diagbox[dir=SE]{{\color{black}$r$}}{{\color{black}$i$}}}} & $0$ & $1$ & $2$ & $3$ & $4$ & $5$ & $6$ & $7$ & $8$ & $9$ &$10$& $11$&$12$&$13$&$14$&$15$&$16$\\
  \hline
  $r=3$ & 0 & 0 &\textbf{0} &  &  &  &  &  &  &   &  &  &  &  &  &  &      \\
  \hline
  $r=5$ & 0 & 0 & 1 & \textbf{0} & 1 &  &  &  & &    &  &  &  &  &  &  &     \\
  \hline
  $r=7$ & 0 & 0 & 3 & 2 & \textbf{0} & 2 & 3 &  &  &    & &  &  &  &  & &    \\
  \hline
  $r=9$ & 0 & 0 & 5 & 6 & 3 & \textbf{0} & 3 & 6 & 5 &  &  &  &  &  & & &\\
  \hline
  $r=11$ &0 & 0 & 7 & 10 &9 & 4 & \textbf{0} & 4 & 9 & 10  &7 &  &  &  & & &  \\
  \hline
  $r=13$ &0 & 0 & 9 & 14 &15&12 & 5 & \textbf{0} & 5 &12 &15&14 &9 &  & & & \\
  \hline
  $r=15$ &0 & 0 & 11& 18 &21&20 & 15& 6 & \textbf{0} & 6 &15&20 &21 &  18&11  &    &\\
  \hline
  $r=17$ &0 & 0 & 13& 22 &27&28 & 25& 18& 7 & \textbf{0}   & 7 &18 &25 &28&27&22&13    \\
  \hline
    \end{tabular}
    }
    \caption{Up to a sign, $H_{rn+i}(F(x,r))$ is a polynomial in $n$,
    with the corresponding degree given in the table. (The degrees in each row are palindromic centering at the bold faced numbers.)
    For instance,
    the entry in the row indexed by $(r=)11$ and the column indexed by $(i=)4$ is $9$. This means the degree in $n$ of $\pm H_{11n+4}(F(x,11))$ is $9$.  } %表格标题
    %\end{spacing}
\end{table}

From Table 1, we guess a formula for the degrees. This leads to Conjecture \ref{conj-odd}. If the conjecture holds true, then we can use Lagrange's interpolation formula to obtain the formula
of $H_{rn+j}(F(x,r))$. For instance, if $r=31$, then the max degree is $190$ attained at $j=11$. Thus to obtain a formula for $H_{31n+11}(F(x,31))$, it is sufficient to evaluate these determinants for $0\le n \le 190$.
For $n=190$, we need to evaluate a determinant of size 5901. At the beginning, we thought Maple can compute these determinants quickly, but it is not the case, even if we modulo a large prime.
The computation time is given in table \ref{time}, where we can see that the computation of $H_{800}(F(x,31)) \pmod{274177}$ already takes
about $117$ seconds.

\begin{table}[H] % [h]表格在文中放置的位置
  \centering  %作用是使表格居中
  %\begin{spacing}  %调整表格行距
  %\resizebox{0.5\hsize}{!}  % 缩小整体表格 需要导包
    {\begin{tabular}{|c|c|c|c|c|c|c|c|c|c|c|c|c|c|c|c|c|c|c|c|}   % c 表示表格中的文字居中
   %\arrayrulecolor{tabcolor}
       \hline
   {\color{red}{\diagbox[dir=SE]{{\color{black}$r$}}{{\color{black}$n$}}}} & $100$ & $200$ & $300$ & $400$ & $500$ & $600$ & $700$ & $800$ \\
  \hline
  $r=1$ & 0.093 & 0.639 & 2.652 & 7.160 & 16.458 & 32.479 & 59.639 & 103.615   \\
  \hline
  $r=6$ & 0.093 & 0.655 & 2.558 & 7.160 & 16.411 & 31.995 & 60.949 &  103.584 \\
  \hline
  $r=11$ & 0.093 & 0.655 & 2.542 & 7.082 & 16.317 & 31.793 & 57.860 & 97.485 \\
  \hline
  $r=16$ & 0.093 & 0.655 & 2.698 & 7.269 & 16.551 & 32.198 & 58.843 & 97.001\\
  \hline
  $r=21$ &0.093 & 0.639 & 2.636 & 7.098 & 16.348 & 31.715 & 57.813 & 103.225  \\
  \hline
  $r=26$ &0.093 & 0.639 & 2.605 & 7.051 & 16.333 & 31.793 & 58.188 & 99.060 \\
  \hline
  $r=31$ &0.093 & 0.655 & 2.558& 7.441 & 16.567 & 32.229 & 58.016 & 98.124 \\
  \hline
  $r=36$ &0.078 & 0.670 & 2.558 & 7.207 & 16.395 & 32.105 & 58.328 & 99.419 \\
  \hline
  $r=41$ &0.093 & 0.670 & 2.636 & 7.316 & 16.239 & 31.917 & 57.455 & 96.751    \\
   \hline
    \end{tabular}
    }
    \label{time}\caption{Time (in seconds) spent for calculating $H_{n}(F(x,r)) \pmod{274177}$.  } %表格标题
    %\end{spacing}
\end{table}

Indeed, fast evaluation of $H_{rn+i}(F(x,r)$ is still by using our transformation $\tau$. For instance when $r=31$, we start with
the functional equation of $F_0=F(x,31)$ and successively compute $F_i=\tau(F_{i-1})$. The computer can produce
$F_i$ for $i=1$ to $6000$ in $198.480$ seconds. Then we need to record the $(u_0,\ d)$'s for each $F_i$ and obtain $H_n(F(x,31))$ for $n\le 6200$ (some of the determinants are $0$).
Using this approach we are able to verify Conjecture \ref{conj-odd} for $r\le 41$.

\section{The case of even $r=2t $}\label{sec-HF-even}
The computation of the Hankel determinants for even $r$ is similar, but the formulas have a quite different pattern.

\subsection{The results of $H_n(F(x,4))$} For $r=4$, we have the functional equation
\begin{align*}
   F(x,4)=\frac{1}{1-4\,x+2\,{x}^{2}-{x}^{4} F(x,4)}.
\end{align*}

\begin{proof}[Proof of Theorem \ref{example-F4}]
We apply Proposition \ref{xinu(0,i,ii)} to $F_0:=F(x,4)$ by repeatedly using the transformation $\tau$.
This results in a shifted periodic continued fractions of order 2:
$$F_0(x) \mathop{\longrightarrow}\limits^\tau F_1^{(1)}(x)\mathop{\longrightarrow}\limits^\tau F^{(1)}_{2}(x)\mathop{\longrightarrow}\limits^\tau F^{(1)}_{3}(x)=F_1^{(2)}(x) \cdots.$$
If we define
\begin{align*}
 F_1^{(p)}&=\frac{{x}^{2}+4\, p \left( p+1 \right){x}-p \left( p+1 \right)}{p^2-4\,p^2x-2\,px^2-p^2 F_1^{(p)}},
\end{align*}
Then the results can be summarized as follows:
\begin{align}
  H_k(F_0)=H_{k-1}(F_1^{(1)}),\label{e-4F0}
\end{align}
and
\begin{align*}
  H_{k-1}(F^{(p)}_{1})=&\left(-\frac{p+1}{p}\right)^{k-1} H_{k-2}(F^{(p)}_{2}), \\
H_{k-2}(F^{(p)}_{2})=&\left(-\frac{p}{p+1}\right)^{k-2}H_{k-3}(F^{(p+1)}_{1}).
\end{align*}
Combining the above formulas gives the recursion
\[H_{k-1}(F_1^{(p)})=-\left(\frac{p+1}{p}\right)H_{k-3}(F^{(p+1)}_{1}).\]
Let $k-1=2n+j$, where $j=0,1$. We then deduce that
\begin{gather}
  H_{2n+j}(F_1^{(1)})=(-1)^n\left(n+1\right)H_{j}(F^{(n+1)}_{1}).\label{e-4F4n}
\end{gather}
The initial values are
\begin{align*}
 H_{0}(F^{(n+1)}_{1})=&1,\quad H_{1}(F^{(n+1)}_{1})=-\left(\frac{n+2}{n+1}\right).
\end{align*}
%H_{0}(F^{(n+1)}_{1})&=1,&H_{1}(F^{(n+1)}_{1})=1,&H_{2}(F^{(n+1)}_{1})=-5(n+1),&H_{3}(F^{(n+1)}_{1})=0,H_{4}(F^{(n+1)}_{1})=5(n+1).
The theorem then follows by the above initial values, \eqref{e-4F0} and \eqref{e-4F4n}.
\end{proof}
\subsection{The results of $H_n(F(x,6))$}
For $r=6$, we have the functional equation
\begin{align*}
   F(x,6)=\frac{1}{1-6\,x+9\,{x}^{2}-2\,x^3-{x}^{4} F(x,6)}.
\end{align*}
\begin{proof}[Proof of Theorem \ref{example-F6}]
We apply Proposition \ref{xinu(0,i,ii)} to $F_0:=F(x,6)$ by repeatedly using the transformation $\tau$.
This results in a shifted periodic continued fractions of order 3:
\begin{align*}
  F_0(x)& \mathop{\longrightarrow}\limits^\tau F_1^{(1)}(x)\mathop{\longrightarrow}\limits^\tau F^{(1)}_{2}(x)\mathop{\longrightarrow}\limits^\tau F^{(1)}_{3}(x)\mathop{\longrightarrow}\limits^\tau F^{(1)}_{4}(x)= F_2^{(1)}(x)\cdots.
\end{align*}
We obtain
\begin{align}
  H_k(F_0)=H_{k-1}(F_1^{(1)}).\label{e-6F0}
\end{align}
For $p\geq1$. Computer experiment suggests us to define
\begin{align*}
  F^{(p)}_{1} &= ( -\frac23\,{\frac {{x}^{4}}{ \left( 2\,p+1 \right)  \left( p+1
 \right) p}}-2\,{\frac {{x}^{3} \left( p-1 \right) }{ \left( 2\,p+1
 \right) p}}-\frac12\,{\frac {{x}^{2} \left( 2\,p-3 \right)  \left( 6\,
 p^{2}+3\,p-1 \right) }{ \left( 2\,p+1 \right) p}}-
\frac43\,{\frac {x \left( 9\,p+5 \right) }{2\,p+1}}\\
&+1 )/  \large(-\frac43
\,{\frac {{x}^{3}}{ \left( p+1 \right)  \left( 2\,p+1 \right) }}+2\,{
\frac {{x}^{2} \left( 2\, p^{2}+1 \right) }{
 \left( p+1 \right)  \left( 2\,p+1 \right) }}+4\,{\frac {xp}{ \left( p
+1 \right)  \left( 2\,p+1 \right) }}-\frac23\,{\frac {p}{ \left( p+1
 \right)  \left( 2\,p+1 \right) }}\\
 &+\frac23\,{\frac {{x}^{2}F^{(p)}_{1}p}{ \left( p+1
 \right)  \left( 2\,p+1 \right) }}\large).
 \end{align*}
We shall mention a difficulty in guessing the above formula. At the beginning, we failed because we assumed the coefficients to be polynomials in $p$, just like the odd $r$ case, but
there is a cancellation for those coefficients. For instance, the expected $F_7$ is $-\frac1{12}\,{\frac {4\,{x}^{4}+96\,{x}^{3}+2232\,{x}^{2}+3072\,x-504}{3x^2\,F_7-2\,{
x}^{3}+ 57 {x}^{2}+18\,x-3}}$, but Maple will give the cancelled form $F_7=-\frac13\,{\frac {{x}^{4}+24\,{x}^{3}+558\,{x}^{2}+768\,x-126}{3{x}^{2}\,F_7
-2\,{x}^{3}+57\,{x}^{2}+18\,x-3}} $.
After a little thought, we tried rational functions instead and succeed.

Then the results can be summarized as follows:
\begin{align*}
  H_{k-1}(F^{(p)}_{1})=&\left(-\frac32\,\cdot{\frac { \left( p+1 \right)  \left( 2\,p+1 \right) }{p}}\right)^{k-1} H_{k-2}(F^{(p)}_{2}), \\
H_{k-2}(F^{(p)}_{2})=&\left(-\frac49\, \cdot\frac1{\left( 2\,p+1 \right)^{2}}\right)^{k-2}H_{k-3}(F^{(p)}_{3}),\\
 H_{k-3}(F^{(p)}_{3})=&\left(\frac32\,\cdot{\frac { \left( 2\,p+1 \right) p}{p+1}}\right)^{k-3}H_{k-4}(F^{(p+1)}_{1}).
\end{align*}
Combining the above formulas gives the recursion
\[H_{k-1}(F_1^{(p)})=-\left(\frac{p+1}{p}\right)^2H_{k-4}(F^{(p+1)}_{1}).\]
Let $k-1=3n+j$, where $0\leq j< 3$. We then deduce that
\begin{gather}
  H_{3n+j}(F_1^{(1)})=(-1)^n\left(n+1\right)^2H_{j}(F^{(n+1)}_{1}).\label{e-6F6n}
\end{gather}
The initial values are
\begin{align*}
 H_{0}(F^{(n+1)}_{1})=&1,\quad H_{1}(F^{(n+1)}_{1})=-\frac32\,\cdot{\frac { \left( n+2 \right)  \left( 2\,n+3 \right) }{n+1}},\quad H_{2}(F^{(n+1)}_{1})=-{\frac { \left( n+2 \right) ^{2}}{ \left( n+1 \right) ^{2}}}.
\end{align*}
By the above initial values, \eqref{e-6F0} and \eqref{e-6F6n}, we obtain the theorem.
\end{proof}

\subsection{The results of $H_n(F(x,8))$}
For $r=8$, we have the functional equation
\begin{align*}
   F(x,8)=\frac{1}{1-8\,x+9\,{x}^{2}-2\,x^3-{x}^{4} F(x,8)}.
\end{align*}
\begin{proof}[Proof of Theorem \ref{example-F8}]
We apply Proposition \ref{xinu(0,i,ii)} to $F_0:=F(x,8)$ by repeatedly using the transformation $\tau$.
This results in a shifted periodic continued fractions of order 4:
\begin{align*}
  F_0(x)& \mathop{\longrightarrow}\limits^\tau F_1^{(1)}(x)\mathop{\longrightarrow}\limits^\tau F^{(1)}_{2}(x)\mathop{\longrightarrow}\limits^\tau F^{(1)}_{3}(x)\mathop{\longrightarrow}\limits^\tau F^{(1)}_{4}(x)\mathop{\longrightarrow}\limits^\tau F^{(1)}_{5}(x)= F_2^{(1)}(x)\cdots.
\end{align*}
We obtain
\begin{align}
  H_k(F_0)=H_{k-1}(F_1^{(1)}).\label{e-8F0}
\end{align}
For $p\geq1$. Computer experiment suggests us to define
\begin{align*}
&F^{(p)}_{1} =-\frac1{45}(2025\,{x}^{6}+10800\,{x}^{5} \left( p-1 \right)
 \left( p+1 \right) +540\,{x}^{4} \left( 64\,{p}^{2}-41 \right)
 \left( p-1 \right)  \left( p+1 \right) \\
 &+120\,{x}^{3} \left( 1024\,{p}
^{5}-1024\,{p}^{4}-944\,{p}^{3}+944\,{p}^{2}+307\,p-172 \right)
 \left( p+1 \right)+{x}^{2}\left( p+1 \right)\\
 & \left( 65536\,{p}^{7}-65536\,{p}^{6}-
251904\,{p}^{5}+251904\,{p}^{4}+88464\,{p}^{3}-174864\,{p}^{2}-52621\,
p+7396 \right)   \\
&-360\,x \left( 128\,{p}^{2}-192\,p-
101 \right)  \left( 2\,p+1 \right)  \left( p+1 \right) p+90\, \left( 2
\,p+1 \right)  \left( p+1 \right) p \left( 64\,{p}^{2}-96\,p-43
 \right) )/\\
 &\left[ \left( 45\,F^{(p)}_{1}{x}^{2}p+90\,{x}^{4}+ \left( -480\,{p}^{2}-240
 \right) {x}^{3}+ \left( -512\,{p}^{4}+1240\,{p}^{2}+172 \right) {x}^{
2}+360\,xp-45\,p \right) p\right].
\end{align*}
Then the results can be summarized as follows:
\begin{align*}
  H_{k-1}(F^{(p)}_{1})=&\left(-\frac{2} {45}\frac{\left({1+2\,p}\right)\left({1+p}\right)\left({43+96\,p-64\,{p}^{2}}\right)} {p}\right)^{k-1} H_{k-2}(F^{(p)}_{2}), \\
H_{k-2}(F^{(p)}_{2})=&\left(-\frac{45} {2}\frac{117+224\,p+64\,{p}^{2}} {\left({1+2\,p}\right)\left({43+96\,p-64\,{p}^{2}}\right)^{2}}\right)^{k-2}H_{k-3}(F^{(p)}_{3}),\\
 H_{k-3}(F^{(p)}_{3})=&\left(-\frac{45} {2}\frac{43+96\,p-64\,{p}^{2}} {\left({1+2\,p}\right)\left({117+224\,p+64\,{p}^{2}}\right)^{2}}\right)^{k-3}H_{k-4}(F^{(p)}_{4}),\\
 H_{k-4}(F_{4}^{(p)})=&\left(-\frac{2} {45}\frac{\left({1+2\,p}\right)p \left({117+224\,p+64\,{p}^{2}}\right)} {1+p}\right)^{k-4} H_{k-5}(F^{(p+1)}_{1}).
\end{align*}
Combining the above formulas gives the recursion
\[H_{k-1}(F_1^{(p)})=\frac{\left({1+p}\right)^{3}} {{p}^{3}}H_{k-5}(F^{(p+1)}_{1}).\]
Let $k-1=4n+j$, where $0\leq j< 4$. We then deduce that
\begin{gather}
  H_{4n+j}(F_1^{(1)})=\left(1+n\right)^{3}H_{j}(F^{(n+1)}_{1}).\label{e-8F8n}
\end{gather}
The initial values are
\begin{align*}
 H_{0}(F^{(n+1)}_{1})=&1,\qquad H_{1}(F^{(n+1)}_{1})=-\frac{2} {45}\frac{\left({2+n}\right)\left({3+2\,n}\right)\left({75-32\,n-64\,{n}^{2}}\right)} {1+n},\\
H_{2}(F^{(n+1)}_{1})=&-\frac{2} {45}\frac{\left({2+n}\right)^{2}\left({3+2\,n}\right)\left({405+352\,n+64\,{n}^{2}}\right)} {\left({1+n}\right)^{2}},\ H_{3}(F^{(n+1)}_{1})=\frac{\left({2+n}\right)^{3}} {\left({1+n}\right)^{3}}.
\end{align*}
Then the theorem follows by the above initial values, \eqref{e-8F0} and \eqref{e-8F8n}.
\end{proof}

\subsection{A summary} Similarly, we have computed $H_n(f(x,r))$ for even $r$ up to $30$, and found a polynomial characterization as in Table \ref{table-even}. This leads to Conjecture \ref{conj-even}.

\begin{table}[h]  % [h]表格在文中放置的位置
  \centering  %作用是使表格居中
 % \begin{spacing}  %调整表格行距
  \label{table5} %表格标签 方便引用
  %\resizebox{0.5\hsize}{!}  % 缩小整体表格 需要导包
    {\begin{tabular}{|c|c|c|c|c|c|c|c|c|c|c|c|c|}   % c 表示表格中的文字居中
   %\arrayrulecolor{tabcolor}
    \hline
      {\color{red}{\diagbox[dir=SE]{{\color{black}$r$}}{{\color{black}$i$}}}} &     $0$     & $1$  &  $2$   & $3$   &   $4$  &   $5$   &   $6$   &   $7$   &  $8$   &  $9$  &   $10$ \\
      \hline
     $r=4$ & 1 & 1 &  &  &  &  &  &  &  &  &    \\
  \hline
  $r=6$ & 2 & 2 &3 &  &  &  &  &  & &  &   \\
  \hline
  $r=8$ & 3 & 3 & 6 & 6&  &  &  &  &  &  &   \\
  \hline
  $r=10$ & 4 & 4 & 9 & \textbf{10} & 9 &   &  &  &  &  & \\
  \hline
  $r=12$ &5& 5 & 12 & 15 &15 & 12 &  &  &  & & \\
  \hline
  $r=14$ &6 & 6 & 15 & 20 &\textbf{21}&20 & 15 && & & \\
  \hline
  $r=16$ &7 & 7 & 18& 25 &28&28 &25& 18 &  & &   \\
  \hline
  $r=18$ &8 & 8 & 21& 30 &35&\textbf{36} & 35& 30& 21 &  &  \\
  \hline
  $r=20$ &9 & 9 & 24& 35&42&45 & 45& 42& 35 & 24 &  \\
  \hline
  $r=22$ &10 & 10& 27&40&49&54 & \textbf{55}& 54& 49 & 40 &27 \\
  \hline
    \end{tabular}
  }\vspace{4mm}
  %\end{spacing}
  \caption{Up to a sign, $H_{tn+i}(F(x,r))$ is a polynomial in $n$,
    with the corresponding degree given in the table.\label{table-even}}% 表格标题
\end{table}

\section{On Hankel determinants $\det\left(\binom{2i+2j+r}{i+j}\right)_{0\le i,j\le n-1}$}\label{sec-HG}
Our main objective here is to evaluate the Hankel determinants
$\det\left(\binom{2i+2j+r}{i+j}\right)_{0\le i,j\le n-1}$.

\subsection{Some formulas}

The generating function of $\binom{2n+r}{n}$ is
\begin{align}
 G(x,r)= \sum_{n\ge 0}\binom{2n+r}{n} x^n = \frac{C(x)^r}{1-2xC(x)}=\frac{C(x)^r}{\sqrt{1-4x}}.
\end{align}
The cases $r=0,1$ are well-known.
\begin{align}
 H_n(G(x,0))&= 2^{n-1}, \qquad
H_n(G(x,1))=1.
\end{align}
Egecioglu, Redmond, and Ryavec computed $H_n(G(x,2))$ and
$H_n(G(x,3))$ and stated some conjectures for $r > 3$. See \cite{ Omer Egecioglu, Omer Egecioglu 0}.
We list the formulas for $r\le 8$ as follows. The readers will find that the patterns differ for even and odd $r$'s.
\begin{thm}\cite{Omer Egecioglu}\label{example-G3}
  \begin{align*}
    H_{3n}(G(x,3)) &=H_{3n+1}(G(x,3))=2n+1,\quad H_{3n+2}(G(x,3)) =-4(n+1).
  \end{align*}
\end{thm}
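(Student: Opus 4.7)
The plan is to mirror the continued-fraction argument used above for $F(x,r)$, but adapted to $G(x,3)=C(x)^{3}/(1-2xC(x))$. First I would convert the statement into a quadratic functional equation suitable for Sulanke--Xin's transformation $\tau$. Using $xC(x)^{2}=C(x)-1$ one finds, after clearing denominators in $g(1-2xC)=C^{3}$, that $g:=G(x,3)$ satisfies
\begin{align*}
x^{3}(4x-1)\,g^{2}-(1-x)(1-4x)\,g+1=0,
\end{align*}
equivalently
\begin{align*}
g=\frac{1}{(1-x)(1-4x)+x^{3}(1-4x)\,g},
\end{align*}
which is exactly of the form \eqref{xinF(x)} with $d=0$, $k=3$, $u(x)=(1-x)(1-4x)$, $v(x)=1-4x$ and $u(0)=1$. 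Hence Proposition \ref{xinu(0,i,ii)}(iii) applies.

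Next I would run the transformation $\tau$ iteratively on $G_{0}:=G(x,3)$. Since the claimed formulas have period $3$ in $n$ with linear-in-$n$ coefficients, I expect a shifted periodic continued fraction of order $3$:
\begin{align*}
G_{0}(x)\xrightarrow{\ \tau\ }G_{1}^{(1)}(x)\xrightarrow{\ \tau\ }G_{2}^{(1)}(x)\xrightarrow{\ \tau\ }G_{3}^{(1)}(x)\xrightarrow{\ \tau\ }G_{4}^{(1)}(x)=G_{1}^{(2)}(x)\xrightarrow{\ \tau\ }\cdots,
\end{align*}
where the whole sequence $G_{1}^{(p)}$ is governed by a single rational template whose coefficients are polynomials (or simple rational functions) in a shift parameter $p$. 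Exactly as in the proofs of Theorems \ref{example-F3}--\ref{example-F8}, I would run $\tau$ for the first few values of $p$ by computer, read off candidate numerator/denominator coefficients as polynomials in $p$, and then guess a closed form
\begin{align*}
G_{1}^{(p)}(x)=\frac{N_{p}(x)}{D_{p}(x)+x^{*}\,G_{1}^{(p)}(x)\cdot(\cdots)},
\end{align*}
whose correctness is then an algebraic identity one can verify symbolically (the existence of this template is essentially forced by periodicity).

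Once the template is in hand, the three individual $\tau$-steps from $G_{1}^{(p)}$ to $G_{1}^{(p+1)}$ give three relations of the form $H_{k-j}(G_{j}^{(p)})=c_{j}(p)^{k-j}H_{k-j-1}(G_{j+1}^{(p)})$ (with a possible sign from case (ii)). Multiplying them yields a one-step recursion
\begin{align*}
H_{k-1}(G_{1}^{(p)})=\varepsilon(p)\,H_{k-4}(G_{1}^{(p+1)}),
\end{align*}
where the product $\prod_{j}c_{j}(p)^{\cdots}$ telescopes beautifully; in the light of the target formulas $2n+1,\,2n+1,\,-4(n+1)$ I expect $\varepsilon(p)$ to simplify to a ratio like $-(2p+1)(2p+3)/(\text{previous factor})$ or similar, so that setting $k-1=3n+j$ with $j\in\{0,1,2\}$ gives
\begin{align*}
H_{3n+j}(G_{1}^{(1)})=\Big(\text{explicit product in }n\Big)\cdot H_{j}(G_{1}^{(n+1)}),
\end{align*}
and combined with $H_{n}(G_{0})=H_{n-1}(G_{1}^{(1)})$ from the first application of $\tau$ (case (iii) with $d=0$, no sign), finishes the evaluation after substituting the three initial values $H_{0},H_{1},H_{2}$ of $G_{1}^{(n+1)}$.

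The main obstacle I expect is the guessing of the parametric template for $G_{1}^{(p)}$: as the authors warn in \S \ref{sec-HF-even} for $F(x,6)$, spurious GCD-cancellations in Maple's output can obscure the polynomial-in-$p$ structure and force one to work with rational, rather than polynomial, coefficients in $p$. Once the template is pinned down and verified by direct substitution, the three $\tau$-steps, the telescoping of sign/scale factors, and the initial-value computation are routine and self-checking against the claimed answers $2n+1$, $2n+1$, $-4(n+1)$.
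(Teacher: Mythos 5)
Your proposal follows essentially the same route as the paper: the functional equation you derive, $g=\frac{1}{(1-x)(1-4x)+x^{3}(1-4x)g}$, is exactly the paper's $G(x,3)=-\frac{1}{(4x^{4}-x^{3})G(x,3)-4x^{2}+5x-1}$, and the paper indeed finds an order-$3$ shifted periodic continued fraction whose template $G_{1}^{(p)}$ has coefficients rational in $p$, with the three $\tau$-steps telescoping to $H_{k-1}(G_{1}^{(p)})=\frac{2p+1}{2p-1}H_{k-4}(G_{1}^{(p+1)})$ and hence to the factor $2n+1$, finished off by the initial values $H_{0}=1$, $H_{1}=-\frac{4(n+1)}{2n+1}$, $H_{2}=\frac{2n+3}{2n+1}$. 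Your sketch is correct in structure and anticipates all the key features (including the need for rational rather than polynomial coefficients in $p$); it only omits the explicit template and the three scale factors, which the paper supplies.
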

\begin{thm} \label{example-G5}
  \begin{align*}
    H_{5n}( G(x,5)) &=H_{5n+1}( G(x,5))=(2n+1)^2,\\
     H_{5n+2}( G(x,5)) &=-\frac13\, \left( 50\,{n}^{2}+89\,n+39 \right)  \left( 2\,n+1 \right),\\
      H_{5n+3}( G(x,5)) &=-16(n+1 )^2,\\
      H_{5n+4}( G(x,5)) &=\frac13\, \left( 100\,{n}^{2}+272\,n+183 \right)  \left( n+1 \right) .
  \end{align*}
\end{thm}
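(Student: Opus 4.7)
The plan is to apply Sulanke--Xin's quadratic transformation $\tau$ to $G_0:=G(x,5)$ in complete parallel with the proofs of Theorems \ref{example-F5} and \ref{example-F7} for $F(x,5)$ and $F(x,7)$ carried out in Section \ref{sec-HF-odd}. First I would write down a quadratic functional equation for $G_0 = C(x)^5/(1-2xC(x))$. Since every rational function of $C(x)$ lies in $\mathbb{Q}(x)+\mathbb{Q}(x)C(x)$, so does $G_0$, and hence $G_0$ satisfies a quadratic equation over $\mathbb{Q}(x)$. Eliminating $C(x)$ via \eqref{e-func-C(x)} (equivalently, the undetermined-coefficient recipe of Section \ref{subsec-func-F}) puts this equation into the shape
\[
G_0(x) = \frac{x^d}{u(x) + x^k v(x)\,G_0(x)}
\]
required by Proposition \ref{xinu(0,i,ii)}, with explicit $u(x),v(x)$.

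Next I would iterate $\tau$ symbolically. The five-row modular structure of Theorem \ref{example-G5}, together with the $F(x,5)$ precedent, suggests a shifted periodic continued fraction
\[
G_0 \mathop{\longrightarrow}\limits^\tau G^{(1)}_1 \mathop{\longrightarrow}\limits^\tau G^{(1)}_2 \mathop{\longrightarrow}\limits^\tau \cdots \mathop{\longrightarrow}\limits^\tau G^{(1)}_{q+1} = G^{(2)}_1 \mathop{\longrightarrow}\limits^\tau \cdots
\]
in which the total Hankel-index drop over one period equals $5$. Each application of $\tau$ contributes, via Proposition \ref{xinu(0,i,ii)}, either a pure scaling (case (i)) or an index shift by $d+1$ together with a scaling (cases (ii), (iii)); telescoping these factors across one period should produce a recursion
\[
H_{k-1}(G^{(p)}_1) = \lambda(p)\, H_{k-6}(G^{(p+1)}_1)
\]
for an explicit rational function $\lambda(p)$. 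Writing $k-1 = 5n+j$ with $0\le j\le 4$ and iterating yields
\[
H_{5n+j}(G^{(1)}_1) = \Bigl(\prod_{p=1}^{n}\lambda(p)\Bigr)\,H_j(G^{(n+1)}_1),
\]
and the five initial values $H_j(G^{(n+1)}_1)$ can then be read off directly from the closed form of $G^{(n+1)}_1(x)$ and matched against the formulas in Theorem \ref{example-G5}.

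The principal obstacle, as the authors already warn in the treatment of $F(x,6)$ and $F(x,8)$, is guessing the correct ansatz for $G^{(p)}_1(x)$. Because the denominator $1-2xC(x)$ present in $G(x,r)$ has no counterpart in $F(x,r)$, I expect the coefficients of $G^{(p)}_1$ to be genuine rational functions of $p$ rather than polynomials, so a naive polynomial ansatz is likely to fail and Maple's automatic simplification may obscure the pattern. The practical strategy is to run $\tau$ starting from $G_0$, extract $G^{(p)}_1$ for several small values of $p$ in succession, and reverse-engineer the general closed form, using rational-function interpolation in $p$. Once the correct ansatz has been identified, the verifications $\tau(G^{(p)}_j)=G^{(p)}_{j+1}$ and $\tau(G^{(p)}_{q})=G^{(p+1)}_1$ reduce to mechanical applications of Proposition \ref{xinu(0,i,ii)}, and the computation of the five initial values $H_j(G^{(n+1)}_1)$ is then a direct check completing the proof.
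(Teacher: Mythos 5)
Your proposal follows essentially the same route as the paper: starting from the quadratic functional equation $G(x,5)=-1/\bigl((4x^6-x^5)G(x,5)+4x^3-13x^2+7x-1\bigr)$, the paper iterates $\tau$ to get a shifted periodic continued fraction of order $5$, guesses a closed form for $G^{(p)}_1$ with rational-function coefficients in $p$, and telescopes to the recursion $H_{k-1}(G^{(p)}_1)=\bigl(\tfrac{2p+1}{2p-1}\bigr)^2H_{k-6}(G^{(p+1)}_1)$, whose product over $p$ gives the factor $(2n+1)^2$, exactly as you anticipate with your $\lambda(p)$. Your expectation that the coefficients of $G^{(p)}_1$ are genuinely rational in $p$ (not polynomial) is also borne out by the paper's formulas, so the proposal is a faithful blueprint of the actual proof.
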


\begin{thm}[Explicitly conjectured in \cite{Omer Egecioglu}]\label{example-G7}
  \begin{align*}
 H_{7n}(G(x,7))=& H_{7n+1}(G(x,7))= \left( 2\,n+1 \right) ^{3},\\
  H_{7n+2}(G(x,7))=&{\frac {1}{90}}\, { \left( n+1 \right)  \left( 9604\,{n}^{3}+
9604\,{n}^{2}-1323\,n-2340 \right) }(2\,n+1)^2,\\
H_{7n+3}(G(x,7))=&-\frac1{45}\, { \left( 19208\,{n}^{3}+67228\,{n}^{2}+70854\,n+23445
 \right)  \left( n+1 \right) ^{2}} \left( 2\,n+1 \right),\\
H_{7n+4}(G(x,7))=&64\, { \left( n+1 \right) ^{3}},\\
H_{7n+5}(G(x,7))=&\frac1{45}\, { \left( n+1 \right) ^{2} \left( 38416\,{n}^{4}+153664\,{
n}^{3}+208936\,{n}^{2}+103344\,n+9045 \right) },\\
 H_{7n+6}(G(x,7))=&-{\frac {1}{90}}\, { \left( 9604\,{n}^{3}+48020\,{n}^{2}+75509\,
n+38110 \right)  \left( 3+2\,n \right) ^{2} \left( n+1 \right) }.
\end{align*}
\end{thm}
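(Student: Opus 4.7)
The plan is to mimic exactly the strategy used in Section \ref{sec-HF-odd} for $F(x,7)$, adapted to $G(x,7)=C(x)^{7}/(1-2xC(x))$. The first step is to derive a quadratic functional equation for $G(x,7)$. Since $C(x)$ satisfies \eqref{e-func-C(x)}, any rational expression in $C(x)$ can be written uniquely as $\alpha(x)+\beta(x)C(x)$ with $\alpha,\beta$ rational, and therefore satisfies a quadratic functional equation. Using undetermined coefficients (as done in Section \ref{subsec-func-F}) one obtains an equation of the shape
\begin{equation*}
G(x,7)=\frac{x^{d}}{u(x)+x^{k}v(x)\,G(x,7)}
\end{equation*}
with explicit polynomial $u(x)$, $v(x)$, and small nonnegative integers $d,k$ suitable for Proposition \ref{xinu(0,i,ii)}.

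Next, starting from $F_{0}:=G(x,7)$ I would iteratively apply the transformation $\tau$ from Proposition \ref{xinu(0,i,ii)}, running the computer experiment exactly as in the $F(x,7)$ case. I expect a shifted periodic continued fraction of order $6$,
\begin{equation*}
F_{0}\xrightarrow{\tau}F_{1}^{(1)}\xrightarrow{\tau}F_{2}^{(1)}\xrightarrow{\tau}\cdots\xrightarrow{\tau}F_{6}^{(1)}\xrightarrow{\tau}F_{7}^{(1)}=F_{1}^{(2)}\xrightarrow{\tau}\cdots,
\end{equation*}
in complete analogy with Theorem \ref{example-F7}. The crucial step is to guess, from the first few $F_{1}^{(p)}$ generated by the computer, a closed rational expression
\begin{equation*}
F_{1}^{(p)}(x)=\frac{N(x,p)}{D(x,p)+x^{2}\,F_{1}^{(p)}(x)}
\end{equation*}
whose coefficients (in $x$) are polynomial, or at worst simple rational, in the parameter $p$. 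Once such a guess is made, verifying periodicity is a routine check: one just computes $\tau(F_{1}^{(p)}),\tau^{2}(F_{1}^{(p)}),\ldots,\tau^{6}(F_{1}^{(p)})$ symbolically and confirms it equals $F_{1}^{(p+1)}$, with each $\tau$ step recording an explicit multiplier $\mu_{i}(p)$ that gives $H_{k-i}(F_{i}^{(p)})=\mu_{i}(p)^{k-i}H_{k-i-1}(F_{i+1}^{(p)})$ (or with the index shift from case (ii) when $d\geq 1$).

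Multiplying the six successive relations will yield a clean recursion of the form
\begin{equation*}
H_{k-1}(F_{1}^{(p)})=\varepsilon(p)\,H_{k-8}(F_{1}^{(p+1)})
\end{equation*}
for an explicit factor $\varepsilon(p)$; setting $k-1=7n+j$ with $0\le j\le 6$ telescopes to
\begin{equation*}
H_{7n+j}(F_{1}^{(1)})=\Bigl(\prod_{p=1}^{n}\varepsilon(p)\Bigr)H_{j}(F_{1}^{(n+1)}).
\end{equation*}
Combined with $H_{k}(F_{0})=H_{k-1}(F_{1}^{(1)})$ and with the seven initial values $H_{j}(F_{1}^{(n+1)})$ for $0\le j\le 6$ read off from the guessed formula for $F_{1}^{(p)}$, one recovers precisely the seven cases listed in Theorem \ref{example-G7}.

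The main obstacle is the guessing step for $F_{1}^{(p)}$: the remark after Theorem \ref{example-F6} warns that Maple tends to cancel common factors, which can disguise the true polynomial pattern in $p$ and force one to work with rational rather than polynomial coefficients (as happened for $F(x,6)$). I expect $G(x,7)$ to behave more like $F(x,7)$ since $r=7$ is odd, so polynomial coefficients in $p$ should survive; nevertheless the degree in $p$ of the numerator/denominator can be large, so one must compute enough values of $p$ to interpolate reliably. Once the closed form for $F_{1}^{(p)}$ is in hand, everything else, including the verification of the exact polynomials in $n$ appearing in Theorem \ref{example-G7}, reduces to straightforward symbolic manipulation.
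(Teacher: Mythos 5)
Your proposal follows essentially the same route as the paper: derive the quadratic functional equation for $G(x,7)$, iterate Sulanke--Xin's $\tau$, guess a closed form for $G_1^{(p)}$ giving a shifted periodic continued fraction, telescope the multipliers to get $H_{k-1}(G_1^{(p)})=\left(\tfrac{2p+1}{2p-1}\right)^3H_{k-8}(G_1^{(p+1)})$, and finish with the seven initial values. The only inaccuracies are predictions that the computation would correct: the period turns out to be $7$ rather than $6$ (all steps have $d=0$ here, unlike the $F(x,7)$ case), and the guessed coefficients of $G_1^{(p)}$ are rational rather than polynomial in $p$ (a factor $2p-1$ appears in the denominator).
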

\begin{thm}\cite{Omer Egecioglu 0}\label{example-G2}
  \begin{align*}
    H_{2n}( G(x,2))=H_{2n+1}( G(x,2))=(-1)^{n}.
  \end{align*}
\end{thm}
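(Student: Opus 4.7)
The plan is to follow exactly the pattern of the proofs of Theorems \ref{example-F3}--\ref{example-F8}: set $F_{0}:=G(x,2)$, find a quadratic functional equation for it in the standard form of Proposition \ref{xinu(0,i,ii)}, and then iterate Sulanke--Xin's transformation $\tau$ until the continued fraction closes up into a periodic cycle. The conjectured answer has the clean period-two shape $H_{n}=-H_{n-2}$, so I expect no auxiliary parameter $p$ to be needed and the cycle to be purely periodic of short length.

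The first step is to derive a functional equation. Using $G(x,2)=C(x)^{2}/\sqrt{1-4x}$ together with the Catalan identities $xC(x)^{2}=C(x)-1$ and $\sqrt{1-4x}=1-2xC(x)$ (or invoking Appendix \ref{s-func-eq} directly), eliminating $C(x)$ yields
\[
(1-4x)\bigl(G(x,2)+x^{2}G(x,2)^{2}\bigr)=1,
\]
which I rewrite in the form required by Proposition \ref{xinu(0,i,ii)} as
\[
F_{0}=\frac{1}{(1-4x)+(1-4x)\,x^{2}F_{0}},\qquad u(x)=v(x)=1-4x,\ d=0,\ k=2,
\]
with the trivial decomposition $u_{L}(x)=1-4x$, $u_{H}(x)=0$ and $u(0)=1$.

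Next I would apply $\tau$ four times. Case iii) produces $F_{1}=(4x-1)/(1-4x-x^{2}F_{1})$ with $H_{n}(F_{0})=H_{n-1}(F_{1})$. Because $F_{1}(0)=-1$, case i) applies and gives $F_{2}=-F_{1}=1/\bigl(1+x^{2}F_{2}/(1-4x)\bigr)$ with $H_{n}(F_{2})=(-1)^{n}H_{n}(F_{1})$. A second case iii) step produces $F_{3}=-1/\bigl[(1-4x)(1-x^{2}F_{3})\bigr]$ with $H_{n}(F_{2})=H_{n-1}(F_{3})$, and a second case i) step gives $F_{4}=-F_{3}$, which one checks satisfies exactly the defining equation of $F_{0}$, so $F_{4}=F_{0}$ and $H_{n}(F_{3})=(-1)^{n}H_{n}(F_{0})$. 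Concatenating the four Hankel identities collapses to
\[
H_{n}(F_{0})=H_{n-1}(F_{1})=(-1)^{n-1}H_{n-1}(F_{2})=(-1)^{n-1}H_{n-2}(F_{3})=(-1)^{2n-3}H_{n-2}(F_{0})=-H_{n-2}(F_{0}).
\]
Combined with the trivial initial values $H_{0}(F_{0})=1$ and $H_{1}(F_{0})=\binom{2}{0}=1$, a one-line induction on $n$ delivers $H_{2n}(G(x,2))=H_{2n+1}(G(x,2))=(-1)^{n}$.

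The only real obstacle is finding the right functional equation: a naive form $F_{0}=1/(u(x)-x^{k}F_{0})$ with $u(x)$ a polynomial in $x$ alone does not close up under $\tau$, and one must allow $v(x)=1-4x$ to be a nontrivial rational factor (equivalently, one must notice the clean identity $G+x^{2}G^{2}=1/(1-4x)$). Once this form is in hand, the four $\tau$-steps are mechanical and the coefficients recombine precisely so that $F_{4}$ coincides with $F_{0}$ rather than with a parameter-shifted cousin, which is what produces the pure (rather than shifted) periodicity.
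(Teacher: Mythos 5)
Your proposal is correct and follows essentially the same route as the paper: starting from the functional equation $G(x,2)=\frac{1}{(1-4x)(1+x^2G(x,2))}$, iterate Sulanke--Xin's $\tau$ until the continued fraction returns to $G(x,2)$, extract the recursion $H_n=-H_{n-2}$, and finish with the initial values $H_0=H_1=1$. The only difference is bookkeeping: you display all four elementary steps (case iii, case i, case iii, case i), whereas the paper absorbs the two case-i) normalizations and shows only two intermediate functions, but the net periodicity and sign are the same.
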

\begin{thm}\cite{J.Cigler-Mike}\label{example-G4}
  \begin{align*}
    H_{4n}( G(x,4)) &=H_{4n+1}( G(x,4))=1,\\
     H_{4n+2}( G(x,4)) &=-H_{4n+3}( G(x,4))=-8(n+1).
  \end{align*}
\end{thm}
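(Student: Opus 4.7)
My plan is to evaluate $H_n(G(x,4))$ by the Sulanke--Xin continued fraction method used throughout the paper. The period-$4$ structure of the statement strongly suggests that the continued fraction attached to $G_0:=G(x,4)$ will be shifted periodic of order $4$, in direct analogy with the treatment of $F(x,4)$ in Theorem \ref{example-F4} and $G(x,3)$ in Theorem \ref{example-G3}.

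First I would derive a quadratic functional equation for $G_0$. Using the Catalan identity $xC^2=C-1$ one sees that $1-2xC(x)=(2-C(x))/C(x)$, so $G_0=C(x)^5/(2-C(x))$, and eliminating $C(x)$ by expressing $C(x)^5=a(x)+b(x)C(x)$ via the quadratic relation produces an equation of the form
\[
G_0(x)=\frac{x^{d}}{u(x)+x^{k}v(x)\,G_0(x)}
\]
with $d=0$ and some $k\geq 4$, which fits the hypothesis of Proposition \ref{xinu(0,i,ii)}.

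Next I iterate the transformation $\tau$. If $u(0)\neq 1$, one first normalises via case (i) (which introduces a factor of $u(0)^{k}$ into the Hankel recursion); subsequent steps use case (ii) or case (iii) depending on whether $k=1$ or $k\geq 2$. Guided by the period-$4$ output, I expect a one-parameter family $F_1^{(p)}(x)$ such that the chain
\[
F_1^{(p)}\xrightarrow{\tau}F_2^{(p)}\xrightarrow{\tau}F_3^{(p)}\xrightarrow{\tau}F_4^{(p)}\xrightarrow{\tau}F_1^{(p+1)}
\]
holds, yielding a recursion $H_{k-1}(F_1^{(p)})=\rho(p)\,H_{k-5}(F_1^{(p+1)})$ for an explicit rational $\rho(p)$. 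The form of $F_1^{(p)}$ is to be guessed by running $\tau$ symbolically in Maple for small $p$ and interpolating; as the authors warn in the $F(x,6)$ analysis of Section \ref{sec-HF-even}, a polynomial Ansatz in $p$ may fail due to unexpected cancellations, so rational-function coefficients in $p$ should be allowed from the start.

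Writing $k-1=4n+j$ with $0\leq j<4$ and iterating the recursion on $p$ reduces everything to the four initial values $H_j(F_1^{(n+1)})$ for $j=0,1,2,3$. Combining with the first reduction expressing $H_k(F_0)$ in terms of $H_k(F_1^{(1)})$ and telescoping $\prod_{p=1}^{n}\rho(p)$ should produce the closed forms $1$, $1$, $-8(n+1)$, $8(n+1)$ asserted in Theorem \ref{example-G4}; since the non-trivial entries grow linearly in $n$, I expect the telescoping factor to collapse to $(n+1)/1$, leaving only combinatorial constants in the initial values. The main obstacle is the discovery step of guessing the correct parametric form of $F_1^{(p)}$: the expressions grow quickly and, as in the $F(x,6)$ case, the coefficients typically need to be rational (not polynomial) in $p$. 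Once this family is in hand, verifying the four $\tau$-applications, computing $\rho(p)$, and evaluating the four initial values are routine symbolic verifications, and the period-$4$ structure directly yields the four-case closed form of the theorem.
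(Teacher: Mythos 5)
Your plan is exactly the paper's proof: the same functional equation $G(x,4)=1/\bigl(1-6x+8x^2+(x^4-4x^5)G(x,4)\bigr)$, the same iteration of $\tau$ producing a shifted periodic continued fraction of order $4$ with a guessed family $G_1^{(p)}$, and the same reduction to four initial values. The only detail that turns out differently is your expectation about the telescoping: in the actual computation the product of the four step-factors collapses to $1$ (so $H_{k-1}(G_1^{(p)})=H_{k-5}(G_1^{(p+1)})$), and the factor $8(n+1)$ appears directly in the initial values $H_1(G_1^{(n+1)})=-8(n+1)$ and $H_2(G_1^{(n+1)})=8(n+1)$ rather than in the telescoped product, but this is resolved automatically once the symbolic computation is carried out.
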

\begin{thm}\label{example-G6}
  \begin{align*}
    H_{6n}( G(x,6)) &=H_{6n+1}( G(x,6))=(-1)^n,\\
     H_{6n+2}( G(x,6)) &=(-1)^n\left( n+1 \right)  \left( 144\,{n}^{2}+72\,n-19 \right),\\
      H_{6n+3}( G(x,6)) &=- H_{6n+4}( G(x,6))=(-1)^{n+1}144(n+1)^2,\\
        H_{6n+5}( G(x,6)) &=(-1)^{n}\left( n+1 \right)  \left( 144\,{n}^{2}+504\,n+413 \right).
  \end{align*}
\end{thm}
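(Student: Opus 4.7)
The plan is to apply Sulanke--Xin's quadratic transformation $\tau$ (Proposition \ref{xinu(0,i,ii)}) to $G(x,6)$, following the same blueprint used for Theorems \ref{example-G3}--\ref{example-G5} and the even-$r$ theorems for $F(x,r)$ in Section \ref{sec-HF-even}. First I would derive a quadratic functional equation for $G_0 := G(x,6)$ in the form \eqref{xinF(x)} required by the proposition. Since $G(x,6) = F(x,6)/(1-2xC(x))$ and the functional equations of both $F(x,6)$ and $C(x)$ are already available (see Section \ref{subsec-func-F}), writing $G_0 = \alpha(x) + \beta(x)\, C(x)$ for rational $\alpha,\beta$ and eliminating $C(x)$ via $xC(x)^2 - C(x) + 1 = 0$ yields the desired quadratic equation with explicit $u(x), v(x), d, k$.

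Next I would iterate $\tau$ on $G_0$ in a computer algebra system. The period-$6$ modular structure of the target formula, together with the patterns observed for $G(x,r)$ with $r=3,4,5,7$, strongly suggests a shifted periodic continued fraction of order $6$:
\[
G_0(x) \mathop{\longrightarrow}\limits^\tau G_1^{(1)}(x) \mathop{\longrightarrow}\limits^\tau G_2^{(1)}(x) \mathop{\longrightarrow}\limits^\tau \cdots \mathop{\longrightarrow}\limits^\tau G_6^{(1)}(x) \mathop{\longrightarrow}\limits^\tau G_7^{(1)}(x) = G_1^{(2)}(x) \mathop{\longrightarrow}\limits^\tau \cdots,
\]
so that the very first step yields $H_k(G_0) = \pm H_{k-1}(G_1^{(1)})$. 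The central task is then to guess an explicit closed form for $G_1^{(p)}(x)$ as a rational function in $x$ depending on the integer parameter $p \ge 1$, by inspecting the images $\tau^{6(p-1)+1}(G_0)$ for small $p$ and reading off the $p$-dependence of the coefficients.

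Once a candidate formula for $G_1^{(p)}$ is in hand, verifying $G_7^{(p)} = G_1^{(p+1)}$ by chaining the six intermediate steps $G_1^{(p)} \mapsto G_2^{(p)} \mapsto \cdots \mapsto G_6^{(p)} \mapsto G_1^{(p+1)}$ is a symbolic computation: each step falls into one of the three cases of Proposition \ref{xinu(0,i,ii)}, and the accumulated products of the leading factors $u(0)^{\pm 1}$ together with the signs $(-1)^{\binom{d+1}{2}}$ produce a recursion of the form $H_{k-1}(G_1^{(p)}) = c(p)\, H_{k-7}(G_1^{(p+1)})$. Setting $k-1 = 6n + j$ with $0 \le j \le 5$ and iterating $n$ times telescopes the product and yields $H_{6n+j}(G_0) = \Pi(n)\cdot H_j(G_1^{(n+1)})$ for an explicit $\Pi(n)$. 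The six values $H_j(G_1^{(n+1)})$ for $j = 0, \ldots, 5$ are then read directly from the guessed formula, producing the six cases of the theorem.

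The main obstacle I expect is the guessing step. As already noted in the proof of Theorem \ref{example-F6}, the even-$r$ regime frequently exhibits cancellations that block a polynomial-in-$p$ ansatz for the coefficients of $G_1^{(p)}(x)$; one instead has to try a rational-function ansatz for the numerator and denominator of $G_1^{(p)}$, with degrees in $p$ calibrated to enough numerical data to fix the coefficients uniquely. In particular, getting the correct denominator in $p$ (which controls the $\Pi(n)$-factors $(n+1)$, $(144n^2+72n-19)$, etc.\ appearing in the theorem) is likely to require several rounds of trial and error. Once the correct form has been fixed, the verification of $G_7^{(p)} = G_1^{(p+1)}$ and the subsequent Hankel evaluation are essentially automatic.
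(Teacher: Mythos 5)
Your proposal matches the paper's own proof in every essential respect: the paper starts from the functional equation $G(x,6)= -1/\bigl( ( 4x^{7}-x^{6})G(x,6)+4x^3-13x^{2}+7x-1\bigr)$, iterates $\tau$ to obtain a shifted periodic continued fraction of order $6$, guesses $G_1^{(p)}$ (whose coefficients turn out to be honest polynomials in $p$, so the rational-function fallback you anticipate from the $F(x,6)$ case is not needed here), and telescopes the six multipliers to the recursion $H_{k-1}(G_1^{(p)})=-H_{k-7}(G_1^{(p+1)})$, whence $H_{6n+j}(G_1^{(1)})=(-1)^nH_j(G_1^{(n+1)})$ and the six initial values give the theorem. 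This is exactly the route you describe, so the proposal is correct and essentially identical to the paper's argument.
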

\begin{thm}\label{example-G8}
  \begin{align*}
    H_{8n}(G(x,8)) &=H_{8n+1}=1,\\
     H_{8n+2}(G(x,8)) &=-\frac2{15}\, \left( n+1 \right)  \left( 256\,{n}^{2}+192\,n+15 \right)
 \left( 256\,{n}^{2}+192\,n+17 \right),\\
      H_{8n+3}(G(x,8)) &={\frac {16}{45}}\, \left( 65536\,{n}^{4}+262144\,{n}^{3}+272896\,{n}^{
2}+79104\,n-3915 \right)  \left( n+1 \right) ^{2},\\
      H_{8n+4}(G(x,8)) &=-H_{8n+5}(G(x,8))=4096\, \left( n+1 \right) ^{3}=(4\times 4)^3\, \left( n+1 \right) ^{3},\\
      H_{8n+6}(G(x,8))&={\frac {16}{45}}\, \left( 65536\,{n}^{4}+262144\,{n}^{3}+272896\,{n}^{
2}-36096\,n-119115 \right)  \left( n+1 \right) ^{2},\\
H_{8n+7}(G(x,8))&=\frac2{15}\, \left( n+1 \right)  \left( 256\,{n}^{2}+832\,n+655 \right)
 \left( 256\,{n}^{2}+832\,n+657 \right).
  \end{align*}
\end{thm}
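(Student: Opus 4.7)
The plan is to mirror the treatment of $F(x,8)$ in Section \ref{sec-HF-even} almost verbatim, adapted to $G(x,8)=C(x)^{8}/(1-2xC(x))$. First, I would derive the quadratic functional equation for $G_0:=G(x,8)$. Since $C(x)$ satisfies the quadratic $xC(x)^{2}-C(x)+1=0$, any rational function of $C(x)$ can be written uniquely in the form $\alpha(x)+\beta(x)C(x)$ and therefore satisfies a quadratic functional equation; the same method of undetermined coefficients that produced the list of functional equations for $F(x,r)$ in Section \ref{subsec-func-F} will produce one for $G(x,8)$ of the shape required by \eqref{xinF(x)}, i.e.\ $G_0(x)=x^{d}/\bigl(u(x)+x^{k}v(x)G_0(x)\bigr)$ for a small nonnegative integer $d$ and a polynomial $u(x)$ of low degree.

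Next, I would iterate Sulanke--Xin's transformation $\tau$ on $G_0$ with the aid of Proposition~\ref{xinu(0,i,ii)}, exactly as done for $F(x,8)$. Based on the degree pattern in Table~\ref{table-even} (which shows the even case $r=2t$ yields period $t$), and on the structure that was already observed when comparing $F(x,6)$ with $F(x,8)$, I expect a shifted periodic continued fraction
\[
G_0(x)\xrightarrow{\tau}G_1^{(1)}(x)\xrightarrow{\tau}G_2^{(1)}(x)\xrightarrow{\tau}\cdots\xrightarrow{\tau}G_{8}^{(1)}(x)\xrightarrow{\tau}G_9^{(1)}(x)=G_1^{(2)}(x)\xrightarrow{\tau}\cdots,
\]
of order $8$, with $H_k(G_0)=\varepsilon_k\,H_{k-1}(G_1^{(1)})$ for some explicit sign. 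By running Maple on the first few periods and tracking the scalar factors produced at each application of Proposition~\ref{xinu(0,i,ii)}, one extracts a recursion of the form
\[
H_{k-1}(G_1^{(p)})=c(p)\,H_{k-9}(G_1^{(p+1)}),
\]
where $c(p)$ is a rational function of $p$. Writing $k-1=8n+j$ with $0\le j<8$ then telescopes this into $H_{8n+j}(G_1^{(1)})=\Bigl(\prod_{p=1}^{n}c(p)\Bigr)H_j(G_1^{(n+1)})$, which after finite initial-value computations for $j=0,1,\dots,7$ should give precisely the six closed-form expressions claimed in Theorem~\ref{example-G8}.

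The main obstacle, by far, is step three: guessing a clean closed form of $G_1^{(p)}$ as a function of the parameter $p$. Exactly as warned in the proof of Theorem~\ref{example-F6}, the naive ansatz ``coefficients of $G_1^{(p)}$ are polynomials in $p$'' is likely to fail because Maple silently simplifies common factors between numerator and denominator; one must keep $G_1^{(p)}$ in an unreduced rational form whose numerator and denominator are each polynomials in $p$ of a fixed small degree. Concretely, I would compute $G_1^{(p)}$ for $p=1,2,3,4,\ldots$ numerically, fit the coefficients of each monomial $x^i$ in numerator and denominator by a rational function of $p$ of bounded degree, and then verify the guessed $G_1^{(p)}$ by directly checking that $\tau^{\,8}(G_1^{(p)})=G_1^{(p+1)}$ using the functional-equation form. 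Given the magnitude of the constants appearing in Theorem~\ref{example-G8} (the coefficients $65536$, $262144$, $4096=(4\cdot 4)^3$, $\tfrac{2}{15}$, $\tfrac{16}{45}$), the interpolation will require sufficiently many data points but remains mechanical.

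Once the formula for $G_1^{(p)}$ is in hand, the remainder is routine: at each of the eight steps one reads off $u(0)$, $d$, $u_L$, $u_H$ from Proposition~\ref{xinu(0,i,ii)} to obtain the scalar factor relating $H_{k-i}(G_i^{(p)})$ and $H_{k-i-1}(G_{i+1}^{(p)})$; these factors collapse by the usual cancellation pattern (pairs of reciprocals) into a very simple net product $c(p)$. Evaluating $H_j(G_1^{(n+1)})$ for $j=0,\ldots,7$ from the explicit rational $G_1^{(n+1)}$ recovers the seven displayed cases, and the identification $H_k(G_0)=H_{k-1}(G_1^{(1)})$ together with the telescoped recursion yields Theorem~\ref{example-G8}.
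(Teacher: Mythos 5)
Your proposal follows essentially the same route as the paper's proof: derive the quadratic functional equation for $G(x,8)$, iterate Sulanke--Xin's $\tau$ to obtain a shifted periodic continued fraction of order $8$, guess $G_1^{(p)}$ by interpolation in $p$, and telescope the resulting recursion $H_{k-1}(G_1^{(p)})=c(p)H_{k-9}(G_1^{(p+1)})$ (which in fact has $c(p)=1$, so no sign or product accumulates) before reading off the eight initial values. The approach and all its key steps match the paper's argument.
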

% \footnote{A similar situation has been observed in \cite{T. Redmond} for
%the sequences $\displaystyle\binom{2n+r}{n}$.}
For general $r$, the following partial results were proved in \cite{J.Cigler-Mike}.
\begin{thm}\cite[Theorems 7.3 and 7.6]{J.Cigler-Mike}\label{Ciglernew-conj-odd}
  For odd positive integer $r=2t+1$, we have
  \begin{align*}
    H_{(2t+1)n}(G(x,r)) & = H_{(2t+1)n+1}(G(x,r))=(2n+1)^t, \\
     H_{(2t+1)n+t+1}(G(x,r))& =(-1)^{\binom{t+1}{2}}4^t(n+1)^t.
  \end{align*}
   For even positive integer $r=2t$, we have
  \begin{align*}
    H_{2tn}(G(x,r)) & = H_{2tn+1}(G(x,r))=(-1)^{tn}, \\
     H_{2tn+t}(G(x,r))&= H_{2tn+t+1}(G(x,r)) =(-1)^{n\binom{2t}{2}}(4t)^{t-1}(n+1)^{t-1}.
  \end{align*}
\end{thm}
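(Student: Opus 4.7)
The plan is to adapt the Sulanke--Xin continued fraction machinery developed in Sections \ref{sec-HF-odd} and \ref{sec-HF-even} for $F(x,r)$ to the generating function $G(x,r)=C(x)^r/(1-2xC(x))$. Since $G(x,r)$ is a rational function of $C(x)$, the classical argument recalled in Section \ref{subsec-func-F} guarantees that $G(x,r)$ satisfies a quadratic functional equation of the form $G(x,r)=A_r(x)/(B_r(x)-x^{s_r}G(x,r))$ for explicit polynomials $A_r(x), B_r(x)$ and an integer $s_r$. My first step is to compute these functional equations for small $r$ by undetermined coefficients (exactly as was done for $F(x,r)$), and to guess closed-form expressions for $A_r, B_r$ for general $r$, splitting into odd $r=2t+1$ and even $r=2t$ cases. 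The proof of the guessed functional equation can then be carried out along the lines of Appendix \ref{f-e-proof}.

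Next, I would iterate the quadratic transformation $\tau$ of Proposition \ref{xinu(0,i,ii)} starting from $G_0(x):=G(x,r)$ and look for a shifted periodic continued fraction
\[G_0(x)\xrightarrow{\tau}G_1^{(1)}(x)\xrightarrow{\tau}G_2^{(1)}(x)\xrightarrow{\tau}\cdots\xrightarrow{\tau}G_{L+1}^{(1)}(x)=G_1^{(2)}(x)\xrightarrow{\tau}\cdots,\]
with the period length $L$ matching the residue modulus appearing in Theorem~\ref{Ciglernew-conj-odd}: $L=2t+1$ in the odd case and $L=2t$ in the even case. Based on computer experiment, I would then guess an explicit closed form for $G_1^{(p)}(x)$, with coefficients that are polynomials in $p$ in the odd case and (as warned in Section \ref{sec-HF-even}) likely rational functions of $p$ in the even case. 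From the closed form, each stage of the transformation yields an identity $H_{k-j}(G_j^{(p)})=c_j(p)^{k-j}H_{k-j-1}(G_{j+1}^{(p)})$, and composing them over one period gives a recursion $H_{k-1}(G_1^{(p)})=\varepsilon(p)H_{k-1-L}(G_1^{(p+1)})$ for an explicit sign/factor $\varepsilon(p)$.

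Writing $k-1=Ln+j$ with $0\le j<L$ and iterating $n$ times telescopes the recursion to $H_{Ln+j}(G_1^{(1)})=\bigl(\prod_{\ell=1}^n\varepsilon(\ell)\bigr)H_j(G_1^{(n+1)})$, so the entire problem reduces to computing the handful of initial values $H_j(G_1^{(n+1)})$ for $0\le j<L$. For the theorem at hand I only need those $j$ corresponding to the residues $0,1,t+1$ (odd case) and $0,1,t,t+1$ (even case); each such $H_j(G_1^{(n+1)})$ is a determinant of fixed size with entries that are explicit rational functions in $n$, so it can be evaluated in closed form directly from the guessed $G_1^{(p)}$. Combining with the sign telescope reproduces $(2n+1)^t$, $(-1)^{\binom{t+1}{2}}4^t(n+1)^t$, $(-1)^{tn}$, and $(-1)^{n\binom{2t}{2}}(4t)^{t-1}(n+1)^{t-1}$ as required.

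The main obstacle is undoubtedly step two: guessing a uniform closed form for $G_1^{(p)}(x)$ that works for \emph{all} $t$, rather than case-by-case for each small $r$ (which is all the paper achieves for $F(x,r)$). For the special residues appearing in Theorem~\ref{Ciglernew-conj-odd}, however, only limited information about $G_1^{(p)}(x)$ is actually needed --- essentially the constant-term and leading data of $u(x)$ at each of the $L$ stages, since these alone determine the factors $c_j(p)$ through the $u(0)^{n}$ formula of Proposition \ref{xinu(0,i,ii)}(i). A reasonable fallback, if a closed $G_1^{(p)}$ eludes guesswork, is to argue inductively: use the identity $G(x,r)=C(x)\,G(x,r-1)$ together with a ``composition rule'' for $\tau$ under multiplication by $C(x)$, reducing the claim for $r$ to that for $r-1$ or $r-2$. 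Either route leads to the stated formulas, and the verification for small $t$ along the lines of Theorems \ref{example-G3}--\ref{example-G8} provides a solid base case.
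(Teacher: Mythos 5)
There is a genuine gap here, and it is worth noting first that the paper itself does \emph{not} prove this statement: Theorem \ref{Ciglernew-conj-odd} is quoted verbatim from Cigler and Mike \cite[Theorems 7.3 and 7.6]{J.Cigler-Mike}, and the surrounding text only verifies its specific instances $r\le 8$ (Theorems \ref{example-G3}--\ref{example-G8}) by the Sulanke--Xin method. Your proposal is essentially a faithful description of that case-by-case machinery, but the statement to be proved is uniform in $t$, and the one step that would make your argument a proof --- guessing and then \emph{proving} a closed form for $G_1^{(p)}(x)$ (or at least for the data $(u(0),d)$ at every one of the $L$ stages) valid for all $t$ simultaneously --- is exactly the step you flag as ``the main obstacle'' and then leave unresolved. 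The paper's own experience shows this is not a formality: already for $r=9$ the formula for $F_1^{(p)}$ is ``too complicated to print,'' the coefficient polynomials grow in degree with $r$, and for even $r$ the coefficients are not even polynomials in $p$. Without that uniform description, your telescoping recursion $H_{k-1}(G_1^{(p)})=\varepsilon(p)H_{k-1-L}(G_1^{(p+1)})$ exists only for each fixed small $r$, which reproduces the paper's partial results but not the theorem.

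Two further points would need repair even granting a guessed $G_1^{(p)}$. First, the ``initial values'' $H_j(G_1^{(n+1)})$ you need are determinants of size $j$, and the residues $j=t,\,t+1$ relevant to the theorem give determinants whose size grows with $t$; they are not ``of fixed size,'' so evaluating them in closed form for general $t$ is itself a nontrivial determinant evaluation, not a finite computation. Second, the fallback via $G(x,r)=C(x)\,G(x,r-1)$ presupposes a ``composition rule'' for $\tau$ under multiplication by $C(x)$; no such rule appears in Proposition \ref{xinu(0,i,ii)} or elsewhere, and Hankel determinants do not behave functorially under products of generating functions, so this route is not available as stated. A correct attribution would simply cite \cite{J.Cigler-Mike}, whose proof proceeds by different (non--continued-fraction) means.
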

Note that this theorem includes $H_{5n+i}(F(x,5))$ for $0\le i\le 3$ as special cases.

Our method works for these Hankel determinants for specific $r$. Firstly, for odd $r$, the first several functional equations are as follows.
\begin{gather*}
 G(x,1) = \frac1{\left( 1-4\,x \right) \left( x G(x,1)+1\right) },\\
G(x,3)= \frac1 {\left(  1-4\,x \right) \left({x}^{3} G(x,3)- \left( x-1 \right) \right) },\\
G(x,5)= \frac1 {\left(  1-4\,x \right) \left({x}^{5} G(x,5)+ \left( {x}^{2}-3\,x+1
 \right) \right) },\\
G(x,7)= \frac1 {\left(  1-4\,x \right) \left({x}^{7} G(x,7)- \left( {x}^{3}-6\,{x}^{
2}+5\,x-1 \right)\right)  },\\
G(x,9)= \frac1 {\left(  1-4\,x \right) \left({x}^{9} G(x,9)+ \left( {x}^{4}-10\,{x}
^{3}+15\,{x}^{2}-7\,x+1 \right)\right) },\\
G(x,11)= \frac1 {\left(  1-4\,x \right) \left( {x}^{11}G(x,11)- \left( {x}^{5}-15\,{x
}^{4}+35\,{x}^{3}-28\,{x}^{2}+9\,x-1 \right) \right)
 },\\
G(x,13)= \frac1 {\left(  1-4\,x \right) \left({x}^{13} G(x,13)+ \left( {x}^{6}-21\,{x
}^{5}+70\,{x}^{4}-84\,{x}^{3}+45\,{x}^{2}-11\,x+1 \right) \right) }.
\end{gather*}
For even $r$,  the first several functional equations are as follows.
\begin{gather*}
  G(x,2)=  \frac1 {\left(  1-4\,x \right) \left( {x}^{2}G(x,2)+1\right) },\\
  G(x,4)=  \frac1 {\left(  1-4\,x \right) \left({x}^{4} G(x,4)-\left( 2\,x-1 \right)
 \right)  },\\
 G(x,6)= \frac1 {\left(  1-4\,x \right) \left({x}^{6}G(x,6)+ \left( 3\,{x}^{2}-4\,x+
1 \right) \right) },\\
G(x,8)= \frac1 {\left(  1-4\,x \right) \left({x}^{8} G(x,8)- \left( 4\,{x}^{3}-10\,{
x}^{2}+6\,x-1 \right)\right)  },\\
G(x,10)= \frac1 {\left(  1-4\,x \right) \left({x}^{10}G(x,10)+ \left( 5\,{x}^{4}-20
\,{x}^{3}+21\,{x}^{2}-8\,x+1 \right)\right) },\\
G(x,12)= \frac1 {\left(  1-4\,x \right) \left({x}^{12}G(x,12)-  \left( 6\,{x}^{5}-35
\,{x}^{4}+56\,{x}^{3}-36\,{x}^{2}+10\,x-1 \right)\right) }.
\end{gather*}
Just like that of $F(x,r)$, we guess and prove explicit functional equations of $G(x,r)$ for general $r$. See Appendix \ref{g-e-proof} for details.

\subsection{Proof for odd $r=2t+1$}
For $r=3$, we have the functional equation
\begin{align*}
  G(x,3)= -\frac1 { \left( 4\,{x}^{4}-{x}^{3} \right)  G(x,3)-4\,{x}^{2}+5\,x-1
 }.
\end{align*}
\begin{proof}[Proof of Theorem \ref{example-G3}]
We apply Proposition \ref{xinu(0,i,ii)} to $G_0:=G(x,3)$ by repeatedly using the transformation $\tau$.
This results in a shifted periodic continued fractions of order 3:
\begin{align*}
  G_0(x)& \mathop{\longrightarrow}\limits^\tau G_1^{(1)}(x)\mathop{\longrightarrow}\limits^\tau G^{(1)}_{2}(x)\mathop{\longrightarrow}\limits^\tau G^{(1)}_{3}(x)\mathop{\longrightarrow}\limits^\tau G^{(1)}_{4}(x)= G_2^{(1)}(x)\cdots.
\end{align*}
We obtain
\begin{align}
  H_k(G_0)=H_{k-1}(G_1^{(1)}).\label{e-3G0}
\end{align}
For $p\geq1$. Computer experiment suggests us to define
\begin{align*}
  G^{(p)}_{1} =&-{\frac {4\,{x}^{2}+ \left( 18\,p+1 \right) x \left( 2\,p-1 \right) -4
\,p \left( 2\,p-1 \right) }{ \left( G^{(p)}_{1}{x}^{2} \left( 2\,p-1 \right) +4
\,{x}^{2}+ \left( 10\,p-5 \right) x-2\,p+1 \right)  \left( 2\,p-1
 \right) }}.
\end{align*}
Then the results can be summarized as follows:
\begin{align*}
  H_{k-1}(G^{(p)}_{1})=&\left(-\frac{4p} {2p-1}\right)^{k-1} H_{k-2}(G^{(p)}_{2}), \\
H_{k-2}(G^{(p)}_{2})=&\left(\frac{1} {16}\frac{4p^2-1} {p^2}\right)^{k-2}H_{k-3}(G^{(p)}_{3}),\\
 H_{k-3}(G_{3}^{(p)})=&\left(-\frac{4p} {2p+1}\right)^{k-3} H_{k-4}(G_{1}^{(p+1)}).
\end{align*}
Combining the above formulas gives the recursion
\[H_{k-1}(G_1^{(p)})=\frac{2p+1}{2p-1}H_{k-4}(G^{(p+1)}_{1}).\]
Let $k-1=3n+j$, where $0\leq j<3$. We then deduce that
\begin{gather}
  H_{3n+j}(G_1^{(1)})=(2n+1)H_{j}(G^{(n+1)}_{1}).\label{e-3G3n}
\end{gather}
The initial values are
\begin{align*}
 H_{0}(G^{(n+1)}_{1})=&1, \qquad H_{1}(G^{(n+1)}_{1})=-\frac{4\left(n+1\right)} {2n+1},\qquad
H_{2}(G^{(n+1)}_{1})=\frac{2n+3}{2n+1}.
\end{align*}
Then the theorem follows by the above initial values, \eqref{e-3G0} and \eqref{e-3G3n}.
\end{proof}

\medskip
For $r=5$, we have the functional equation
\begin{align*}
  G(x,5)= -\frac1 { \left( 4\,{x}^{6}-{x}^{5} \right)  G(x,5)+4\,x^3-13\,{x}^{2}+7\,x-1
 }.
\end{align*}
\begin{proof}[Proof of Theorem \ref{example-G5}]
We apply Proposition \ref{xinu(0,i,ii)} to $G_0:=G(x,5)$ by repeatedly using the transformation $\tau$.
This results in a shifted periodic continued fractions of order 5:
\begin{align*}
  G_0(x)& \mathop{\longrightarrow}\limits^\tau G_1^{(1)}(x)\mathop{\longrightarrow}\limits^\tau G^{(1)}_{2}(x)\mathop{\longrightarrow}\limits^\tau G^{(1)}_{3}(x)\mathop{\longrightarrow}\limits^\tau G^{(1)}_{4}(x)\mathop{\longrightarrow}\limits^\tau G^{(1)}_{5}(x)\mathop{\longrightarrow}\limits^\tau G^{(1)}_{6}(x)=  G_1^{(2)}(x)\cdots.
\end{align*}
We obtain
\begin{align}
  H_k(G_0)=H_{k-1}(G_1^{(1)}).\label{e-5G0}
\end{align}
For $p\geq1$. Computer experiment suggests us to define
\begin{align*}
  G^{(p)}_{1} =&-\frac13\,{\frac {36\,{x}^{4}+ \mathcal{P}_2(p){x}^{3
}+\mathcal{P}_4(p) {x}^{2}+\mathcal{P}_3(p) x -\mathcal{P}_3(p) }{ \left(  \left( 6\,p-3 \right) {x
}^{2} G^{(p)}_{1}-12\,{x}^{3}+ \mathcal{P}_2(p) {x}^{2}+
 \left( 42\,p-21 \right) x-6\,p+3 \right)  \left( 2\,p-1 \right) }}.
\end{align*}
\textbf{Note:} Here and what follows, we use $\mathcal{P}_d(p)$ to denote a polynomial in $p$ of degree $d$. The contents of these
polynomials are not important enough to be printed.
Then the results can be summarized as follows:
\begin{align*}
  H_{k-1}(G^{(p)}_{1})=&\left(-\frac13\,{\frac {p \left( 50\,p-11 \right) }{2\,p-1}}\right)^{k-1} H_{k-2}(G^{(p)}_{2}), \\
H_{k-2}(G^{(p)}_{2})=&\left(-\frac{144}{2500\,{p}^{2}-1100\,p+121 }\right)^{k-2}H_{k-3}(G^{(p)}_{3}),\\
 H_{k-3}(G_{3}^{(p)})=&\left(-{\frac {1}{2304}}\,{\frac { \left( 100\,{p}^{2}+72\,p+11 \right)
 \left( 50\,p-11 \right)  \left( 2\,p-1 \right) }{{p}^{2}}}\right)^{k-3} H_{k-4}(G_{4}^{(p)}),\\
 H_{k-4}(G_{4}^{(p)})=&\left(-\frac{144}{2500\,{p}^{2}+1100\,p+121 }\right)^{k-4} H_{k-5}(G_{5}^{(p)}),\\
 H_{k-5}(G_{5}^{(p)})=&\left(\frac13\,{\frac { \left( 50\,p+11 \right) p}{2\,p+1}} \right)^{k-5} H_{k-6}(G_{1}^{(p+1)}).
\end{align*}
Combining the above formulas gives the recursion
\[H_{k-1}(G_1^{(p)})=\left(\frac{2p+1}{2p-1}\right)^2H_{k-6}(G^{(p+1)}_{1}).\]
Let $k-1=5n+j$, where $0\leq j<5$. We then deduce that
\begin{gather}
  H_{5n+j}(G_1^{(1)})=(2n+1)^2H_{j}(G^{(n+1)}_{1}).\label{e-5G5n}
\end{gather}
The initial values are
\begin{align*}
 H_{0}(G^{(n+1)}_{1})=&1,\quad H_{1}(G^{(n+1)}_{1})=-\frac13\,{\frac {50\,{n}^{2}+89\,n+39}{2\,n+1}},\quad H_{2}(G^{(n+1)}_{1})=-16\,{\frac { \left( n+1 \right) ^{2}}{ \left( 2\,n+1 \right) ^{2}}},\\
H_{3}(G^{(n+1)}_{1})=&\frac13\,{\frac { \left( n+1 \right)  \left( 100\,{n}^{2}+272\,n+183\right) }{ \left( 2\,n+1 \right) ^{2}}},\qquad H_{4}(G^{(n+1)}_{1})=\frac{(2n+3)^2}{(2n+1)^2}.
\end{align*}
Then the theorem follows by the above initial values, \eqref{e-5G0} and \eqref{e-5G5n}.
\end{proof}

\medskip
For $r=7$, we have the functional equation
\begin{align*}
 G(x,7)= \frac1 {\left(  1-4\,x \right) \left({x}^{7} G(x,7)- \left( {x}^{3}-6\,{x}^{
2}+5\,x-1 \right)\right)  }.
\end{align*}
\begin{proof}[Proof of Theorem \ref{example-G7}]
We apply Proposition \ref{xinu(0,i,ii)} to $G_0:=G(x,7)$ by repeatedly using the transformation $\tau$.
This results in a shifted periodic continued fractions of order 7:
\begin{align*}
  G_0(x)& \mathop{\longrightarrow}\limits^\tau G_1^{(1)}(x)\mathop{\longrightarrow}\limits^\tau G^{(1)}_{2}(x)\mathop{\longrightarrow}\limits^\tau G^{(1)}_{3}(x)\mathop{\longrightarrow}\limits^\tau G^{(1)}_{4}(x)\mathop{\longrightarrow}\limits^\tau G^{(1)}_{5}(x)\\
  &\mathop{\longrightarrow}\limits^\tau G^{(1)}_{6}(x)\mathop{\longrightarrow}\limits^\tau G^{(1)}_{7}(x)\mathop{\longrightarrow}\limits^\tau G^{(1)}_{8}(x)= G_1^{(2)}(x)\cdots.
\end{align*}
We obtain
\begin{align}
  H_k(G_0)=H_{k-1}(G_1^{(1)}).\label{e-7G0}
\end{align}
For $p\geq1$. Computer experiment suggests us to define
\begin{align*}
  G^{(p)}_{1} =&-{\frac {1}{180}}\,{\frac {32400\,{x}^{6}+ \mathcal{P}_2(p) {x}^{5}+\mathcal{P}_4(p) {x}^{4}+\mathcal{P}_6(p) {x}^{3}+
 \mathcal{P}_8(p) {x}^{2}-\mathcal{P}_5(p) x+\mathcal{P}_5(p) }{ \left((90p-45)x^2G^{(p)}_{1}+ 180\,{x}^{4}+ \mathcal{P}_2(p) {x}^{3}+ \mathcal{P}_4(p) {x}^{2}+\mathcal{P}_1(p)x-90\,p+45 \right)  \left( 2\,p-1 \right) }}.
\end{align*}
Then the results can be summarized as follows:
\begin{align}
  H_{k-1}(G^{(p)}_{1})=&\left({\frac {1}{90}}\,{\frac {p \left( 9604\,{p}^{3}-19208\,{p}^{2}+8281\,p
-1017 \right) }{2\,p-1}}\right)^{k-1} H_{k-2}(G^{(p)}_{2}), \nonumber\\
H_{k-2}(G^{(p)}_{2})=&\left(-180\,{\frac {19208\,{p}^{3}+9604\,{p}^{2}-5978\,p+611}{ \left( 9604\,
{p}^{3}-19208\,{p}^{2}+8281\,p-1017 \right) ^{2}}}\right)^{k-2}H_{k-3}(G^{(p)}_{3}),\nonumber\\
 H_{k-3}(G_{3}^{(p)})=&\left(1440\,{\frac {9604\,{p}^{3}-19208\,{p}^{2}+8281\,p-1017}{ \left( 19208
\,{p}^{3}+9604\,{p}^{2}-5978\,p+611 \right) ^{2}}}\right)^{k-3} H_{k-4}(G_{4}^{(p)}),\nonumber\\
 H_{k-4}(G_{4}^{(p)})=&\Big(-{\frac {1}{\left(2\right) ^{12} \left(3 \right) ^{4} \left(5\right) ^{2}}}\, \left( 2\,p+1 \right)  \left( 19208\,{
p}^{3}-9604\,{p}^{2}-5978\,p-611 \right)  \left( 2\,p-1 \right)\nonumber\\
& \left( 19208\,{p}^{3}+9604\,{p}^{2}-5978\,p+611 \right)/{{p}^{2}}\Big)^{k-4} H_{k-5}(G_{5}^{(p)}),\label{com-G7}\\
 H_{k-5}(G^{(p)}_{5})=&\left(-1440\,{\frac {9604\,{p}^{3}+19208\,{p}^{2}+8281\,p+1017}{ \left(
19208\,{p}^{3}-9604\,{p}^{2}-5978\,p-611 \right) ^{2}}}\right)^{k-5} H_{k-6}(G^{(p)}_{6}),\nonumber \\
H_{k-6}(G^{(p)}_{6})=&\left(180\,{\frac {19208\,{p}^{3}-9604\,{p}^{2}-5978\,p-611}{ \left( 9604\,{
p}^{3}+19208\,{p}^{2}+8281\,p+1017 \right) ^{2}}}\right)^{k-6}H_{k-7}(G^{(p)}_{7}),\nonumber\\
 H_{k-7}(G_{7}^{(p)})=&\left(-{\frac {1}{90}}\,{\frac {p \left( 9604\,{p}^{3}+19208\,{p}^{2}+8281\,
p+1017 \right) }{2\,p+1}}\right)^{k-7} H_{k-8}(G_{1}^{(p+1)})\nonumber.
\end{align}
Combining the above formulas gives the recursion
\[H_{k-1}(G_1^{(p)})=\left(\frac{2p+1}{2p-1}\right)^3H_{k-8}(G^{(p+1)}_{1}).\]
Let $k-1=7n+j$, where $0\leq j<7$. We then deduce that
\begin{gather}
  H_{7n+j}(G_1^{(1)})=(2n+1)^3H_{j}(G^{(n+1)}_{1}).\label{e-7G7n}
\end{gather}
The initial values are
\begin{align*}
 H_{0}(G^{(n+1)}_{1})=&1,\qquad H_{1}(G^{(n+1)}_{1})={\frac {1}{90}}\,{\frac { \left( n+1 \right)  \left( 9604\,{n}^{3}+
9604\,{n}^{2}-1323\,n-2340 \right) }{2\,n+1}},\\
H_{2}(G^{(n+1)}_{1})=&-\frac1{45}\,{\frac { \left( 19208\,{n}^{3}+67228\,{n}^{2}+70854\,n+23445
 \right)  \left( n+1 \right) ^{2}}{ \left( 2\,n+1 \right) ^{2}}},\\
H_{3}(G^{(n+1)}_{1})=&64\,{\frac { \left( n+1 \right) ^{3}}{ \left( 2\,n+1 \right) ^{3}}},\\
H_{4}(G^{(n+1)}_{1})=&\frac1{45}\,{\frac { \left( n+1 \right) ^{2} \left( 38416\,{n}^{4}+153664\,{
n}^{3}+208936\,{n}^{2}+103344\,n+9045 \right) }{ \left( 2\,n+1
 \right) ^{3}}},\\
 H_{5}(G^{(n+1)}_{1})=&-{\frac {1}{90}}\,{\frac { \left( 9604\,{n}^{3}+48020\,{n}^{2}+75509\,
n+38110 \right)  \left( 3+2\,n \right) ^{2} \left( n+1 \right) }{
 \left( 2\,n+1 \right) ^{3}}},\\
 H_{6}(G^{(n+1)}_{1})=&{\frac { \left( 3+2\,n \right) ^{3}}{ \left( 2\,n+1 \right) ^{3}}} .
\end{align*}
Then the theorem follows by the above initial values, \eqref{e-7G0} and \eqref{e-7G7n}.
\end{proof}
%\begin{rema}
%Though the final formula for $r=7$ is reasonable, equation \ref{com-G7} suggests that our computation for larger $r$ may becomes complicated.
%\end{rema}

\subsection{The result of $H_n(G(x,r))$ for $r=2t$}
For $r=2$, we have the functional equation
\begin{align*}
  G(x,2)=  \frac1 {\left(  1-4\,x \right) \left( {x}^{2}G(x,2)+1\right) }.
\end{align*}
\begin{proof}[Proof of Theorem \ref{example-G2}]
Let $G_0:=G(x,2)$, we repeatedly apply Proposition \ref{xinu(0)(ii)} to obtain the following periodic continued fractions.
\begin{align*}
  H_k(G_0)&=H_{k-1}(G_1) & G_1=\frac{1-4x}{x^2G_1+4x-1}\qquad\qquad\\
  H_{k-1}(G_1)&=-H_{k-2}(G_2) & G_2= \frac1 {\left(  1-4\,x \right) \left( {x}^{2}G(x,2)+1\right) }
\end{align*}
Now we see that $G_2=G_0$. So the continued fractions is periodic of order $2$: $G_0 \to G_1 \to G_0.$ By summarizing the above results, we obtain the
recursion $H_k(F_0)=-H_{k-2}(F_0)$, which implies that
$H_{2n+i}(G_0)=(-1)^{n}H_i(G_0), \text{ for }i=0,1.$
The initial values are $H_0(G_0)=1,\ H_1(G_0)=1.$ Then we can get $H_n(G(x,2))$.
\end{proof}

\medskip
For $r=4$, we have the functional equation
\begin{align*}
  G(x,4)= -\frac1 { \left( 4\,{x}^{5}-{x}^{4} \right)  G(x,4)-8\,{x}^{2}+6\,x-1
 }.
\end{align*}
\begin{proof}[Proof of Theorem \ref{example-G4}]
We apply Proposition \ref{xinu(0,i,ii)} to $G_0:=G(x,4)$ by repeatedly using the transformation $\tau$.
This results in a shifted periodic continued fractions of order 4:
\begin{align*}
  G_0(x)& \mathop{\longrightarrow}\limits^\tau G_1^{(1)}(x)\mathop{\longrightarrow}\limits^\tau G^{(1)}_{2}(x)\mathop{\longrightarrow}\limits^\tau G^{(1)}_{3}(x)\mathop{\longrightarrow}\limits^\tau G^{(1)}_{4}(x)= G_1^{(2)}(x)\cdots.
\end{align*}
We obtain
\begin{align}
  H_k(G_0)=H_{k-1}(G_1^{(1)}).\label{e-4G0}
\end{align}
For $p\geq1$. Computer experiment suggests us to define
\begin{align*}
  G^{(p)}_{1} =&-{\frac {4\,{x}^{3}+ \left( 64\,{p}^{2}-64\,p-1 \right) {x}^{2}+48\,px
-8\,p}{{x}^{2}G^{(p)}_{1}-1+ \left( 16\,p-8 \right) {x}^{2}+6\,x}}.
\end{align*}
Then the results can be summarized as follows:
\begin{align*}
  H_{k-1}(G^{(p)}_{1})=&\left(-8p\right)^{k-1} H_{k-2}(G^{(p)}_{2}), \\
H_{k-2}(G^{(p)}_{2})=&\left(\frac{1} {8p}\right)^{k-2}H_{k-3}(G^{(p)}_{4}),\\
 H_{k-3}(G_{3}^{(p)})=&\left(-\frac{1} {8p}\right)^{k-3} H_{k-4}(G_{4}^{(p)}),\\
 H_{k-4}(G_{4}^{(p)})=&\left(8p\right)^{k-4} H_{k-5}(G_{1}^{(p+1)}).
\end{align*}
Combining the above formulas gives the recursion
\[H_{k-1}(G_1^{(p)})=H_{k-5}(G^{(p+1)}_{1}).\]
Let $k-1=4n+j$, where $0\leq j<4$. We then deduce that
\begin{gather}
  H_{4n+j}(G_1^{(1)})=H_{j}(G^{(n+1)}_{1}).\label{e-4G4n}
\end{gather}
The initial values are
\begin{align*}
 H_{0}(G^{(n+1)}_{1})=&1,\quad H_{1}(G^{(n+1)}_{1})=-8\left(n+1\right),\quad H_{2}(G^{(n+1)}_{1})=8\left(n+1\right),\quad H_{3}(G^{(n+1)}_{1})=1.
\end{align*}
Then the theorem follows by the above initial values, \eqref{e-4G0} and \eqref{e-4G4n}.
\end{proof}

\medskip
For $r=6$, we have the functional equation
\begin{align*}
  G(x,6)= -\frac1 { \left( 4\,{x}^{7}-{x}^{6} \right)  G(x,6)+4\,x^3-13\,{x}^{2}+7\,x-1
 }.
\end{align*}
\begin{proof}[Proof of Theorem \ref{example-G6}]
We apply Proposition \ref{xinu(0,i,ii)} to $G_0:=G(x,6)$ by repeatedly using the transformation $\tau$.
This results in a shifted periodic continued fractions of order 6:
\begin{align*}
  G_0(x)& \mathop{\longrightarrow}\limits^\tau G_1^{(1)}(x)\mathop{\longrightarrow}\limits^\tau G^{(1)}_{2}(x)\mathop{\longrightarrow}\limits^\tau G^{(1)}_{3}(x)\mathop{\longrightarrow}\limits^\tau G^{(1)}_{4}(x)\mathop{\longrightarrow}\limits^\tau G^{(1)}_{5}(x)\mathop{\longrightarrow}\limits^\tau G^{(1)}_{6}(x) \\
  &\mathop{\longrightarrow}\limits^\tau G^{(1)}_{7}(x) =  G_1^{(2)}(x)\cdots.
\end{align*}
We obtain
\begin{align}
  H_k(G_0)=H_{k-1}(G_1^{(1)}).\label{e-6G0}
\end{align}
For $p\geq1$. Computer experiment suggests us to define
\begin{align*}
  G^{(p)}_{1} =&-{\frac {4\,{x}^{5}+\mathcal{P}_2 (p) {x}^{4}+\mathcal{P}_4(p) {x}^{3}+\mathcal{P}_6(p) {x}^{2}-\mathcal{P}_3(p) x+\mathcal{P}_3(p) }{ G^{(p)}_{1} {x}^{2}-1+
 \mathcal{P}_1(p) {x}^{3}- \mathcal{P}_3(p) {x}^{2}+8\,x}}.
\end{align*}
Then the results can be summarized as follows.
\begin{align*}
  H_{k-1}(G^{(p)}_{1})=&\left(p \left( 144\,{p}^{2}-216\,p+53 \right) \right)^{k-1} H_{k-2}(G^{(p)}_{2}), \\
H_{k-2}(G^{(p)}_{2})=&\left(-\frac{144}{\left( 144\,{p}^{2}-216\,p+53 \right) ^{2}}\right)^{k-2}H_{k-3}(G^{(p)}_{3}),\\
 H_{k-3}(G_{3}^{(p)})=&\left({\frac {1}{144}}\,{\frac {144\,{p}^{2}-216\,p+53}{p}}\right)^{k-3} H_{k-4}(G_{4}^{(p)}),\\
 H_{k-4}(G_{4}^{(p)})=&\left(-{\frac {1}{144}}\,{\frac {144\,{p}^{2}+216\,p+53}{p}}\right)^{k-4} H_{k-5}(G_{5}^{(p)}),\\
 H_{k-5}(G_{5}^{(p)})=&\left(-\frac{144}{\left( 144\,{p}^{2}+216\,p+53 \right) ^{2}}\right)^{k-5} H_{k-6}(G_{1}^{(p+1)}),\\
 H_{k-6}(G_{6}^{(p)})=&\left(-p \left( 144\,{p}^{2}+216\,p+53 \right)\right)^{k-6} H_{k-7}(G_{1}^{(p+1)}).
\end{align*}
Combining the above formulas gives the recursion
\[H_{k-1}(G_1^{(p)})=-H_{k-7}(G^{(p+1)}_{1}).\]
Let $k-1=6n+j$, where $0\leq j<6$. We then deduce that
\begin{gather}
  H_{6n+j}(G_1^{(1)})=(-1)^nH_{j}(G^{(n+1)}_{1}).\label{e-6G6n}
\end{gather}
The initial values are
\begin{align*}
 H_{0}(G^{(n+1)}_{1})=&1,  H_{1}(G^{(n+1)}_{1})=\left( n+1 \right)  \left( 144\,{n}^{2}+72\,n-19 \right), H_{2}(G^{(n+1)}_{1})=-144(n+1)^2,\\
H_{3}(G^{(n+1)}_{1})=&144(n+1)^2, H_{4}(G^{(n+1)}_{1})=\left( n+1 \right)  \left( 144\,{n}^{2}+504\,n+413 \right) ,H_{5}(G^{(n+1)}_{1})=-1.
\end{align*}
Then the theorem follows by the above initial values, \eqref{e-6G0} and \eqref{e-6G6n}.
\end{proof}

\medskip
For $r=8$, we have the functional equation
\begin{align*}
  G(x,8)= -\frac1 { \left( 4\,{x}^{9}-{x}^{8} \right)  G(x,8)+4\,x^3-13\,{x}^{2}+7\,x-1
 }.
\end{align*}
\begin{proof}[Proof of Theorem \ref{example-G8}]
We apply Proposition \ref{xinu(0,i,ii)} to $G_0:=G(x,8)$ by repeatedly using the transformation $\tau$.
This results in a shifted periodic continued fractions of order 8:
\begin{align*}
  G_0(x)& \mathop{\longrightarrow}\limits^\tau G_1^{(1)}(x)\mathop{\longrightarrow}\limits^\tau G^{(1)}_{2}(x)\mathop{\longrightarrow}\limits^\tau G^{(1)}_{3}(x)\mathop{\longrightarrow}\limits^\tau G^{(1)}_{4}(x)\mathop{\longrightarrow}\limits^\tau G^{(1)}_{5}(x)\mathop{\longrightarrow}\limits^\tau G^{(1)}_{6}(x) \mathop{\longrightarrow}\limits^\tau G^{(1)}_{7}(x) \\
  &\mathop{\longrightarrow}\limits^\tau G^{(1)}_{8}(x) \mathop{\longrightarrow}\limits^\tau G^{(1)}_{9}(x) = G_1^{(2)}(x)\cdots.
\end{align*}
We obtain
\begin{align}
  H_k(G_0)=H_{k-1}(G_1^{(1)}).\label{e-8G0}
\end{align}
For $p\geq1$. Computer experiment suggests us to define
\begin{align*}
  G^{(p)}_{1} =&-\frac1{15}\,{\frac {900\,{x}^{7}+ \mathcal{P}_2(p) {x}^{6}+\mathcal{P}_4(p) {x}^{5}+\mathcal{P}_6(p) {x}^{4}+
\mathcal{P}_8(p){x}^{3}+\mathcal{P}_{10}(p) {x}^{2}+\mathcal{P}_5(p) x-\mathcal{P}_5(p) }{15\,G^{(p)}_{1} {x}^{2}-15+
 \mathcal{P}_1(p) {x}^{4}+\mathcal{P}_3(p) {x}^{3}+\mathcal{P}_5(p) {x}^{2}+150\,x}}.
\end{align*}
Then the results can be summarized as follows:
\begin{align*}
  H_{k-1}(G^{(p)}_{1})=&\left(-\frac2{15}\,p \left( 256\,{p}^{2}-320\,p+79 \right)  \left( 256\,{p}^{2}-
320\,p+81 \right) \right)^{k-1} H_{k-2}(G^{(p)}_{2}), \\
H_{k-2}(G^{(p)}_{2})=&\left(20\,{\frac {65536\,{p}^{4}-120320\,{p}^{2}+57600\,p-6731}{ \left( 256
\,{p}^{2}-320\,p+79 \right) ^{2} \left( 256\,{p}^{2}-320\,p+81
 \right) ^{2}}}\right)^{k-2}H_{k-3}(G^{(p)}_{3}),\\
 H_{k-3}(G_{3}^{(p)})=&\left(-4320\,{\frac { \left( 256\,{p}^{2}-320\,p+79 \right)  \left( 256\,{p}
^{2}-320\,p+81 \right) }{ \left( 65536\,{p}^{4}-120320\,{p}^{2}+57600
\,p-6731 \right) ^{2}}}\right)^{k-3} H_{k-4}(G_{4}^{(p)}),\\
 H_{k-4}(G_{4}^{(p)})=&\left(-{\frac {1}{11520}}\,{\frac {65536\,{p}^{4}-120320\,{p}^{2}+57600\,p-
6731}{p}}\right)^{k-4} H_{k-5}(G_{5}^{(p)}),\\
 H_{k-5}(G_{5}^{(p)})=&\left({\frac {1}{11520}}\,{\frac {65536\,{p}^{4}-120320\,{p}^{2}-57600\,p-
6731}{p}}\right)^{k-5} H_{k-6}(G_{6}^{(p)}),\\
 H_{k-6}(G_{6}^{(p)})=&\left(-4320\,{\frac { \left( 256\,{p}^{2}+320\,p+79 \right)  \left( 256\,{p}
^{2}+320\,p+81 \right) }{ \left( 65536\,{p}^{4}-120320\,{p}^{2}-57600
\,p-6731 \right) ^{2}}}\right)^{k-6} H_{k-7}(G_{7}^{(p)}),\\
 H_{k-7}(G_{7}^{(p)})=&\left(20\,{\frac {65536\,{p}^{4}-120320\,{p}^{2}-57600\,p-6731}{ \left( 256
\,{p}^{2}+320\,p+81 \right) ^{2} \left( 256\,{p}^{2}+320\,p+79
 \right) ^{2}}}\right)^{k-7} H_{k-8}(G_{8}^{(p)}),\\
 H_{k-8}(G_{8}^{(p)})=&\left(\frac2{15}\,p \left( 256\,{p}^{2}+320\,p+79 \right)  \left( 256\,{p}^{2}+320
\,p+81 \right)\right)^{k-8} H_{k-9}(G_{1}^{(p+1)}).
\end{align*}
Combining the above formulas gives the recursion
\[H_{k-1}(G_1^{(p)})=H_{k-9}(G^{(p+1)}_{1}).\]
Let $k-1=8n+j$, where $0\leq j<8$. We then deduce that
\begin{gather}
  H_{8n+j}(G_1^{(1)})=H_{j}(G^{(n+1)}_{1}).\label{e-8G8n}
\end{gather}
The initial values are
\begin{align*}
 H_{0}(G^{(n+1)}_{1})=&1,\quad H_{1}(G^{(n+1)}_{1})=-\frac2{15}\, \left( n+1 \right)  \left( 256\,{n}^{2}+192\,n+15 \right)
 \left( 256\,{n}^{2}+192\,n+17 \right) ,\\
H_{2}(G^{(n+1)}_{1})=&{\frac {16}{45}}\, \left( 65536\,{n}^{4}+262144\,{n}^{3}+272896\,{n}^{
2}+79104\,n-3915 \right)  \left( n+1 \right) ^{2},\\
H_{3}(G^{(n+1)}_{1})=&4096\, \left( n+1 \right) ^{3}, \ H_{4}(G^{(n+1)}_{1})=-4096\, \left( n+1 \right) ^{3} ,\\
  H_{5}(G^{(n+1)}_{1})=&{\frac {16}{45}}\, \left( 65536\,{n}^{4}+262144\,{n}^{3}+272896\,{n}^{
2}-36096\,n-119115 \right)  \left( n+1 \right) ^{2},\\
 H_{6}(G^{(n+1)}_{1})=&\frac2{15}\, \left( n+1 \right)  \left( 256\,{n}^{2}+832\,n+655 \right)
 \left( 256\,{n}^{2}+832\,n+657 \right), \quad H_{7}(G^{(n+1)}_{1})=1.
\end{align*}
Then the theorem follows by the above initial values, \eqref{e-8G0} and \eqref{e-8G8n}.
\end{proof}
We arrive at the following conjecture.
\begin{conj}
For odd positive integer $r$ with $r=2t+1$, we have, for $1\leq j\leq \lfloor \frac{r+3}{4}\rfloor$, $H_{rn+j}(G(x,r))$, $H_{rn+t+j}(G(x,r))$ and $H_{rn+t+2-j}(G(x,r))$, $H_{rn+r+1-j}(G(x,r))$
are all polynomials in $n$ of degree $(2j-1)(t+1-j)$.\\
For even positive integer $r$ with $r=2t$, we have, for $1\leq j\leq t$, $(-1)^{tn}H_{rn+j}(G(x,r))$ and $(-1)^{tn}H_{rn+r+1-j}(G(x,r))$
are both polynomials in $n$ of degree $(j-1)(r+1-2j)$.
\end{conj}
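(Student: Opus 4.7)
The plan is to extend the shifted periodic continued fraction approach of Sulanke--Xin, carried out for $r\le 8$ in Section \ref{sec-HG}, to all $r$ uniformly. Starting from the explicit functional equation for $G(x,r)$ proved in Appendix \ref{g-e-proof}, I would apply $\tau$ once to obtain $G_1^{(1)}(x)$ and then guess a closed-form Ansatz
\[
G_1^{(p)}(x) = -\frac{N_r(x,p)}{D_r(x,p) + \alpha_r(p)\, x^2\, G_1^{(p)}(x)},
\]
where $N_r, D_r$ are polynomials in $x$ of degree $\le r-1$ whose coefficients are polynomial or rational in $p$, with $p$-degree controlled by the maximum conjectured Hankel degree. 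The Ansatz would be fitted by Lagrange interpolation from the data for small $r$ and then verified by induction on $p$: one checks symbolically that $\tau^{\,r}$ applied to $G_1^{(p)}$ reproduces $G_1^{(p+1)}$, using structural polynomials $\mathcal P_d(p)$ analogous to those in the $r=5,7,8$ derivations.

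Assuming such an Ansatz, the $r$ intermediate applications of Proposition \ref{xinu(0,i,ii)} yield a chain
\[
H_{k-i}(G_i^{(p)}) = c_i(p)^{k-i}\, H_{k-i-\delta_i}(G_{i+1}^{(p)}), \qquad 1\le i\le r,
\]
with explicit $c_i(p)\in\mathbb{Q}(p)$ and shifts $\delta_i\in\{1,2\}$ summing to $r+1$. Composing telescopes them into the master recursion
\[
H_{k-1}(G_1^{(p)}) = \mu(p)\, H_{k-1-r}(G_1^{(p+1)}),
\]
with $\mu(p) = \bigl(\tfrac{2p+1}{2p-1}\bigr)^{t}$ for odd $r=2t+1$ and $\mu(p) = (-1)^{t}$ for even $r=2t$, extrapolating the shapes verified for $r\le 8$. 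Iterating from $p=1$ to $p=n$ and using $H_k(G(x,r)) = H_{k-1}(G_1^{(1)})$ reduces the problem to showing that each of the $r$ initial values $H_j\bigl(G_1^{(n+1)}\bigr)$ for $0\le j<r$, multiplied by $\prod_{p=1}^{n}\mu(p)$ and reindexed, is a polynomial in $n$ of the exact degree claimed in the conjecture.

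For the odd case, $\prod_{p=1}^n \mu(p) = (2n+1)^t$ telescopes cleanly and absorbs precisely the denominators of $H_j(G_1^{(n+1)})$ -- which are rational in $n$ with denominators dividing a power of $2n+1$, as already visible in the $r=5,7$ computations -- producing polynomials of degree $(2j-1)(t+1-j)$. The palindromic symmetry $j\leftrightarrow r+1-j$ (and $j\leftrightarrow t+2-j$ in the odd case) would fall out of an involution on the Ansatz matching palindromic factor pairs like $50p\pm 11$ or $144p^2\pm 216p+53$ seen in the $r=5,6$ derivations. For the even case a parallel argument runs with the sign $(-1)^{tn}$ in place of $(2n+1)^t$ and no denominator clearing. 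The main obstacle by far is constructing the Ansatz $N_r, D_r, \alpha_r$ \emph{uniformly} in $r$: the $p$-degrees of the coefficients grow roughly linearly with $r$, so pure interpolation becomes infeasible past $r\approx 31$, as the running-time tables of Section \ref{sec-HF-odd} illustrate. A promising structural direction is to work inside the algebra $\mathbb Q(x)[C(x)] / \langle xC(x)^2-C(x)+1\rangle$, decomposing $G_1^{(p)}$ as $\alpha(x)+\beta(x)C(x)$ with $\alpha,\beta$ governed by Chebyshev- or Jacobi-type polynomials in $p$ that encode the arithmetic of $\binom{2n+r}{n}$. Absent such a uniform recipe, the conjecture can only be settled one $r$ at a time, which is precisely the status of the authors' current $r\le 31$ verification.
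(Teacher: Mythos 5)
Your proposal does not prove the statement, and to be fair the paper does not either: this is stated as a conjecture, and the authors only report that it ``is not hard to be verified for $r\le 26$ by our method.'' Your plan is essentially a restatement of that method (apply $\tau$ repeatedly, guess a closed form for $G_1^{(p)}$, telescope the chain into a master recursion, read off the initial values $H_j(G_1^{(n+1)})$), and you correctly identify the telescoping product $\prod_{p=1}^n\bigl(\tfrac{2p+1}{2p-1}\bigr)^t=(2n+1)^t$ for odd $r$ and the sign $(-1)^{tn}$ for even $r$. But every step that would turn this into a proof is left as an assumption. Concretely: (1) the Ansatz $G_1^{(p)}=-N_r(x,p)/(D_r(x,p)+\alpha_r(p)x^2G_1^{(p)})$ is never constructed for general $r$, and nothing is offered to show such a form with coefficients rational in $p$ must exist or that $\tau$ applied $r$ times sends $G_1^{(p)}$ to $G_1^{(p+1)}$; (2) even granting the Ansatz, the conjectured degree $(2j-1)(t+1-j)$ is exactly the content that must be extracted from the initial values $H_j(G_1^{(n+1)})$, and you give no mechanism for computing or bounding those degrees uniformly in $r$ and $j$ --- the palindromic symmetry is asserted to ``fall out of an involution on the Ansatz'' that is never defined. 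You acknowledge this yourself in the final sentence, which concedes that the argument only works one $r$ at a time; that concession is accurate, but it means the proposal is a programme, not a proof.

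One small factual slip worth fixing: for the $G(x,r)$ chains the paper computes, every application of $\tau$ shifts the index by exactly $1$ and the $r$ steps sum to a total shift of $r$ (consistent with your master recursion $H_{k-1}(G_1^{(p)})=\mu(p)H_{k-1-r}(G_1^{(p+1)})$), whereas you state the shifts $\delta_i\in\{1,2\}$ sum to $r+1$. A sum of $r+1$ over the $r$ steps would contradict the master recursion you wrote down; the $\{1,2\}$ pattern with one shift of $2$ belongs to the $F(x,r)$ chains of Sections \ref{sec-HF-odd} and \ref{sec-HF-even}, whose period is shorter than $r$.
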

Currently, the conjecture is not hard to be verified for $r\le 26$ by our method.

\section{Appendix: Proof of functional equations \label{s-func-eq}}
\subsection{Functional equations of $F(x,r)=C(x)^r$ }\label{f-e-proof}
We have the following result.
\begin{prop}\label{p-func-F(x,r)}
Let $F(x,r)=C(x)^r$. For odd positive integer $r=2t+1$, we have
\begin{align}\label{e-F(x,r)-odd}
   F(x,2t+1)=-\frac{1}{\left(\sum_{i=0}^{t }\frac{(-1)^{i+1}\binom{2t -i}{i}(2t +1)}{2t -2i+1}x^i\right)+{x}^{2t +1} F(x,r)}.
\end{align}
For even positive integer $r=2t$, we have
 \begin{align}\label{e-F(x,r)-even}
   F(x,2t)=-\frac{1}{\left(\sum_{i=0}^{t }(-1)^{i+1}\cdot\frac{(2t )\binom{2t -i}{i}}{2t-i}\cdot x^{i}\right)+{x}^{2t } F(x,r)}.
\end{align}
\end{prop}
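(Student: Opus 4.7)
The plan is to reduce both parts of the proposition to a single classical symmetric function identity. The key substitution is to set $y = xC(x)$; since $C(x) = 1/(1-xC(x))$, we have $1-y = C(x)^{-1}$, and eliminating $C$ gives $y(1-y) = x$. Thus $y$ and $1-y$ are the two roots of $z^2 - z + x = 0$, with elementary symmetric functions $y+(1-y) = 1$ and $y(1-y) = x$.

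Next I would rewrite the target functional equations in terms of $y$. Observe that $C(x)^r = y^r/x^r$, so in the odd case ($r = 2t+1$) the claimed identity \eqref{e-F(x,r)-odd} is, after multiplying both sides by $P_t(x) + x^{2t+1}F(x,r)$ and using $F(x,r)^{-1} = (1-y)^{2t+1}$ (from $C^{-1} = 1-y$) together with $x^{2t+1}F(x,r) = y^{2t+1}$, equivalent to
\begin{equation*}
(1-y)^{2t+1} + y^{2t+1} \;=\; -P_t(x),
\end{equation*}
where $P_t(x)$ denotes the polynomial in the bracket of \eqref{e-F(x,r)-odd}. The even case reduces in exactly the same way to $(1-y)^{2t} + y^{2t} = -Q_t(x)$ for the corresponding polynomial $Q_t$ in \eqref{e-F(x,r)-even}.

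Since $u = y$ and $v = 1-y$ satisfy $u+v = 1$ and $uv = x$, the power sums $S_n := u^n + v^n$ obey the recurrence $S_n = S_{n-1} - x S_{n-2}$ with $S_0 = 2$, $S_1 = 1$. The Waring (Girard) formula then gives the closed form
\begin{equation*}
y^n + (1-y)^n \;=\; \sum_{k=0}^{\lfloor n/2\rfloor} \frac{n}{n-k}\binom{n-k}{k}(-x)^k .
\end{equation*}
Specializing to $n = 2t+1$ and $n = 2t$ and matching coefficients (using the identity $\frac{1}{n-k}\binom{n-k}{k} = \frac{1}{n-2k}\binom{n-k-1}{k}$ to bring the indices into the form appearing in $P_t$) recovers exactly $-P_t(x)$ and $-Q_t(x)$, respectively.

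The main obstacle is purely bookkeeping: verifying that the Waring coefficients coincide with the stated ones after the index shift. I would check this by a short combinatorial manipulation (or, alternatively, prove the recurrence $S_n = S_{n-1}-xS_{n-2}$ is satisfied by the RHS of \eqref{e-F(x,r)-odd}/\eqref{e-F(x,r)-even} with the correct initial values $S_0, S_1$, which is a finite binomial identity). Everything else — including the derivation of $y^2 - y + x = 0$ and the passage from $F = C^r$ to the equation for $y$ — is algebraic manipulation with no hidden difficulty.
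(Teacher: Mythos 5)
Your proposal is correct, and it takes a genuinely different route from the paper. The paper reduces the proposition to a one-variable polynomial identity (its Lemma on $(y+1)^{r}\pm 1$ expanded in the basis $(y+1)^i y^{r-2i}$), substitutes $y=-C(x)$ so that $y+1=-xC(x)^2$, and proves the lemma by shifting $y\to y-1$, equating coefficients, and verifying the resulting hypergeometric identity with Zeilberger's creative telescoping plus induction. You instead set $y=xC(x)$, observe that $y$ and $1-y=C(x)^{-1}$ are the roots of $z^2-z+x=0$, rewrite the claimed equation as $y^{r}+(1-y)^{r}=-P(x)$ (after clearing denominators and dividing by $F$, which is legitimate since $F(0)=1$), and recognize the right-hand side as the Girard--Waring power-sum expression $\sum_k(-1)^k\frac{r}{r-k}\binom{r-k}{k}e_2^k e_1^{r-2k}$ with $e_1=1$, $e_2=x$. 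The coefficient match does work: for $r=2t$ the Waring coefficients $\frac{2t}{2t-k}\binom{2t-k}{k}$ coincide with the stated ones directly, and for $r=2t+1$ the identity $\frac{1}{n-k}\binom{n-k}{k}=\frac{1}{n-2k}\binom{n-k-1}{k}$ converts $\frac{2t+1}{2t+1-k}\binom{2t+1-k}{k}$ into $\frac{2t+1}{2t-2k+1}\binom{2t-k}{k}$ as required. Your approach buys a shorter, more conceptual proof that replaces the computer-assisted telescoping step with a single classical formula (itself provable from the two-term recurrence $S_n=S_{n-1}-xS_{n-2}$); the paper's approach is more mechanical but self-contained and generalizes routinely to the companion identity needed for $G(x,r)$, where your symmetric-function reading would need the analogous Lucas-type formula for $u^n-v^n$ divided by $u-v$.
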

\begin{proof}
We find it sufficient to prove the following Lemma \ref{l-func-F(x,r)}. We only prove the odd case formula
\eqref{e-F(x,r)-odd}. The even case formula \eqref{e-F(x,r)-even} is similar.

By setting $y=-C(x)$ in \eqref{e-(y+1)-odd} below, we obtain
\begin{align*}
  -(C(x)-1)^{2t+1}-1&= -\sum_{i=0}^t (-1)^i \frac{2t+1}{2t-2i+1}\binom{2t-i}{i}  (C(x)-1)^i C(x)^{2t-2i+1},\\
  (xC(x)^2)^{2t+1}+1&= \sum_{i=0}^t (-1)^i \frac{2t+1}{2t-2i+1}\binom{2t-i}{i}  (xC(x)^2)^i C(x)^{2t-2i+1},\\
  x^{2t+1} F(x,2t+1)^2 +1&= \sum_{i=0}^t (-1)^i \frac{2t+1}{2t-2i+1}\binom{2t-i}{i}  x^i F(x,2t+1).
\end{align*}
 This is clearly equivalent to \eqref{e-F(x,r)-odd}.
\end{proof}

\begin{lem} \label{l-func-F(x,r)}
  For odd positive integer $r=2t+1$, we have
  \begin{align}\label{e-(y+1)-odd}
    (y+1)^{2t+1}-1= \sum_{i=0}^t  \frac{2t+1}{2t-2i+1}\binom{2t-i}{i}  (y+1)^i y^{2t-2i+1}.
  \end{align}
  For even positive integer $r=2t$, we have
  \begin{align}\label{e-(y+1)-even}
    (y+1)^{2t}+1= \sum_{i=0}^{t}\frac{2t}{2t-i}\binom{2t -i}{i} (y+1)^i y^{2t-2i} .
  \end{align}
\end{lem}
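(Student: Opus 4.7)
My plan is to derive both identities simultaneously as instances of the classical Waring (Newton--Girard) formula for the power sum of two algebraic quantities. Set $\alpha := y+1$ and $\beta := -1$; these are roots of $z^2 - yz - (y+1) = 0$ with elementary symmetric functions $s := \alpha+\beta = y$ and $p := \alpha\beta = -(y+1)$. Then for every nonnegative integer $n$,
\begin{equation*}
   \alpha^n + \beta^n = (y+1)^n + (-1)^n,
\end{equation*}
which specializes to $(y+1)^{2t+1}-1$ when $n = 2t+1$ and to $(y+1)^{2t}+1$ when $n = 2t$. These are precisely the left-hand sides of \eqref{e-(y+1)-odd} and \eqref{e-(y+1)-even}.

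Next I would invoke the Waring expansion
\begin{equation*}
   \alpha^n + \beta^n \;=\; \sum_{k=0}^{\lfloor n/2 \rfloor} \frac{(-1)^k n}{n-k}\binom{n-k}{k}\, s^{\,n-2k}\, p^{\,k},
\end{equation*}
which is standard but also follows at once from the recurrence $\alpha^n+\beta^n = s(\alpha^{n-1}+\beta^{n-1}) - p(\alpha^{n-2}+\beta^{n-2})$ by induction on $n$. Plugging in $s=y$ and $p=-(y+1)$ cancels the alternating sign, since $(-1)^k p^k = (y+1)^k$, and yields
\begin{equation*}
   (y+1)^n + (-1)^n \;=\; \sum_{k=0}^{\lfloor n/2 \rfloor} \frac{n}{n-k}\binom{n-k}{k}\, y^{n-2k}(y+1)^k.
\end{equation*}

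It then remains to reconcile this with the exact form of the coefficients stated in the lemma. For even $n=2t$ the coefficient $\tfrac{n}{n-k}\binom{n-k}{k} = \tfrac{2t}{2t-k}\binom{2t-k}{k}$ matches \eqref{e-(y+1)-even} term by term. For odd $n=2t+1$ I would use the elementary rewriting
\begin{equation*}
   \frac{2t+1}{2t+1-k}\binom{2t+1-k}{k}
   \;=\; \frac{2t+1}{2t-2k+1}\binom{2t-k}{k},
\end{equation*}
which is immediate from $\binom{2t+1-k}{k} = \tfrac{2t+1-k}{2t-2k+1}\binom{2t-k}{k}$. This converts the Waring form to the form in \eqref{e-(y+1)-odd}. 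The argument is essentially mechanical; the only place that needs care is tracking signs and verifying the equivalence of the two forms of the coefficient, but no genuine obstacle arises.
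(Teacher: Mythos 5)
Your proof is correct, but it follows a genuinely different route from the paper's. The paper substitutes $y\to y-1$, expands by the binomial theorem, equates coefficients of $y^m$, and reduces the lemma to the vanishing of a hypergeometric sum $Z(t,m)$, which it then establishes by a recurrence found with Zeilberger's creative telescoping (the EKHAD package) together with induction on $t$. You instead recognize both left-hand sides as the power sum $\alpha^n+\beta^n$ with $\alpha=y+1$, $\beta=-1$, whose elementary symmetric functions are $s=y$ and $p=-(y+1)$, and invoke the classical Waring expansion
\[
  \alpha^n+\beta^n=\sum_{k=0}^{\lfloor n/2\rfloor}\frac{(-1)^k n}{n-k}\binom{n-k}{k}s^{n-2k}p^k ,
\]
after which the sign $(-1)^k$ cancels against $p^k=(-(y+1))^k$ and the coefficient rewriting $\frac{2t+1}{2t+1-k}\binom{2t+1-k}{k}=\frac{2t+1}{2t-2k+1}\binom{2t-k}{k}$ (which I checked is correct) recovers the stated odd-case form; the even case matches verbatim. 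Your argument is uniform in the parity of $r$, avoids coefficient extraction and computer algebra entirely, and is arguably the ``right'' explanation of why the coefficients $\frac{n}{n-k}\binom{n-k}{k}$ appear. The only points worth making explicit if you write this up are the base cases $n=1,2$ of the induction proving the Waring expansion from $p_n=sp_{n-1}-pp_{n-2}$ (since $p_0=2$, not $1$), and the boundary term $k=n/2$ in the even case, where the coefficient equals $2$ and the inductive step relies on $\binom{n-1-k}{k}=0$; both are routine. The paper's approach, by contrast, generalizes mechanically to situations where no closed power-sum interpretation is available, which is presumably why the authors chose it, but for this particular lemma your proof is shorter and more conceptual.
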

\begin{rema}
  The coefficients $\frac{2t+1}{2t-2i+1}\binom{2t-i}{i}$ are positive integers, since it is easy to verify that
  $$ \frac{2t+1}{2t-2i+1}\binom{2t-i}{i} =\binom{2t-i}{i}+2\binom{2t-i}{i-1}.$$
\end{rema}

\begin{proof}[Proof of Lemma \ref{l-func-F(x,r)}]
By making the substitution $y\to y-1$, \eqref{e-(y+1)-odd} becomes
\begin{align*}
   y^{2t+1}-1&= \sum_{i=0}^t  \frac{2t+1}{2t-2i+1}\binom{2t-i}{i}  y^i (y-1)^{2t-2i+1}\\
   &= \sum_{i=0}^t  \frac{2t+1}{2t-2i+1}\binom{2t-i}{i}  y^i \sum_{j=0}^{2t-2i+1} (-1)^{2t-2i+1-j} \binom{2t-2i+1}{j} y^j \\
   &= \sum_{m=0}^{2t+1} y^m \sum_{i+j=m, 0\le i\le t, 0\le j \le 2t+1-2i} (-1)^{j-1} \frac{2t+1}{2t-2i+1}\binom{2t-i}{i} \binom{2t-2i+1}{j} \\
   &= \sum_{m=0}^{2t+1} y^m \sum_{0\le i\le \min(t,m,2t+1-m)} (-1)^{m-i-1} \frac{2t+1}{2t-2i+1}\binom{2t-i}{i} \binom{2t-2i+1}{j}.
\end{align*}
By equating coefficients, we need to show that
\begin{align}
  \label{e-m-hyper}
Z(t,m)&=0, \quad \text{ for } 1\le m \le 2t\quad  \text{ and } Z(t,0)=-1, \ Z(t,2t+1)=1, \text{ where }\\
Z(t,m)&=\sum_{0\le i\le \min(t,m,2t+1-m)} (-1)^{m-i-1} \frac{2t+1}{2t-2i+1}\binom{2t-i}{i} \binom{2t-2i+1}{m-i}.
\end{align}

Equation \eqref{e-m-hyper} is a hypergeometric sum identity. By using the EKHAD Maple package of Zeilberger's creative telescoping \cite{Zeil}, we find
\begin{gather}
  \left( 2\,t+1 \right) \! \left( m-2\,t-3 \right)\! Z(t+1,m) \!-\!
 \left( {m}^{2}\!-2\,mt\!-3\,m+2\,t+3 \right) \! \left( m-2\,t-1 \right)\! Z(t,m)=0.\label{Zt,m}
\end{gather}
Now we are ready to prove \eqref{e-m-hyper} by induction on $t$.

The boundary case are easy to verify:
\begin{align*}
  Z(t,0)& \xlongequal {i=0, m=0}(-1)^{-1} \frac{2t+1}{2t+1}\binom{2t}{0} \binom{2t+1}{0}=-1, \\
  Z(t,2t+1)&\xlongequal{i=0, m=2t+1}(-1)^{2t} \frac{2t+1}{2t+1}\binom{2t}{2t+1} \binom{2t+1}{0}=1.
  \end{align*}
The base case when $t=1$ is easy, so we omit it. %Obviously, the following results holds true.
%  \begin{align}
%  Z(1,1)=& (-1)^{1-0-1}\binom{2}{0} \binom{3}{1}+ (-1)^{1-1-1} 3\,\binom{1}{1} \binom{1}{0}=0,\label{Z1,1}\\
%  Z(1,2)=& (-1)^{2-0-1}\binom{2}{0} \binom{3}{2}+ (-1)^{2-1-1} 3\,\binom{1}{1} \binom{1}{1}=0.\label{Z1,2}
%  \end{align}
Assume \eqref{e-m-hyper} holds for $t$ by the induction hypothesis, i.e., $Z(t,m)=0$ for $1\leq m\leq 2t$. We need to show that
$Z(t+1,m)=0$ for $1\leq m\leq 2t+2$.

The case $1\leq m \leq 2t$ follows from \eqref{Zt,m} and $Z(t,m)=0$. The case $m=2t+1$ directly follows from \eqref{Zt,m}. The case $m=2t+2$ follows by direct computation:
\begin{gather*}
   Z(t+1,2t+2)= (-1)^{2t+1} \frac{2t+3}{2t+3}\binom{2t+2}{0} \binom{2t+3}{2t+2}+ (-1)^{2t+2} \frac{2t+3}{2t+1}\binom{2t+1}{1} \binom{2t+1}{2t+1}=0.
\end{gather*}
This completes the proof.
\end{proof}
\subsection{Functional equations of $G(x,r)=\frac{C(x)^r}{1-2xC(x)}$}\label{g-e-proof}
We have the following result.
\begin{prop}\label{p-func-G(x,r)}
Let $G(x,r)=\frac{C(x)^r}{1-2xC(x)}=\frac{C(x)^r}{\sqrt{1-4x}}$. For odd integer $r=2t+1$, we have
\begin{align}\label{e-G(x,r)-odd}
   G(x,2t+1)=\frac{1}{1-4x}\cdot\frac1{{x}^{2t +1} G(x,r)+\sum_{i=0}^{t }(-1)^{i}\binom{2t -i}{i}x^i}.
\end{align}
For even integer $r=2t$, we have
 \begin{align}\label{e-G(x,r)-even}
    G(x,2t)=\frac{1}{1-4x}\cdot\frac1{{x}^{2t} G(x,r)+\sum_{i=0}^{t-1 }(-1)^{i}\binom{2t -i-1}{i}x^i}.
\end{align}
\end{prop}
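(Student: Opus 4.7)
The plan is to mimic the strategy used to prove Proposition \ref{p-func-F(x,r)}: clear denominators, use the defining relation for $C(x)$ to eliminate $G$, and reduce both functional equations to a single classical polynomial identity.

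The key algebraic fact I would use first is $(1-2xC(x))^2 = 1-4x$, which follows directly from $xC(x)^2 = C(x)-1$ (so $1-2xC = (2-C)/C$ and $1-4x = (C-2)^2/C^2$). Consequently
$$ (1-4x)\,G(x,r)^2 = C(x)^{2r}, \qquad (1-4x)\,G(x,r) = (1-2xC(x))\,C(x)^{r}. $$
Clearing denominators in \eqref{e-G(x,r)-odd} and substituting these two relations, the odd-case functional equation becomes
$$ x^{2t+1}C^{4t+2} + (1-2xC)\,C^{2t+1}\sum_{i=0}^{t}(-1)^{i}\binom{2t-i}{i}x^{i} = 1. $$

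Next, I would make the substitution $y = C-1 = xC^{2}$, so that $C = y+1$, $x^{i} = y^{i}/(y+1)^{2i}$, and $1-2xC = (1-y)/(y+1)$. After multiplying through by $(y+1)^{2t+1}/C^{2t+1}$, the odd case collapses to
$$ y^{2t+1} + (1-y)\sum_{i=0}^{t}(-1)^{i}\binom{2t-i}{i}y^{i}(y+1)^{2t-2i} = 1, $$
and using $1-y^{2t+1} = (1-y)\sum_{k=0}^{2t}y^{k}$ this is equivalent to
$$ \sum_{k=0}^{2t}y^{k} = \sum_{i=0}^{t}(-1)^{i}\binom{2t-i}{i}y^{i}(y+1)^{2t-2i}. \qquad (\star) $$
An entirely analogous manipulation of \eqref{e-G(x,r)-even} (with the factor $C^{2t-1}$ replacing $C^{2t+1}$) reduces the even case to exactly $(\star)$ with $2t$ replaced by $2t-1$. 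Hence it suffices to prove, for every integer $n\geq 0$,
$$ \sum_{k=0}^{n}y^{k} = \sum_{i=0}^{\lfloor n/2\rfloor}(-1)^{i}\binom{n-i}{i}y^{i}(y+1)^{n-2i}. \qquad (\diamond) $$

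Identity $(\diamond)$ is the classical Lucas/Chebyshev expansion
$$ \frac{\alpha^{n+1}-\beta^{n+1}}{\alpha-\beta} = \sum_{i=0}^{\lfloor n/2\rfloor}(-1)^{i}\binom{n-i}{i}(\alpha\beta)^{i}(\alpha+\beta)^{n-2i}, $$
specialized at $\alpha = y,\ \beta = 1$ (so $\alpha\beta = y$, $\alpha+\beta = y+1$, and the left side equals $(y^{n+1}-1)/(y-1) = \sum_{k=0}^{n}y^{k}$). If I prefer a self-contained argument, $(\diamond)$ follows by a short induction on $n$ using the two-term recursion $\sum_{k=0}^{n}y^{k} = (y+1)\sum_{k=0}^{n-1}y^{k} - y\sum_{k=0}^{n-2}y^{k}$ together with Pascal's rule $\binom{n-i}{i} = \binom{n-1-i}{i} + \binom{n-2-i}{i-1}$; this mirrors the Zeilberger-style induction used for Lemma \ref{l-func-F(x,r)}.

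The hard part is really the first reduction, specifically spotting that $(1-2xC)^{2} = 1-4x$ makes the two seemingly different denominators $1-2xC(x)$ and $\sqrt{1-4x}$ compatible, and then choosing $y = C-1$ so that $x^{i}C^{2t-2i+\text{const}}$ collapses to pure $y$-polynomials. Once the problem is in the form $(\diamond)$, everything else is bookkeeping, since $(\diamond)$ is a well-known identity; in particular, no new combinatorial lemma beyond a Chebyshev-type one is required.
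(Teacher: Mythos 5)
Your proof is correct, and although it shares the paper's overall skeleton --- clear denominators, use $xC^2=C-1$ to convert the functional equation into a polynomial identity in a single variable, then prove that identity --- it diverges at the key lemma in a way that is genuinely different and simpler. The paper isolates the polynomial identity as Lemma \ref{l-func-G(x,r)}, stated with the substitution $y=-C$ rather than your $y=C-1$ (the two identities are related by $y\mapsto -1-y$: after that change of variable and division by $1-y$, \eqref{yG-odd} becomes exactly your $(\star)$), and proves it by expanding, equating coefficients, and running Zeilberger's creative telescoping on the hypergeometric sum $Z(t,m)$ of \eqref{Zm,t-G}, with the even case handled by a separate, omitted argument. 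You instead recognize the identity as the classical expansion of $h_n(\alpha,\beta)=\sum_{k=0}^n\alpha^k\beta^{n-k}$ in the elementary symmetric functions, specialized at $(\alpha,\beta)=(y,1)$; this treats $n=2t$ and $n=2t-1$ uniformly, so both parities of $r$ reduce to the single identity $(\diamond)$, and it admits a two-line proof via the recursion $S_n=e_1S_{n-1}-e_2S_{n-2}$. Your route buys the elimination of the computer-algebra step and a unified odd/even treatment; the paper's route buys a mechanical method that does not require spotting the classical identity and that runs in parallel with its proof of Proposition \ref{p-func-F(x,r)}. Two trivial slips to repair: in the even case the relevant relation is $(1-4x)G(x,2t)=(1-2xC)C^{2t}$, so the factor is $C^{2t}$ rather than $C^{2t-1}$ (the exponent $2t-1-2i$ of $y+1$ appears only after absorbing the $(y+1)^{-1}$ coming from $1-2xC=(1-y)/(1+y)$); and Pascal's rule in your induction should read $\binom{n-i}{i}=\binom{n-1-i}{i}+\binom{n-1-i}{i-1}$, the second term arising from the index shift $i\mapsto i-1$ in $e_2S_{n-2}$. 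Neither affects the validity of the argument.
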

\begin{proof}
The proof is similar to that of Proposition \ref{p-func-F(x,r)}. We  also only prove the odd case formula \eqref{e-G(x,r)-odd}.
%The even case formula \eqref{e-G(x,r)-even} is similar.

We find it sufficient to prove the following Lemma \ref{l-func-G(x,r)}.

By setting $y=-C(x)$ in \eqref{yG-odd}, we obtain
\begin{align*}
 (1-C)^{2t+1}+1=&\sum_{i=0}^t\binom{2t-i}{i}(1-C)^iC^{2t+1-2i}+2\sum_{i=0}^t\binom{2t-i}{i}(1-C)^{i+1}C^{2t-2i},\\
  \!\!\!-(xC^2)^{2t+1}+1=&\sum_{i=0}^t\binom{2t-i}{i}(-1)^i(xC^2)^iC^{2t+1-2i}+2\sum_{i=0}^t\binom{2t-i}{i}(-1)^{i+1}(xC^2)^{i+1}C^{2t-2i},\\
  1-x^{2t+1}C^{4t+2}=&\sum_{i=0}^t\binom{2t-i}{i}(-1)^i(xC^2)^iC^{2t+1-2i}+2\sum_{i=0}^t\binom{2t-i}{i}(-1)^{i+1}(xC^2)^{i+1}C^{2t-2i},\\
 =&\sum_{i=0}^t\binom{2t-i}{i}(-1)^ix^iC^{2t+1}-2\sum_{i=0}^t\binom{2t-i}{i}(-1)^{i}x^{i+1}C^{2t+2},\\
  =&\sum_{i=0}^t(-1)^i\binom{2t-i}{i}x^iC^{2t+1}(1-2xC).
\end{align*}
By dividing both sides of the equation by $(1-4x)$, we obtain:
\begin{gather*}
 \frac1{1-4x} -x^{2t+1}\left(\frac{C^{2t+1}}{\sqrt{1-4x}}\right)^2=\sum_{i=0}^t(-1)^i\binom{2t-i}{i}x^i\frac{C^{2t+1}}{\sqrt{1-4x}},\\
\Leftrightarrow\frac1{1-4x}  -x^{2t+1}G(x,2t+1)^2=\sum_{i=0}^t(-1)^i\binom{2t-i}{i}x^iG(x,2t+1).
\end{gather*}
This is clearly equivalent to \eqref{e-G(x,r)-odd}.
\end{proof}
\begin{lem}\label{l-func-G(x,r)}
  For odd positive integer $r=2t+1$, we have
  \begin{gather}
   (y+1)^{2t+1}+1=-\sum_{i=0}^t\binom{2t-i}{i}(1+y)^iy^{2t+1-2i}+2\sum_{i=0}^t\binom{2t-i}{i}(1+y)^{i+1}y^{2t-2i}.\label{yG-odd}
  \end{gather}
   For even positive integer $r=2t$, we have
  \begin{gather}
   (y+1)^{2t}-1=-\sum_{i=0}^{t-1}\binom{2t-i-1}{i}(1+y)^iy^{2t-2i}+2\sum_{i=0}^{t-1}\binom{2t-i-1}{i}(1+y)^{i+1}y^{2t-2i-1}.\label{yG-even}
  \end{gather}
\end{lem}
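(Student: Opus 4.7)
The plan is to reduce both identities in Lemma \ref{l-func-G(x,r)} to the Binet closed form of a Fibonacci-type polynomial. The crucial observation is that the two sums on each right-hand side share common factors, which I will exploit first. For the odd identity \eqref{yG-odd}, I combine the two sums term-by-term: the $i$th contribution is
$$(1+y)^{i}\,y^{2t-2i}\bigl[-y+2(1+y)\bigr]=(y+2)(1+y)^{i}\,y^{2t-2i},$$
so the right-hand side equals $(y+2)\,S_{2t+1}(y)$, where $S_{2t+1}(y):=\sum_{i=0}^{t}\binom{2t-i}{i}(1+y)^{i}y^{2t-2i}$. The same bracket simplification turns the right-hand side of \eqref{yG-even} into $(y+2)\,S_{2t}(y)$ with $S_{2t}(y):=\sum_{i=0}^{t-1}\binom{2t-1-i}{i}(1+y)^{i}y^{2t-1-2i}$.

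Next I would recognise $S_n(y)$ as the Fibonacci-type polynomial $F_n(1+y,y)$, where $F_n(a,b)$ is defined by $F_0=0$, $F_1=1$, $F_{n+1}=bF_n+aF_{n-1}$, and is known to expand as $F_{n+1}(a,b)=\sum_{i\ge 0}\binom{n-i}{i}a^{i}b^{n-2i}$. With $a=1+y$ and $b=y$, the characteristic polynomial factors cleanly:
$$z^{2}-yz-(1+y)=(z+1)\bigl(z-(1+y)\bigr),$$
since $z=-1$ is an obvious root. The Binet formula then gives
$$F_n(1+y,y)=\frac{(1+y)^{n}-(-1)^{n}}{y+2}.$$
Specialising to $n=2t+1$ and $n=2t$, multiplying by $(y+2)$, and comparing with the LHS of each identity recovers $(y+1)^{2t+1}+1$ and $(y+1)^{2t}-1$ respectively, finishing both cases at once.

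The main (mild) obstacle is spotting the bracket identity $-y+2(1+y)=y+2$ that exposes the Fibonacci structure; once this is seen, the factorisation $z^{2}-yz-(1+y)=(z+1)(z-(1+y))$ is immediate and the result reduces to an invocation of the Binet formula. As a fallback, if one wishes to avoid naming Fibonacci polynomials, the same argument can be repackaged as a direct induction on $t$: verify the two base cases, then use the three-term recurrence $S_{n+1}(y)=yS_{n}(y)+(1+y)S_{n-1}(y)$, which follows from the Pascal-type relation $\binom{n-i}{i}=\binom{n-1-i}{i}+\binom{n-1-i}{i-1}$, to pass from $t$ to $t+1$. Either path is short, and both completely bypass the creative-telescoping computation used in Lemma \ref{l-func-F(x,r)}.
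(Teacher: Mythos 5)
Your proof is correct, and it takes a genuinely different and more elementary route than the paper. The paper substitutes $y\to y-1$, expands both sides, equates coefficients to reduce each identity to a hypergeometric sum $Z(t,m)=0$, and then invokes Zeilberger's creative telescoping (the EKHAD package) to produce a two-term recurrence \eqref{Zm,t-G} in $t$ on which it inducts — mirroring its treatment of Lemma \ref{l-func-F(x,r)}. You instead notice that the two sums on each right-hand side combine termwise via $-y+2(1+y)=y+2$, so that the right-hand side is $(y+2)F_n(1+y,y)$ for the Fibonacci-type polynomial $F_n(a,b)$ with $F_{n+1}=bF_n+aF_{n-1}$, whose characteristic polynomial $z^2-yz-(1+y)=(z+1)\bigl(z-(1+y)\bigr)$ factors over $\mathbb{Z}[y]$; Binet then gives $(y+2)F_n(1+y,y)=(y+1)^n-(-1)^n$, which is exactly the left-hand side for $n=2t+1$ and $n=2t$. (The roots $-1$ and $y+1$ are distinct away from $y=-2$, which suffices for a polynomial identity; your inductive fallback via the recurrence $S_{n+1}=yS_n+(1+y)S_{n-1}$ avoids even that caveat.) Your argument is shorter, computer-free, and explains structurally why $(y+1)^n\mp 1$ appears, whereas the paper's method is mechanical and uniform with its proof of the companion lemma for $F(x,r)$ — where the coefficients $\tfrac{r}{r-i}\binom{r-i}{i}$ are Lucas-type rather than Fibonacci-type, so a parallel Binet argument with $\alpha^n+\beta^n$ would likely work there too.
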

\begin{proof}By making the substitution $y\rightarrow y-1$, \eqref{yG-odd} becomes
\begin{align*}
  y^{2t+1}+1&=-\sum_{i=0}^t\binom{2t-i}{i}y^i(y-1)^{2t+1-2i}+2\sum_{i=0}^t\binom{2t-i}{i}y^{i+1}(y-1)^{2t-2i}\\
  &=\sum_{i=0}^t\!\!\binom{2t-i}{i}y^i\!\!\sum_{j=0}^{2t+1-2i}\!\!(-1)^j\binom{2t+1-2i}{j} y^j\!+\!2\sum_{i=0}^t\!\!\binom{2t-i}{i}y^{i+1}\!\!\sum_{j=0}^{2t-2i}\!\!(-1)^j\!\binom{2t-2i}{j}y^j\\
  &=\sum_{m=0}^{2t+1}y^m\!\!\sum_{i+j=m,0\leq i\leq t,0\leq j\leq 2t+1-2i}\!\! \left( -1 \right) ^{m-i}{2\,t-i\choose i} \left( {2\,t+1-2\,i\choose
j}-2\,{2\,t-2\,i\choose j} \right)\\
 &=\sum_{m=0}^{2t+1}y^m\!\!\sum_{0\leq i\leq min(t,m, 2t+1-m)}\!\! \left( -1 \right) ^{m-i}{2\,t-i\choose i} \left( {2\,t+1-2\,i\choose
m-i}-2\,{2\,t-2\,i\choose m-i-1} \right).
\end{align*}
By equating coefficients, we need to show that
\begin{align}
 Z(t,m)&=0, \quad \text{for}\quad 1\leq m\leq 2t \quad Z(t,0)=1,\quad Z(t,2t+1)=1,\quad\text{where},\nonumber \\
  Z(t,m)&=\sum_{0\leq i\leq min(t,m, 2t+1-m)} \left( -1 \right) ^{m-i}{2\,t-i\choose i} \left( {2\,t+1-2\,i\choose
m-i}-2\,{2\,t-2\,i\choose m-i-1} \right).\label{Zm,t-G}
\end{align}
Equation \eqref{Zm,t-G} is a hypergeometric sum identity. By using the EKHAD Maple package of Zeilber's creative telescoping, we find
$$\left( 2\,m-2\,t-1 \right)\left( m-2\,t-3 \right) Z(t+1,m)- \left( {m}^{2}-2\,tm-m-2\,t-3 \right)  \left( m-2\,t-1 \right)Z(t,m)=0.$$
The proof is similar to that of Lemma \ref{l-func-F(x,r)}, and we omit it.
\end{proof}
%\medskip
%\noindent
%\textbf{Acknowledgments}: We are grateful to the anonymous referee for suggestions on improving the text and correcting misprints.

\end{document}